\documentclass[letterpaper,12pt,reqno]{amsart}
\setlength{\textwidth}{6in} \setlength{\textheight}{9in}
\setlength{\hoffset}{-.5in} \setlength{\voffset}{-.5in}
\setlength{\footskip}{20pt}
\usepackage{amssymb}

\usepackage{stmaryrd}
\usepackage{amsmath,color,amscd}
\usepackage{mathrsfs}
\usepackage{eucal}
\usepackage{amsfonts}
\usepackage[all]{xy}
\usepackage{pstricks}
\usepackage{pstricks,pst-node}

\DeclareFontFamily{OT1}{pzc}{}
\DeclareFontShape{OT1}{pzc}{m}{it}{<-> [1.15] rpzcmi}{}
\DeclareMathAlphabet{\mathzc}{OT1}{pzc}{m}{it}

\newcommand{\Z}{\mathbb{Z}}
\newcommand{\0}{\bar 0}

\def\la{{\lambda}}
\def\al{{\alpha}}

\def\ttM{{\mathtt M}}
\def\ttC{{\mathtt C}}
\def\tte{{\mathtt e}}
\def\ttf{{\mathtt f}}
\def\ttk{{\mathtt k}}

\def\sfe{{\textsc{e}}}
\def\sff{{\textsc{f}}}
\def\sfk{{\textsc{k}}}
\def\sfF{{\mathsf F}}

\newtheorem{theorem}{Theorem}[section]
\newtheorem{lemma}[theorem]{Lemma}
\newtheorem{proposition}[theorem]{Proposition}
\newtheorem{corollary}[theorem]{Corollary}

\theoremstyle{definition}
\newtheorem{definition}[theorem]{Definition}

\newtheorem{remark}[theorem]{Remark}
\newtheorem{remarks}[theorem]{Remarks}

\numberwithin{equation}{theorem}

\def\xy{{{\text{\tiny $[$}}xy{\text{\tiny$]$}}}}
\def\yx{{{\text{\tiny $[$}}yx{\text{\tiny$]$}}}}



\def\sgn{\operatorname{sgn}}

\def\KK{{\mathzc K\kern0pt}}
\def\vv{{\mathzc v\kern.5pt}}

\def\aq{/\kern-2pt/}

\def\La{{\Lambda}}
\def\End{{\text{\rm End}}}

\def\span{{\text{\rm span}}}
\def\wt{{\text{\rm wt}}}

\def\up{{\boldsymbol{\upsilon}}}

\def\sZ{{\mathcal Z}}
\def\sD{{\mathcal D}}

\def\la{{\lambda}}
\def\al{{\alpha}}

\def\ttM{{\mathtt M}}

\def\sfF{{\mathsf F}}

\def\fkf{{\mathfrak{f}}}

\def\sfs{{\mathsf{s}}}

\def\fkm{{\mathfrak m}}

\def\fS{{\mathfrak S}}

\def\sM{{\mathcal M}}
\def\xy{{{\text{\tiny $[$}}xy{\text{\tiny$]$}}}}
\def\yx{{{\text{\tiny $[$}}yx{\text{\tiny$]$}}}}
\def\ro{{\rm ro}}
\def\co{{\rm co}}

\def\pp{{+\!+}}

\begin{document}

\baselineskip16pt
\def\hei{\relax}

\title[Polynomial super representations]{Polynomial super representations of $U_{q}^{\text{\rm res}}(\mathfrak{gl}_{m|n})$ at roots of unity}
\author{Jie Du, Yanan Lin  and Zhongguo Zhou}
\address{J. D., School of Mathematics and Statistics,
University of New South Wales, Sydney NSW 2052, Australia}
\email{j.du@unsw.edu.au}
\address{Y. L., School of Mathematical Sciences, Xiamen University, Xiamen 361005, China}
\email{ynlin@xmu.edu.cn}
\address{Z. Z., College of Science, Hohai University, Nanjing, China}
\email{zhgzhou@hhu.edu.cn}



\subjclass[2010]{17B35, 17B37, 17B70, 20C08, 20G43}

\begin{abstract}
As a homomorphic image of the hyperalgebra $U_{q,R}(m|n)$ associated with the quantum linear supergroup $U_\up(\mathfrak{gl}_{m|n})$, we first give a presentation for the $q$-Schur superalgebra $S_{q,R}(m|n,r)$ over a commutative ring $R$. We then develop a criterion for polynomial supermodules of $U_{q,F}(m|n)$ over a filed $F$ and use this to determine a classification of polynomial irreducible supermodules at roots of unity. This also gives classifications of irreducible $S_{q,F}(m|n,r)$-supermodules for all $r$. As an application when $m=n\geq r$ and motivated by the beautiful work \cite{bru} in the classical (non-quantum) case, we provide a new proof for the Mullineux conjecture related to the irreducible modules over the Hecke algebra $H_{q^2,F}(\fS_r)$; see \cite{Br} for a proof without using the super theory.
\end{abstract}

\thanks{
The work was supported by a 2017 UNSW Science Goldstar Grant, the Natural Science Foundation of 
China ($\#$11471269), and Jiangsu Provincial Department of Education. The authors would like to thank UNSW and Xiamen University for their hospitality 
during the writing of the paper.}

 \maketitle


%
%
%
\section{Introduction}
The Mullineux conjecture \cite{M} refers to a combinatorial algorithmic map $\la\mapsto\ttM(\la)$ on $p$-regular partitions such that if $D^\la$ is an irreducible $p$-modular representation of the symmetric group $\fS_r$ then $D^{\ttM(\la)}\cong D^\la\otimes\text{sgn}$, where sgn is the sign representation.  Building on his work on modular branching rules, Kleshchev \cite{K} developed an alternative algorithm to describe the partition associated with $D^\la\otimes\text{sgn}$. With some technical combinatorics, Ford and Kleshchev \cite{FK} then proved that Kleshchev's algorithm is equivalent to the Mullineux map, and thereby, proved the Mullineux conjecture.  See \cite{BO} for a shorter proof for the equivalence. The Hecke algebra version of this conjecture was proved by Brundan \cite{Br}. Like the $p$-modular case, quantum branching rules play a decisive role in the proof. 

In 2003, Brundan and Kujawa \cite{bru} discovered an excellent new proof for the original conjecture without using branching rules. Instead, they used representations of the general linear Lie supergroup. This proof involves a different algorithm introduced by Xu \cite{xu} for the Mullineux map and the Serganova algorithm for computing the highest weights of $w$-twisted irreducible supermodules.
The latter relies on the highest weight theory developed in \cite[\S4]{bru} associated with a representative $w$ of  an $\fS_m\!\times\fS_n$-coset. However, this theory does not seem to have a quantum analogue. Thus, generalising the work in \cite{bru} to the quantum case requires some new ideas. 

In this paper, we will use the polynomial super representation theory of the (super) quantum hyperalgebra associated with the linear Lie superalgebra $\mathfrak{gl}_{m|n}$ to give a new proof of the quantum Mullineux conjecture. Here are the main ideas to tackle the two algorithms used in \cite{bru}. First, we directly link the map $j_l$ used in Xu's algorithm to a non-vanishing condition of certain products of Gaussian polynomials which naturally occur in root vector actions on a maximal vector; see Lemmas \ref{mull} and \ref{comp}. This results in a classification of polynomial irreducible supermodules. Second, we realise the Serganova algorithm through a sequence of root vector actions on a highest weight vector. It is worth noting that the graph automorphism $\sigma$, available only when $m=n$, and a pair of Schur functors play crucial roles in the final stage of the proof.

We organise the paper as follows. We first discuss in \S2 the Lusztig $\mathbb Z[\up,\up^{-1}]$-form $U_{\up,\sZ}({m|n})$ of the quantum supergroup
$U_{\up}(\mathfrak{gl}_{m|n})$ over $\mathbb Q(\up)$ and their base change $U_{q,R}({m|n})$ to any commutative ring $R$ via $\up\mapsto q$, the quantum (super) hyperalgebras. We also display the commutation formulas of root vectors which are used throughout the paper. In \S3, we introduce the $q$-Schur superalgebra $S_{q,R}(m|n,r)$ not only as an endomorphism algebra of a module of the Hecke algebra $H_{q^2,R}$ but also as a homomorphic image of $U_{q,R}({m|n})$. By working out a presentation for $S_{q,R}(m|n,r)$ in \S4, we develop a criterion which tests when a finite dimensional weight $U_{q,F}({m|n})$-supermodule is polynomial in \S5. A classification of irreducible weight $U_{q,F}({m|n})$-supermodule is also given as an extension of its nonsuper counterpart \cite{Lu}.  

In \S6, we classify all polynomial irreducible $U_{q,F}({m|n})$-supermodules (Theorem \ref{cirr}) which are indexed by the sets used in \cite{bru}. Notably, the method here is very different from those used in \cite{bru}. As a simple application, a classification of irreducible $S_{q,F}(m|n,r)$-supermodules is given in \S7. Unlike the classification given in \cite{dgw1,dgw2}, which is independent of quantum supergroups, this classification is constructive. We further investigate the structure of $q$-Schur superalgebras through a certain filtration of ideals and  Weyl supermodules. 
The last two sections are devoted to prove the quantum Mullineux conjecture. The combinatorics of the Mullineux map,  largely following \cite{bru}, and the quantum Serganova algorithm (Proposition \ref{lowe}, Theorem \ref{sigm}) are discussed in \S8. In the last section, we introduce two Schur functors and compare their images on supermodules (Proposition \ref{difh}). The conjecture is proved in Theorem \ref{QMC}.

%
%
%

Throughout the paper, we assume that $R$ is a commutative ring with 1 of characteristic $\neq 2$. Let $q\in R$ be an invertible element. From \S5 onwards, we assume that $R=F$ is a field and $q$ is a primitive $l'$th root of unity. To include the non-roots of unity case, we set $l'=\infty$ if $q$ is not a unit of unity.

For fixed non-negative integers $m,n$ with $m+n>0$ and $i\in [1,m+n]:=\{1,2,\cdots, m+n\},$
define the parity function $i\mapsto \bar i$ by
$$
\bar i= \begin{cases}
            \bar 0, & \mbox{  if  } 1\leq i\leq m;\\
            \bar 1, & \mbox{  if  } m+1\leq i\leq m+n.
           \end{cases}
$$
For the  standard basis $\{ \epsilon_1,\cdots, \epsilon_{m+n}\}$ for $\mathbb Z^{m+n}$, 
define the ``super dot product'' by
$(\epsilon_i,\epsilon_j)=(\epsilon_i,\epsilon_j)_s=(-1)^{\bar{i}}\delta_{ij},$ and
call $ \alpha_i={ \epsilon_i}-{ \epsilon_{i+1}}, i\in [1,m+n):=[1,m+n]\backslash\{m+n\}$ simple roots.
We have positive root system
   $\Phi^+=\{\alpha_{i,j}={ \epsilon_i}-{ \epsilon_{j}}\,|\, 1\leq i<j\leq m+n \}$
and negative root system $\Phi^-=-\Phi^+.$
Define  $\bar \alpha_{i,j}=\bar i+\bar j,$ and call $\alpha_{i,j}$ an even (resp. odd) root
 if $\bar \alpha_{i,j}=\bar0$ (resp., $\bar 1$). Note that  $\alpha_{m}$
is the only odd simple root.

\vspace{.3cm}
\noindent
{\bf Acknowledgement.} The first author would like to thank Jonathan Brundan for his comments made 
at the 2016 Charlottesville and 2017 Sydney conferences. The authors also thank Weiqiang Wang and Hebing Rui for several discussions.

%
%
%
\section{The quantum hyperalgebra $U_{q,R}({m|n})$}
  Let $ \mathbb Q(\up)$ be the field of rational functions in indeterminate  $\up$ and let
$$\up_a=\up^{(-1)^{\bar a}}\quad(1\leq a\leq m+n).$$
Define the super (or graded) commutator on the homogeneous elements $X,Y$ for an (associative) superalgebra by
$$
[X,Y]=[X,Y]_s=XY-(-1)^{\bar X \bar Y}YX.
$$
     The following quantum enveloping superalgebra $U_\up(\mathfrak {gl}_{m|n})$
is defined in \cite{zrb, dew}.
\begin{definition}\label{Uglmn}
The quantum enveloping superalgebra $U_\up(\mathfrak{gl}_{m|n})$ over $ \mathbb Q(\up)$
 is generated by the homogeneous elements
\[
E_{1}, \dotsc , E_{m+n-1}, F_{1}, \dotsc , F_{m+n-1}, K_{1}^{\pm 1}, \dotsc , K_{m+n}^{\pm  1},
\] 
with a $\Z_{2}$-grading given by setting $\overline{E}_{m}=\overline{F}_{m}=\bar 1$,
 $\overline{E}_{a}= \overline{F}_{a}=\0$
 for $a \neq m$, and $\overline{K^{\pm 1}_{a}}=\0$.
 These elements are subject to the following relations:
\begin{enumerate}
\item[(QG1)]
$
K_aK_b=K_bK_a,\
K_aK_a^{-1}=K_a^{-1}K_a=1;
$
\item[(QG2)]
$
K_aE_{b}=\up^{(\varepsilon_{a}, \alpha_{b})}E_{b}K_a,\
K_aF_{b}=\up^{(\varepsilon_{a}, -\alpha_{b})}F_{b}K_a;
$
\item[(QG3)]
$
[E_{a}, F_{b}]=\delta_{a,b}\frac{K_aK_{a+1}^{-1}-K_a^{-1}K_{a+1}}{\up_a-\up_a^{-1}},
$
\item[(QG4)]
$
E_{a}E_{b}=E_{b}E_{a},
\  F_{a}F_{b}=F_{b}F_{a},$ if $|a-b|>1
;$
\item[(QG5)]
For $ a\neq m$ and $|a-b|=1,$
\begin{equation}
\begin{aligned}\notag
&E_{a}^2E_{b}-(\up_a+\up_a^{-1})E_{a}E_{b}E_{a}+E_{b}E_{a}^2=0,\\
&F_{a}^2F_{b}-(\up_a+\up_a^{-1})F_{a}F_{b}F_{a}+F_{b}F_{a}^2=0;
\end{aligned}
\end{equation}
\item[(QG6)]
$E_m^2=F_m^2=[E_{m}, E_{m-1,m+2}]=[F_{m}, E_{m+2,m-1}]=0,$
where 
\begin{equation}
\begin{aligned}\notag
E_{m-1,m+2}&=E_{m-1}E_{m}E_{m+1}-\up E_{m-1}E_{m+1}E_{m} -\up^{-1} E_{m}E_{m+1}E_{m-1}+
E_{m+1}E_{m}E_{m-1},\\
E_{m+2,m-1}&=F_{m+1}F_{m}F_{m-1}-\up^{-1} F_{m}F_{m+1}F_{m-1} -\up F_{m-1}F_{m+1}F_{m}+
F_{m-1}F_{m}F_{m+1}.\\
\end{aligned}
\end{equation}
\end{enumerate}
\end{definition}
Note that, if $a=b=m$ in (QG3), then $E_mF_m+F_mE_m=\frac{K_mK_{m+1}^{-1}-K_m^{-1}K_{m+1}}{\up-\up^{-1}}$.

%
%
%

By directly checking the relations, it is clear that there is a $\mathbb Q(\up)$-algebra automorphism (of order 4); cf. \cite[Lem. 6.5(1)]{ddp}:
\begin{equation}\label{varpi}
\varpi:U_\up(\mathfrak{gl}_{m|n})\longrightarrow U_\up(\mathfrak{gl}_{m|n}),\quad
E_a\mapsto (-1)^{\overline{a}+\overline{a+1}}F_a, F_a\mapsto E_a, K_j^{\pm1}\mapsto K_j^{\mp1}, 
\end{equation} 
and a ring anti-automophism of order 2
\begin{equation}\label{Up}
\Upsilon: U_\up(\mathfrak{gl}_{m|n})\longrightarrow U_\up(\mathfrak{gl}_{m|n}),\quad
E_a\mapsto F_a, F_a\mapsto E_a, K_j^{\pm1}\mapsto K_j^{\mp1}, \up\mapsto \up^{-1}.
\end{equation} 
When $m=n$, we have the following $\mathbb Q(\up)$-algebra automorphism induced from a ``graph automorphism''
\begin{equation}\label{sigma}
\sigma:U_{\up}(\mathfrak{gl}_{n|n})\longrightarrow U_\up(\mathfrak{gl}_{n|n}),\quad E_{a}\rightarrow F_{2n-a}, \  F_{a}\rightarrow E_{2n-a},\   K_j^{\pm1}\rightarrow
{K}_{2n+1-j}^{\mp1}.
\end{equation}
%
%
%
%
%
%

We now introduce Lusztig's $\mathcal Z$-form\footnote{This form in the nonsuper case, first introduced in \cite{Lu}, is also known as the restricted integral form in \cite[\S9.3]{cp}; compare with the non-restricted form in \cite[\S9.2]{cp}. See also their respective representation theories of their specialisations in \cite[\S\S11.1-2]{cp}.} of $U_\up(\mathfrak{gl}_{m|n})$, where $\mathcal {Z}:=\mathbb Z[\up, \up^{-1}]$. Let
$[ i ]=\frac{\up^i-\up^{-i}}{\up-\up^{-1}}$ and   $[ i ]!=[1][2]\cdots [i].$
For any integers $t\in \mathbb N, s\in \mathbb Z$,  define (symmetric) Gaussian polynomials by
\begin{eqnarray*}
{s \brack t}=\frac{[s]!}{[t]![s-t]!}=\prod_{i=1}^t
\frac{{\up^{s-i+1}-\up^{-s+i-1}}}{\up^i-\up^{-i}}.
\end{eqnarray*}
Note that, by the evaluation map from $\sZ$ to $R$ via $\up\mapsto q$,  the evaluation of the polynomial $[{s\atop t}]$ at $q$ is denoted $[{s\atop t}]_{q}$. Note also that if $q_a$ is the value of $\up_a$ at $q$ then
$$\bigg[{s\atop t}\bigg]_{q_a}=\bigg[{s\atop t}\bigg]_{q}.$$
For $c\in\mathbb Z$, $t\in\mathbb N$, set ${K_i;c\brack 0}=1$ and, for $t>0$,
\begin{eqnarray}\label{Kct}
{K_i; c \brack t}={K_i; c \brack t}_i=\prod_{s=1}^t
\frac{K_i{\up_i^{c-s+1}-K_i^{-1}\up_i^{-c+s-1}}}{\up_i^s-\up_i^{-s}}.
\end{eqnarray}
Here, we sometimes use the subscript $i$ to indicate the use of $\up_i$.
Let $E_{i}^{(M)}=\frac{E_i^M}{[M]^!}$,  $F_{i}^{(M)}=\frac{F_i^M}{[M]^!}$, and ${K_i\brack t}=
{K_i; 0 \brack t}$. 
Then $U_\up(\mathfrak{gl}_{m|n})$ has the Lusztig  $\mathcal Z$-form  $U_{\up,\mathcal Z}:=U_{\up,\mathcal Z}(m|n)$. This is the $\sZ$-subsuperalgebra generated  by
$$
\bigg\{
E_{i}^{(M)}, F_{i}^{(M)}, K_j^{\pm 1}, {K_j\brack t}
\, \bigg|  \,
t,M\in \mathbb N, 1\leq i< m+n,\, 1\leq j\leq m+n
\bigg\}.
$$
It is clear to see that the automorphisms $\varpi,\Upsilon$ 
stabilise
the $\mathcal Z$-form  $ U_{\up,\mathcal Z}$. Likewise, the graph automorphism $\sigma$ 
restricts to an $\mathcal Z$-algebra automorphism
\begin{equation}\label{intsig}
\sigma:U_{\up,\mathcal Z}({n|n})\longrightarrow U_{\up,\mathcal Z}({n|n}).
\end{equation}

We need quantum root vectors to describe PBW type bases for  $U_{\up,\sZ}({m|n})$ below.
For a root $\alpha=\epsilon_a-\epsilon_b$,  define recursively the {\it root vectors} $E_\alpha=E_{a,b}$ as follows:
\begin{equation}\notag
\aligned
E_{a,a+1}&=E_{a},\quad E_{a+1,a}=F_{a} \
\mbox{ and, for  }|a-b|>1,\\
E_{a,b}&=\left\{
\begin{array}{ll}
E_{a,c}E_{c,b}-\up_c E_{c,b}E_{a,c},& \quad \mbox{ if } a>b;\\
E_{a,c}E_{c,b}-\up_c^{-1} E_{c,b}E_{a,c},& \quad \mbox{ if } a<b.
\end{array}
\right.
\endaligned
\end{equation}
Here $c$  can be any number strictly between $a$ and $b$. Note that we have
\begin{equation}\label{fab}
\Upsilon(E_{a,b})=E_{b,a},\quad \varpi(E_{a,b})=\pm(-q_a)^{f_{a,b}}E_{b,a}\text{ for some }f_{a,b}\in\mathbb N.
\end{equation}

The following three sets of commutation formulas for divided powers of root vectors 
$E_{a,b}^{(M)}:=\frac{E_{a,b}^M}{[M]^!}$ ($a\neq b$, $M\geq1$) in $U_{\up,\mathcal Z}$, are given in \cite{tk}. They continue to hold in the specialisation to an arbitrary commutative ring $R$ via $\up\mapsto q\in R$:
\begin{equation}\label{over F}
U_{q,R}=U_{q,R}(m|n) =U_{\up,\mathcal{Z}}\otimes_\mathcal{Z} R.
\end{equation}
Following \cite[\S3]{Br}, we call $U_{q,R}$ the {\it quantum (super) hyperalgebra} associated with $U_\up(\mathfrak{gl}_{m|n})$, which is also denoted by $U_q^{\text{res}}(\mathfrak{gl}_{m|n})$ in \cite[\S9.3]{cp}.
For notational simplicity, we  write $X=X\otimes 1$ for all $X\in U_{\up,\mathcal Z}$, We also 
set  $\varpi_R=\varpi\otimes \text{id}_R$, $\Upsilon_R$ and $\sigma_R$ to denote the corresponding automorphisms. 
For example, $\sigma_R:U_{q,R}(n|n)\longrightarrow U_{q,R}(n|n)$ satisfies
\begin{equation}\label{sigR}
(E_{i}^{(M)}, F_{i}^{(M)}, K_j^{\pm 1}, {K_j\brack t})\longmapsto
(F_{2n-i}^{(M)}, E_{2n-i}^{(M)}, K_{2n+1-j}^{\mp 1}, {K_{2n+1-j}^{-1}\brack t}).
\end{equation}
Note that applying $\varpi_R,\Upsilon_R$ to the commutation relations below may produces other commutation relations in $ U_{q,R}$.

\begin{proposition}\label{KE} For any $1\leq a,b,c\leq m+n$, we have, in $U_{q,R}$,
$$K_aE_{b,c}=\begin{cases} q_a^{\varepsilon_a}E_{b,c}K_a,&\text{ if } a=b\text{ or }c;\\
E_{b,c}K_a,&\text{ if } a\neq b,c,\end{cases} \quad\text{where}\quad \varepsilon_a=\begin{cases} 1,&\text{ if } a=b;\\
-1,&\text{ if } a=c.\end{cases}$$
\end{proposition}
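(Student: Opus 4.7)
The plan is to prove the commutation formula by induction on the distance $d = |b-c|$, using the recursive definition of the root vectors $E_{b,c}$.

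For the base case $d=1$, we have either $E_{b,c}=E_{b,b+1}=E_b$ (when $c=b+1$) or $E_{b,c}=E_{c+1,c}=F_c$ (when $b=c+1$). In the first case, (QG2) applied at $q$ gives $K_a E_b = q^{(\epsilon_a,\alpha_b)} E_b K_a$, and since $(\epsilon_a,\alpha_b)=(-1)^{\bar a}(\delta_{a,b}-\delta_{a,b+1})=(-1)^{\bar a}(\delta_{a,b}-\delta_{a,c})$, we obtain $q^{(\epsilon_a,\alpha_b)} = q_a^{\delta_{a,b}-\delta_{a,c}}$, which is exactly the asserted factor. The case $E_{b,c}=F_c$ is symmetric: $(\epsilon_a,-\alpha_c)=(-1)^{\bar a}(\delta_{a,c+1}-\delta_{a,c})=(-1)^{\bar a}(\delta_{a,b}-\delta_{a,c})$, yielding the same coefficient.

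For the inductive step with $d\geq 2$, choose any integer $e$ strictly between $b$ and $c$, so by the recursive definition
\[
E_{b,c} \;=\; E_{b,e}E_{e,c} - q_e^{\varepsilon} E_{e,c}E_{b,e},
\]
where $\varepsilon=\pm 1$ is determined by whether $b>c$ or $b<c$. Apply the inductive hypothesis to each of $E_{b,e}$ and $E_{e,c}$, whose distances are both strictly less than $d$. Pushing $K_a$ past $E_{b,e}$ contributes a factor $q_a^{\delta_{a,b}-\delta_{a,e}}$, and past $E_{e,c}$ a factor $q_a^{\delta_{a,e}-\delta_{a,c}}$. The scalar $q_e^{\varepsilon}$ commutes with $K_a$ freely. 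The key telescoping observation is that on \emph{both} summands the exponents add to
\[
(\delta_{a,b}-\delta_{a,e})+(\delta_{a,e}-\delta_{a,c}) \;=\; \delta_{a,b}-\delta_{a,c},
\]
independent of $e$. Hence a common factor $q_a^{\delta_{a,b}-\delta_{a,c}}$ may be pulled out from both terms, and what remains is precisely $E_{b,c}K_a$. This is the desired identity, since $\delta_{a,b}-\delta_{a,c}$ equals $+1$ if $a=b$, $-1$ if $a=c$, and $0$ otherwise.

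There is no serious obstacle; the only point requiring attention is bookkeeping, namely verifying that the \emph{same} coefficient $q_a^{\delta_{a,b}-\delta_{a,c}}$ emerges from \emph{both} summands of the commutator so that it factors through cleanly. This is guaranteed by the telescoping in the index $e$, and is the reason the formula is independent of the auxiliary choice of $e$ in the recursion (at least for the purpose of commuting with $K_a$).
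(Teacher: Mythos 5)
Your proof is correct, and since the paper states Proposition \ref{KE} without any proof, your argument fills in precisely the kind of routine verification the authors left to the reader. The approach is the natural one: the recursive definition of $E_{b,c}$ makes induction on $|b-c|$ essentially forced, and the single uniform exponent $q_a^{\delta_{a,b}-\delta_{a,c}}$ cleanly packages the three cases of the statement. Two small bookkeeping points are worth noting. First, your base-case computation correctly uses the \emph{super} dot product $(\epsilon_i,\epsilon_j)=(-1)^{\bar i}\delta_{ij}$, which is what converts $\up^{(\epsilon_a,\alpha_b)}$ into $\up_a^{\delta_{a,b}-\delta_{a,c}}$ via $\up_a=\up^{(-1)^{\bar a}}$; this is exactly where the $q_a$ (rather than $q$) in the statement comes from. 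Second, the inductive step only needs that the formula holds for \emph{some} admissible choice of intermediate index $e$, so the telescoping argument suffices without appeal to well-definedness of the root vector; the latter is an independent fact from \cite{tk} that your parenthetical remark correctly keeps separate. The whole computation takes place in $U_{\up,\sZ}$ and passes to $U_{q,R}$ by specialisation $\up\mapsto q$, which you handle implicitly and correctly.
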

\begin{proposition}[{\cite[(27)\&Proposition 3.8.1]{tk}}]\label{ppco}
Let $E_{a,b}$ and $E_{c,d}$ be two root vectors with $a<b$ and $c<d$, 
and let $M,N \geq 1.$  We then have the following commutation formulas in $U_{q,R}$.
\begin{enumerate}
\item[(0)] $E_{a,b}^2=0\;\; \text{ for all odd root }\;\; \alpha=\epsilon_a-\epsilon_b;$
\item If $b<c$ or $c<a<b<d$, then
\[
E_{a,b}^{(M)}E_{c,d}^{(N)}=
\begin{cases}
(-1)^{\overline{E}_{a,b}\overline{E}_{c,d}}E_{c,d}E_{a,b}, &\text{if } M=N=1;\\
E_{c,d}^{(N)}E_{a,b}^{(M)},  &\text{otherwise}.
\end{cases}
\]
\item If $a=c<b<d$ or $a<c<b=d$, then
\[
E_{a,b}^{(M)}E_{c,d}^{(N)}=\begin{cases}
(-1)^{\overline{E}_{a,b} \overline{E}_{c,d}}q_bE_{c,d}E_{a,b}, &\text{if } M=N=1, b=d;\\
(-1)^{\overline{E}_{a,b} \overline{E}_{c,d}}q_aE_{c,d}E_{a,b}, &\text{if } M=N=1,a=c;\\
q_b^{MN} E_{c,d}^{(N)}E_{a,b}^{(M)},&\text{otherwise}.
\end{cases}
\]
\item If $a<b=c<d$, then
\[
E_{a,b}^{(M)}E_{c,d}^{(N)}=\begin{cases}
 E_{a,d}+q_c^{-1}E_{c,d}E_{a,b}, & \text{if } M=N=1;\\
\sum_{t=0}^{\min(M,N)}q_{b}^{-(N-t)(M-t)}E_{c,d}^{(N-t)}E_{a,d}^{(t)}E_{a,b}^{(M-t)},
&\text{otherwise}.\end{cases}
\]
\item If $a<c<b<d$, then
\[
E_{a,b}^{(M)}E_{c,d}^{(N)}=\begin{cases}
 (-1)^{\overline{E}_{a,b}\overline{E}_{c,d}}E_{c,d}E_{a,b}+(q_b-q_b^{-1})E_{a,d}E_{c,b}, &\text{if } M=N=1;\\
\sum_{t=0}^{\min(M,N)}q_{b}^{\frac{t(t-1)}{2}}(q_b-q_b^{-1})^t[t]_q!E_{c,b}^{(t)}E_{c,d}^{(N-t)}E_{a,b}^{(M-t)}E_{a,d}^{(t)},
&\text{otherwise}.\end{cases}
\]
\end{enumerate}
\end{proposition}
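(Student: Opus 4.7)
The plan is to reduce each of the five identities to the base case $M=N=1$ and then extend to arbitrary divided powers by induction on $M+N$. For the base cases, I would expand using the recursive definition $E_{a,b} = E_{a,c}E_{c,b} - q_c^{\pm 1}E_{c,b}E_{a,c}$, choosing the intermediate index $c$ adaptively. This reduces each supercommutator $[E_{a,b}, E_{c,d}]_s$ to nested commutators among the Chevalley generators, which are controlled by the defining relations (QG1)--(QG6) and the $q$-Serre relations they imply. For instance, in case (4) where $a<c<b<d$, taking the pivot $c$ itself produces the cross term $(q_b-q_b^{-1})E_{a,d}E_{c,b}$ directly from the underlying rank-two quantum Serre calculation. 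Case (0), asserting $E_{a,b}^2=0$ for every odd root, follows from $E_m^2=0$ in (QG6) by choosing $c\in\{m,m+1\}$ strictly between $a$ and $b$ so that exactly one of $E_{a,c}$, $E_{c,b}$ is odd; the square then expands to a polynomial expression in a nilpotent odd generator and vanishes.

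For the inductive step, I use $[M]E_{a,b}^{(M)} = E_{a,b}E_{a,b}^{(M-1)}$ and $[N]E_{c,d}^{(N)} = E_{c,d}^{(N-1)}E_{c,d}$, valid in the generic algebra $U_{\up,\sZ}$ before specialisation, to move one factor at a time through $E_{c,d}^{(N)}$ (respectively $E_{a,b}^{(M)}$) using the already proved $M=1$ or $N=1$ version of the identity. The resulting sums are reassembled with the help of the $q$-Pascal identities ${M+1 \brack t} = \up^{M+1-t}{M \brack t} + \up^{-t}{M \brack t-1}$. All identities then descend to $U_{q,R}$ through the base change $U_{q,R}=U_{\up,\sZ}\otimes_\sZ R$ via $\up\mapsto q$, which is why the coefficients $q_b^{MN}$, $q_b^{-(N-t)(M-t)}$, and so on, make sense uniformly over any commutative ring $R$.

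The hard part will be case (4): the cross term $E_{a,d}E_{c,b}$ produced at each single-generator push itself $q$-commutes with the remaining factors via the already proved cases (2) and (3), so compounding an $M$-fold push of $E_{a,b}$ through $E_{c,d}^{(N)}$ generates a sum whose weights must collect into $q_b^{t(t-1)/2}(q_b-q_b^{-1})^t[t]_q!$. The cleanest bookkeeping is to regard left-multiplication by $E_{a,b}$ as a twisted derivation on the subalgebra generated by the four root vectors $E_{a,b}, E_{c,d}, E_{a,d}, E_{c,b}$ and to recognise the right-hand side as a $q$-Leibniz-type expansion. The super signs $(-1)^{\overline{E}_{a,b}\overline{E}_{c,d}}$ appearing only in the $M=N=1$ formulas cause no further trouble, because whenever one of $M,N$ exceeds $1$ the corresponding root must be even by part (0), and those signs then reduce to $+1$.
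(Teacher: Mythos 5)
The paper does not prove Proposition \ref{ppco}; it simply cites El Turkey--Kujawa \cite[(27)\&Prop.~3.8.1]{tk} and notes that the identities descend from $U_{\up,\mathcal{Z}}$ to $U_{q,R}$ by base change. There is therefore no internal argument to compare against. Your outline --- establish the $M=N=1$ cases by expanding one root vector through its recursive definition $E_{a,b}=E_{a,c}E_{c,b}-\up_c^{\pm1}E_{c,b}E_{a,c}$, then lift to divided powers by induction on $M+N$ over $\mathbb{Q}(\up)$ using $[M]E_{a,b}^{(M)}=E_{a,b}E_{a,b}^{(M-1)}$ and the $q$-Pascal identity, and finally specialise --- is the standard route and is almost certainly the one taken in \cite{tk}. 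The remark at the end about the super sign $(-1)^{\overline{E}_{a,b}\overline{E}_{c,d}}$ being forced to $+1$ whenever one of $M,N$ exceeds $1$ (because the corresponding root must be even by part (0)) is correct and well spotted.

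There is, however, a genuine gap in your treatment of case (0). You claim that for an odd root $\alpha=\epsilon_a-\epsilon_b$, choosing $c\in\{m,m+1\}$ makes exactly one of $E_{a,c}$, $E_{c,b}$ odd, so ``the square then expands to a polynomial expression in a nilpotent odd generator and vanishes.'' That last step does not go through as stated: squaring the two-term expression $E_{a,b}=E_{a,c}E_{c,b}-\up_c^{\pm1}E_{c,b}E_{a,c}$ produces four cross terms such as $E_{a,c}E_{c,b}E_{a,c}E_{c,b}$, and to collapse these you must first rearrange the inner $E_{c,b}E_{a,c}$ using precisely the $M=N=1$ formulas of parts (2) and (3). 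In other words, part (0) cannot be isolated; one needs a simultaneous induction on the total length $(b-a)+(d-c)$ of the two roots, in which the shorter-length instances of all five formulas are available as induction hypotheses. Once that induction scaffolding is set up explicitly, the rest of your plan (including the admittedly painful bookkeeping in case (4)) should succeed, but without it the argument for (0) is circular. One further small imprecision: you say the inductive step takes place ``in the generic algebra $U_{\up,\sZ}$,'' but $[M]$ is not invertible there; the induction runs over $\mathbb{Q}(\up)$, and the conclusion transfers to $U_{\up,\sZ}$ because both sides visibly lie in the $\sZ$-form.
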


For $\alpha=\epsilon_i-\epsilon_j\in \Phi$, let $K_\alpha=K_{i,j}=K_iK_j^{-1}$ and define ${K_{i,j};c\brack t}={K_{i,j};c\brack t}_i$ as in \eqref{Kct}, replacing $K_i$ there by $K_{i,j}$. 

\begin{proposition}[{ \cite[(29)\&Proposition 3.9.1]{tk}}]\label{cqrv}
Let $E_{a,b}$ and $E_{d,c}$ be two root vectors  with $a<b$ and $c<d$,  and let $M,N \geq 1$. 
We then have the following commutation formulas in $U_{q,R}$.
\begin{enumerate}
\item  If $b\leq c$ or $c<a<b<d$, then
\[
E_{a,b}^{(M)}E_{d,c}^{(N)}=\begin{cases}(-1)^{\bar E_{a,b}\bar E_{d,c}}E_{d,c}E_{a,b},&\text{if }M=N=1;\\
E_{d,c}^{(N)}E_{a,b}^{(M)},&\text{otherwise}.\end{cases}
\]
\item If $a<c<b=d$
\[
E_{a,b}^{(M)}E_{d,c}^{(N)}=\begin{cases}(-1)^{\bar E_{a,b}\bar E_{d,c}}E_{d,c}E_{a,b}+K_{c,d}E_{a,c},&\text{if }M=N=1;\\
\displaystyle \sum_{t=0}^{\min(M,N)} 
     q_{b}^{-t(N-t)}E_{d,c}^{(N-t)}K_{c,d}^{t}E_{a,b}^{(M-t)}E_{a,c}^{(t)},&\text{otherwise}.\end{cases}
\]
\item If $a=c<b<d$, then
\[
E_{a,b}^{(M)}E_{d,c}^{(N)}=\begin{cases}(-1)^{\bar E_{a,b}\bar E_{d,c}}E_{d,c}E_{a,b}-(-1)^{\bar E_{a,b}\bar E_{d,c}}K_{a,b}E_{d,b},&\text{if }M=N=1;\\
\displaystyle 
 \sum_{t=0}^{\min(M,N)} (-1)^tq_{b}^{-t(M-1-t)}
    E_{d,b}^{(t)}E_{d,c}^{(N-t)}K_{a,b}^{t}E_{a,b}^{(M-t)},&\text{otherwise}.\end{cases}
\]
\item  If $a<b$, then
\[
E_{a,b}^{(M)}E_{b,a}^{(N)}=\begin{cases}(-1)^{\bar E_{a,b}\bar E_{b,a}}E_{b,a}E_{a,b}+(q_a-q_a^{-1})^{-1}(K_{a,b}-K_{a,b}^{-1}),&\text{if }M=N=1;\\
\displaystyle
 \sum_{t=0}^{\min(M,N)} 
       E_{b,a}^{(N-t)}\begin{bmatrix} K_{a,b}; 2t-M-N \\ t \end{bmatrix} E_{a,b}^{(M-t)},&\text{otherwise}.\end{cases}
\]
\item If $a<c<b<d$, then
\[
E_{a,b}^{(M)}E_{d,c}^{(N)}=\begin{cases}(-1)^{\bar E_{a,b}\bar E_{d,c}}E_{d,c}E_{a,b}-(q_b-q_b^{-1})^{-1}K_{c,b}E_{a,c}E_{d,b},\text{ if }M=N=1;\\
\displaystyle \sum_{t=0}^{\min(M,N)} 
    (-1)^tq_{b}^{\frac{-t(2N-3t-1)}{2}}(q_b-q_b^{-1})^{t}[t]_q!E_{d,c}^{(N-t)}
       E_{d,b}^{(t)}K_{c,b}^{t}E_{a,b}^{(M-t)}E_{a,c}^{(t)},\text{ o.w}.\end{cases}
\]

\end{enumerate}
\end{proposition}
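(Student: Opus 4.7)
The strategy is a two-stage induction: first establish each formula in the base case $M=N=1$, then extend to arbitrary divided powers $M,N$. Part (0), that $E_{a,b}^2=0$ for odd roots, follows from (QG6) (which gives $E_m^2=0$) together with induction on $b-a$ using the recursive definition $E_{a,b}=E_{a,c}E_{c,b}-q_c^{\pm}E_{c,b}E_{a,c}$: when $\alpha_{a,b}=\epsilon_a-\epsilon_b$ is odd, exactly one of the intermediate roots $\epsilon_a-\epsilon_c$, $\epsilon_c-\epsilon_b$ is odd, so expanding $E_{a,b}^2$ and applying the smaller instance of part (0) together with the quadratic Serre relations in (QG5)--(QG6) makes everything collapse.

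For the $M=N=1$ cases in (1)--(5), I would induct on $(b-a)+(d-c)$, using the inductive definition of root vectors, the relations (QG1)--(QG6), and the super Jacobi identity
\[
[X,[Y,Z]_s]_s = [[X,Y]_s,Z]_s + (-1)^{\bar X\bar Y}[Y,[X,Z]_s]_s.
\]
Cases (1)--(3), which involve no $K$-factors, reduce to repeated applications of super-Serre. Case (4) (the ``diagonal'' commutator $[E_{a,b},E_{b,a}]_s$) is the super generalization of (QG3): its base case $b=a+1$ is (QG3) itself, and the inductive step writes $E_{a,b}=E_{a,c}E_{c,b}-q_c^{-1}E_{c,b}E_{a,c}$ for some intermediate $c$, reducing to smaller diagonal commutators whose $K$-pieces combine telescopically into $(q_a-q_a^{-1})^{-1}(K_{a,b}-K_{a,b}^{-1})$. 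Case (5) is similar but produces only a single correction term $(q_b-q_b^{-1})^{-1}K_{c,b}E_{a,c}E_{d,b}$.

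For the divided-power extensions I would induct on $M+N$. The identity $[M+1]_{q_b}E_{a,b}^{(M+1)}=E_{a,b}E_{a,b}^{(M)}$ (valid for even roots; odd-root divided powers of order $\geq 2$ vanish by (0)) lets me multiply the $(M,1)$ relation on the left by $E_{a,b}$. Combining the resulting products through the quantum Vandermonde identity
\[
\bigg[{M+N\atop t}\bigg]_q =\sum_s q_b^{s(M-t+s)}\bigg[{M\atop s}\bigg]_q\bigg[{N\atop t-s}\bigg]_q
\]
regroups everything into the displayed sums in (2), (3), (5). For $K$-factor commutations I would iterate Proposition~\ref{KE} together with the standard shift rule
\[
\bigg[{K_i;c\atop t}\bigg]_i E_{a,b}^{(M)} = E_{a,b}^{(M)}\bigg[{K_i;c+\epsilon_{i;a,b}M\atop t}\bigg]_i,
\]
where $\epsilon_{i;a,b}\in\{0,\pm 1,\pm 2\}$ is determined by whether $i\in\{a,b\}$.

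The hard part will be case (4), where the Gaussian symbol $\bigl[{K_{a,b};2t-M-N\atop t}\bigr]$ must emerge with precisely the shift $2t-M-N$. The cleanest route is to notice that both sides of (4) lie in the subalgebra generated by $E_{a,b}, E_{b,a}, K_a^{\pm 1}, K_b^{\pm 1}$, which embeds as a quantum $\mathfrak{sl}_2$ when $\alpha_{a,b}$ is even and as a quantum $\mathfrak{osp}(1|2)$ when $\alpha_{a,b}$ is odd; the formula then reduces to the corresponding rank-one identity, established in the nonsuper case by Lusztig. The parity signs $(-1)^{\bar E_{a,b}\bar E_{d,c}}$ demand careful bookkeeping throughout, but they are forced by the $M=N=1$ case together with the fact that $E_{a,b}^{(M)}$ is automatically even for $M\geq 2$, so no genuinely new super phenomena arise at higher divided powers beyond those already encoded in the base case.
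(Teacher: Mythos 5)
This proposition is not proved in the paper at all: the formulas are imported verbatim from El~Turkey--Kujawa, cited in the statement as \cite[(29)\&Proposition 3.9.1]{tk}, and the surrounding text only observes that, having been established over $\sZ$, they persist under base change $\up\mapsto q\in R$. So your blind attempt is reconstructing a proof that the paper itself delegates to the reference. Your strategy---prove the $M=N=1$ cases by induction on $(b-a)+(d-c)$ using the recursive definition of root vectors, the super Jacobi identity, and the quadratic/Serre relations, then lift to divided powers by induction on $M+N$ using $q$-Vandermonde and commutation of $K$-symbols past root vectors---is exactly the shape of argument that \cite{tk} carries out (and, in the non-super setting, Lusztig), so you are not taking a different route from the source, merely re-deriving it.

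A few points you should tighten if you want this to stand as a proof rather than a sketch. First, the claim ``$E_{a,b}^2=0$ for odd roots'' is part (0) of Proposition~\ref{ppco}, not of the present proposition; you do need it (it is what makes the divided-power cases degenerate for odd roots), but you should cite it rather than re-prove it, and your appeal to ``quadratic Serre relations in (QG5)'' is off since (QG5) is cubic; the quadratic vanishing comes from (QG6). Second, your ``quantum Vandermonde'' exponent $q_b^{\,s(M-t+s)}$ does not match the balanced Gaussian binomial $\big[{s\atop t}\big]=\frac{[s]!}{[t]![s-t]!}$ used in this paper; for the symmetric $[i]=\frac{\up^i-\up^{-i}}{\up-\up^{-1}}$ the exponent must be recentered, and getting it wrong will spoil the coefficient $q_b^{-t(N-t)}$ in case~(2) and the more delicate ones in (3) and (5). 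Third, for case~(4) your proposed reduction to a rank-one subalgebra is clean when $\alpha_{a,b}$ is even, but it implicitly requires a braid/symmetry moving $E_{a,b}$ to a simple root vector; in the super setting this is not available in the generality you need, which is why \cite{tk} (and the present authors in the appendix, Proposition~\ref{lowe2}) work directly with the recursive commutator identities. For odd $\alpha_{a,b}$ the higher divided powers vanish, so only $M=N=1$ remains there and a direct induction on $b-a$ via the definition of $E_{a,b}$ and (QG3) is both necessary and sufficient; invoking quantum $\mathfrak{osp}(1|2)$ adds nothing and requires separate justification.

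In short: your plan is sound and essentially the same as the cited source's, but it is a strategy outline, not a proof---the $q$-Vandermonde exponent needs correcting, the rank-one reduction in case~(4) needs to be replaced or justified, and the sign bookkeeping (which you correctly note collapses because $E_\alpha^{(M)}$ is even whenever $M\geq2$ and nonzero) should be stated as a lemma and used uniformly rather than ``forced'' by inspection.
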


The commutation formulas can easily be used to obtained  the so-called PBW bases.
Let
\begin{equation}\label{Pmn}
P(m|n)=\{
A=(A_\alpha)_{\alpha\in \Phi}
\,|\,
A_\alpha\in \mathbb N\mbox{ if } \bar{\alpha}=\bar0
\mbox{ and }  A_\alpha\in\{0,1\} \mbox{ if } \bar{\alpha}=\bar1
\}.
\end{equation}
For $A\in P(m|n)$ and any fixed ordering on $\Phi^+$ and $\Phi^-$,  let
\begin{equation}\label{E_A}
E_A=\prod_{\alpha\in\Phi^+}E_{\alpha}^{(A_\alpha)},
\
F_A=\prod_{\beta\in\Phi^-}E_\beta^{(A_\beta)}.
\end{equation}
Then  $U_{q,R}$ has an (integral) $R$-basis (see \cite[\S3.10]{tk})
\begin{equation}\label{ubas}
 \left\{
 E_A\prod_{a=1}^{m+n}\left(K_a^{\sigma_a}  {K_a\brack \mu_a}\right) F_A
 \,\bigg|\,
 A\in P(m|n),\sigma_a\in\{0,1\},\mu\in \mathbb N^{m+n}
\right \}.
 \end{equation}

%
%
%

We define analogously positive part, negative part and zero part
as in the non-super case:
$U_{q,R}^+, { U}_{q,R}^-,{ U}_{q,R}^0.$ Denote
$U_{q,R}^{\geq0}= { U}_{q,R}^+{ U}_{q,R}^0.$

\begin{remark}\label{Lu90} In \cite[\S2.3, Thm~4.5]{Lu90}, Lusztig gave a presentation for the $\sZ$-form $U_\sZ$ of a quantum group associated with a symmetric Cartan matrix. It should not be hard to generalise this work to get a presentation for $U_{\up,\sZ}(m|n)$ and for $U_{\up,R}(m|n)$. 
\end{remark}

%
%
%

\section{The $q$-Schur superalgebras $S_{q,R}(m|n,r)$}
We first review the definition of $q$-Schur superalgebras in terms of an endomorphism algebra of a $q$-permutation module over the Hecke algebra $H_{q^2,R}$ associated with the symmetric group $\fS_r$ on $r$ letters. Let $S=\{s_i=(i,i+1)\}$ be the generating set of basic transpositions.

The Hecke algebra ${H}_{q^2,R}={H}_{q^2,R}(r)$ is the $R$-algebra with generators $T_i$, $1\leq i\leq r-1$, which subject to the relation
$$T_iT_j=T_jT_i, |i-j|>1;\quad T_iT_jT_i=T_jT_iT_j, |i-j|=1;\quad T_i^2=(q^2-1)T_i+q^2.$$
By setting $T_{s_i}=T_i$ and $T_w=T_{i_1}\cdots T_{i_l}$ if $w=s_{i_1}\cdots s_{i_l}$ is a reduced expression, ${H}_{q^2,R}$ is a free $R$-module with basis $\{T_w\mid w\in \fS_r\}$ and the
multiplication satisfies the rules: for $s\in S$,
\begin{equation}
T_wT_s=\left\{\begin{aligned} &T_{ws},
&\mbox{if } \ell(ws)>\ell(w);\\
&(q^2-1)T_w+q^2 T_{ws}, &\mbox{if } \ell(ws)<\ell(w).
\end{aligned}
\right.
\end{equation}
Since $q^2$ is invertible, it follows that $T_i^{-1}$ exists and every basis element $T_w$ is invertible.



The Hecke algebra $H_{q^2,R}$ admits the following $R$-algebra automorphism
\begin{equation}\label{sharpH}
(-)^\sharp:H_{q^2,R}\longrightarrow H_{q^2,R}, \quad T_i\longmapsto (q^{2}-1)-T_i
\end{equation}
Since the symmetric group $\fS_r$ is the Coxeter group associated with Coxeter graph
$$\underset 1\circ\!\!-\!\!\!-\!\!\!-\!\!\underset2\circ\!\!-\!\!\!-\!\!\!-\!\!\underset3\circ\!\!-\!\!\!-\!\!\cdots\cdots \!\!-\!\!\!-\!\!\!\!\underset{r-2}\circ\!\!\!\!-\!\!\!-\!\!\!-\!\!\!\underset{r-1}\circ,$$
the graph automorphism $(-)^\dagger$ sending $i$ to $r-i$ induces a group automorphism and an $R$-algebra automorphism
\begin{equation}\label{dagger}
\aligned
(-)^\dagger&:\fS_r\longrightarrow \fS_r,\quad s_i\longmapsto s_{r-i};\\
(-)^\dagger&:H_{q^2,R}\longrightarrow H_{q^2,R},\quad T_i\longmapsto T_{r-i}.
\endaligned
\end{equation}

For a composition $\la$ of $r$, i.e., $\la$ is an element of the set
$$\Lambda(N,r)=\{(\lambda_1,\dots,\lambda_{N})\in\mathbb N^N\mid\sum_{i=1}^N\lambda_i=r\},\text{ for some }N,$$
let $\fS_\la$ be the associated parabolic (or standard Young) subgroup and let $\sD_\la:=\mathcal{D}_{\fS_\la}$ be
the set of all shortest coset representatives of the right
cosets of $\fS_\la$ in $\fS_r$. Let
$\mathcal{D}_{\lambda\mu}=\mathcal{D}_\lambda\cap\mathcal{D}^{-1}_{\mu}$ be the set of the shortest
$\fS_\lambda$-$\fS_\mu$ double coset representatives.

For $\la,\mu\in\La(N,r)$ and
$d\in\mathcal{D}_{\la\mu}$, the subgroup 
$$\fS_{\la d\cap\mu}:=\fS_\la^d\cap
\fS_\mu=d^{-1}\fS_\la d\cap \fS_\mu$$ is a parabolic subgroup associated
with the composition $\la d\cap\mu$ which can be easily described in terms of the following 
$N\times N$-matrix $A=(a_{i,j})$, where $a_{i,j}=|R^\la_i\cap d(R^\mu_j)|$: 
if $\nu^{(j)}=(a_{1,j},a_{2,j},\ldots,a_{N,j})$ denotes the $j$th column of $A$, then
\begin{equation}\label{ladmu}
 \la d\cap\mu=(\nu^{(1)},\nu^{(2)},\ldots,\nu^{(N)}).
  \end{equation}
Putting $\jmath(\la,d,\mu)=\big(|R^\la_i\cap d(R^\mu_j)|\big)_{i,j}$, we obtain a bijection
\begin{equation}\label{jmath}
\jmath:\bigcup_{ \la,\mu\in\La(N,r)}\{(\la,d,\mu)\mid d\in\sD_{\la\mu}\}\longrightarrow \sM(N,r),
\end{equation}
where $\sM(N,r)$ is the subset of the $N\times N$ matrix ring $M_N(\mathbb N)$ over $\mathbb N$ consisting of matrices  $A=(a_{i,j})$
whose entries sum to $r$, i.e., $|A|:=\sum_{i,j}a_{i,j} =r$. Note that, if $\jmath(\la,d,\mu)=A$, then 
\begin{equation}\label{ro co}
\la=\ro(A):=(\sum_{j=1}^Na_{1,j},\ldots,\sum_{j=1}^Na_{N,j}),\,\quad\,\mu=\co(A):=(\sum_{i=1}^Na_{i,1},\ldots,\sum_{i=1}^Na_{i,N}).
\end{equation}

For $A=(a_{i,j})\in\sM(N,r)$, let $A^\dagger=(a_{i,j}^\dagger)$, 
where $a_{i,j}^\dagger=a_{N-j+1,N-i+1}$. So $A^\dagger$ is obtained by two transposes along diagonal 
and anti-diagonal respectively.
We thus have a bijection
  $$(-)^\dagger:\sM(N,r)\longrightarrow \sM(N,r),\quad A\longmapsto A^\dagger,$$
and $\jmath(\la^\dagger,d^\dagger,\mu^\dagger)=A^\dagger$, where $\nu^\dagger$ denotes the composition 
obtained by reversing the sequences $\nu$, i.e., 
$$\nu^\dagger=(\nu_N,\ldots,\nu_2,\nu_1),\text{ if }\nu=(\nu_1,\nu_2,\ldots,\nu_N).$$ 

For the description of a super structure, we consider two nonnegative integers $m,n$. Thus, a composition $\la$ of $m+n$ parts will be written as
$$\la=(\lambda^{(0)}|\lambda^{(1)})=(\lambda^{(0)}_1,\lambda^{(0)}_2,\cdots,\lambda^{(0)}_m|\lambda^{(1)}_1,
\lambda^{(1)}_2,\cdots,\lambda^{(1)}_n)$$ to indicate the``even'' and ``odd'' parts of $\la$. Let
\begin{equation*}
\aligned
\Lambda(m|n,r):&=\La(m+n,r)
=\bigcup_{r_1+r_2=r}(\La(m,r_1)\times\La(n,r_2)),\\
\Lambda(m|n):&=\bigcup_{r\geq0}\La(m|n,r)=\mathbb N^{m+n}.
\endaligned
\end{equation*}

For $\lambda=(\lambda^{(0)}\mid\lambda^{(1)})\in\Lambda(m|n,r)$, we also write
\begin{equation}\label{notation}
\fS_\la=
\fS_{\la^{(0)}}\fS_{\la^{(1)}}\cong \fS_{\la^{(0)}}\times \fS_{\la^{(1)}},
\end{equation}
 where
$\fS_{\lambda^{(0)}}\leq{\mathfrak S}_{\{1,2,\ldots,|\lambda^{(0)}|\}}$
and
$\fS_{\lambda^{(1)}}\leq{\mathfrak S}_{\{|\lambda^{(0)}|+1,\ldots,r\}}$ are the even and odd parts of $\fS_\la$, respectively. Define
\begin{equation}
\xy_\la:=x_{\la^{(0)}}y_{\la^{(1)}},\quad \yx_\la=y_{\la^{(0)}}x_{\la^{(1)}},
\end{equation}
 where $x_{\lambda^{(i)}}=\sum_{w\in \fS_{\lambda^{(i)}}}T_w, \;\;y_{\lambda^{(i)}}=\sum_{w\in
\fS_{\lambda^{(i)}}}(-q^2)^{-\ell(w)}T_w.$


The endomorphism algebra
\begin{equation}\label{Tmnr}
S_{q,R}=S_{q,R}(m|n,r):=\End_{H_{q^2,R}(r)}\bigg(\bigoplus_{\lambda\in\Lambda(m|
n,r)}\xy_\la {H}_{q^2,R}(r)\bigg)
\end{equation}
is called the {\it$q$-Schur superalgebra} of degree ($m|n,r$).

By definition, for $\la=(\lambda^{(0)},\lambda^{(1)})$, $\la^\dagger=(\lambda^{(1)\dagger},\lambda^{(0)\dagger})$. Let $\la^+=(\lambda^{(0)\dagger},\lambda^{(1)\dagger})$. Then
$$(\xy_\la)^\dagger=(x_{\la^{(0)}})^\dagger(y_{\la^{(1)}})^\dagger=y_{\la^{(1)\dagger}}x_{\la^{(0)\dagger}}=\yx_{\la^\dagger}.$$
Since $\fS_{\la^\dagger}$ and $\fS_{\la^+}$ are conjugate, there exists $d\in\fS_r$ such that
$\yx_{\la^\dagger}T_d=T_d \xy_{\la^+}$. Hence, $\yx_{\la^\dagger}H_{q^2,R}=T_d\xy_{\la^+}H_{q^2,R}\cong \xy_{\la^+}H_{q^2,R}$. Now, we see the following easily.

\begin{lemma}\label{xy to yx} For $m=n$, we may identify $S_{q,R}(n|n,r)$ with the endomorphism algebra 
$\End_{H_{q^2,R}}\bigg(\bigoplus_{\lambda\in\Lambda(n|n,r)}\yx_\la {H}_{q^2,R}(r)\bigg)$. In particular, the isomorphism $(\ )^\dagger$ in \eqref{dagger} induces an isomorphism of right $H_{q^2,R}$-modules 
$$f:\bigoplus_{\lambda\in\Lambda(n|
m,r)}\xy_{\la} {H}_{q^2,R}\longrightarrow\bigoplus_{\lambda\in\Lambda(m|
n,r)}\yx_{\la^\dagger}{H}_{q^2,R},\quad m\longmapsto m^\dagger,$$
which further results in an $R$-algebra automorphism
\begin{equation}\label{barsig}
(\ )^\dagger:S_{q,R}(n|n,r)\longmapsto S_{q,R}(n|n,r), \phi\longmapsto f\phi f^{-1}.
\end{equation}
\end{lemma}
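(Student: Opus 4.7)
The plan is to verify the two assertions of the lemma by unpacking the setup built just before the statement. Part (i) asks for an identification of $S_{q,R}(n|n,r)$ with an endomorphism algebra of the $\yx$-version of the permutation module, while part (ii) asks for the algebra automorphism of $S_{q,R}(n|n,r)$ induced by the Hecke algebra automorphism $(\ )^\dagger$ of \eqref{dagger}. The key tool is that when $m=n$, the combinatorial operations $\la\mapsto\la^\dagger=(\la^{(1)\dagger},\la^{(0)\dagger})$ and $\la\mapsto\la^+=(\la^{(0)\dagger},\la^{(1)\dagger})$ both restrict to bijections $\Lambda(n|n,r)\to\Lambda(n|n,r)$ (they permute each summand $\Lambda(n,r_1)\times\Lambda(n,r_2)$).

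For part (i), I would combine these bijections with the isomorphism $\yx_{\la^\dagger}H_{q^2,R}\cong\xy_{\la^+}H_{q^2,R}$ recorded in the paragraph preceding the lemma (implemented by the element $T_d$ conjugating $\fS_{\la^\dagger}$ to $\fS_{\la^+}$). Reindexing gives an isomorphism of right $H_{q^2,R}$-modules
$$\bigoplus_{\la\in\Lambda(n|n,r)}\yx_\la H_{q^2,R}\;\cong\;\bigoplus_{\mu\in\Lambda(n|n,r)}\xy_\mu H_{q^2,R},$$
and applying $\End_{H_{q^2,R}}$ yields the stated identification.

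For part (ii), I would first note that $(\xy_\la)^\dagger=\yx_{\la^\dagger}$, so the bijection $m\mapsto m^\dagger$ sends $\xy_\la H_{q^2,R}$ to $\yx_{\la^\dagger}H_{q^2,R}$ (using that $(\ )^\dagger$ is an algebra automorphism of $H_{q^2,R}$). The map $f$ satisfies the $\dagger$-semilinearity relation $f(mh)=f(m)h^\dagger$ rather than strict $H$-linearity; this is the only subtle point, and the sense in which it is an ``isomorphism of right $H_{q^2,R}$-modules'' is the $\dagger$-twisted one. Nonetheless, conjugation by $f$ preserves $H_{q^2,R}$-linearity: for $\phi\in\End_{H_{q^2,R}}(\bigoplus_\la \xy_\la H_{q^2,R})$, $n$ in the target, and $h\in H_{q^2,R}$, the two $\dagger$-twists cancel,
\begin{align*}
(f\phi f^{-1})(nh)&=f\bigl(\phi(f^{-1}(n)\cdot h^{\dagger^{-1}})\bigr)=f\bigl(\phi(f^{-1}(n))\bigr)\cdot(h^{\dagger^{-1}})^\dagger=(f\phi f^{-1})(n)\cdot h.
\end{align*}
So $\phi\mapsto f\phi f^{-1}$ defines a ring isomorphism between the two endomorphism algebras, which via the identification of part (i) becomes an $R$-algebra automorphism of $S_{q,R}(n|n,r)$. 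The main (and only) obstacle is the semilinearity of $f$: one must be comfortable that $f$ is not literally a right $H_{q^2,R}$-module map, while the conjugation $f(\cdot)f^{-1}$ nevertheless lands in $\End_{H_{q^2,R}}$ of the target. Once this is acknowledged, the rest is bookkeeping on direct sums and reindexing.
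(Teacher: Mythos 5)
Your proof is correct and follows the route the paper itself sketches in the paragraph immediately before the lemma (the paper states the result with ``Now, we see the following easily,'' so there is no detailed proof to compare against). You use the same ingredients: $(\xy_\la)^\dagger=\yx_{\la^\dagger}$, the isomorphism $\yx_{\la^\dagger}H_{q^2,R}\cong\xy_{\la^+}H_{q^2,R}$ via $T_d$, and the fact that $\dagger$ and $+$ are bijections on $\Lambda(n|n,r)$ when $m=n$.

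The one place you go beyond the paper is worth highlighting: you correctly point out that the map $f\colon m\mapsto m^\dagger$ satisfies $f(mh)=f(m)h^\dagger$, so it is $\dagger$-semilinear rather than strictly $H_{q^2,R}$-linear (the paper's phrase ``isomorphism of right $H_{q^2,R}$-modules'' is thus an abuse of terminology, accurate only if the target is equipped with the $\dagger$-twisted action). Your verification that the two twists cancel in $(f\phi f^{-1})(nh)=(f\phi f^{-1})(n)\,h$ is exactly what is needed to conclude that conjugation by $f$ still lands in the endomorphism algebra and hence induces the claimed $R$-algebra automorphism. One small cosmetic point: since $(\ )^\dagger$ is an involution, $h^{\dagger^{-1}}=h^\dagger$, so you could write the computation more directly; and, since $T_i\mapsto T_{r-i}$ is conjugation by $T_{w_0}$, one could alternatively compose $f$ with right multiplication by a suitable element to produce an honest $H$-linear isomorphism, though your semilinear formulation suffices for the conclusion.
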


Following \cite[(5.3.2)]{dr}, for $\lambda,\mu\in\Lambda(m|n,r)$,
 define
 $$\mathcal{D}^\circ_{\lambda\mu}=\{d\in\sD_{\la\mu}\mid\fS_{\la^{(i)}}^d\cap\fS_{\mu^{(j)}}=1\;\forall \bar i+\bar j=1\}.$$
 Then all $\jmath(\lambda,d,\mu)$ with $\lambda,\mu\in\Lambda(m|n,r)$, $d\in \mathcal{D}^\circ_{\lambda\mu}$ form the matrix set
 \begin{equation}\label{M(m|n,r)}
 \aligned
 \sM(m|n,r)&=\{A=(a_{ij})\in M_{m+n}(\mathbb N)\mid a_{i,j}\in\{0,1\}\;\forall \bar i+\bar j=1,|A|=r\},\\
 \sM(m|n)&=\bigcup_{r\geq0} \sM(m|n,r).\endaligned
 \end{equation}
We may interpret an element $(A_\al)_{\al\in\Phi}\in P(m|n)$ in \eqref{Pmn} as a matrix $A=(A_{i,j}) \in \sM(m|n)$, where $A_{i,j}=A_\al$ if $\al=\epsilon_i-\epsilon_j$ and $A_{i,i}=0$ for all $i$.

For $A=\jmath(\lambda,d,\mu)$, putting
$$T_{\fS_\lambda d \fS_\mu}:=\sum_{\substack{w_0\in \fS_{\mu^{(0)}},w_1\in \fS_{\mu^{(1)}}\\ w_0w_1
\in \fS_\mu\cap \mathcal{D}_{\la d\cap\mu}}}(-\up^2)^{-\ell(w_1)}x_{\lambda^{(0)}}y_{\lambda^{(1)}}T_dT_{w_0}T_{w_1},$$
there exists an $\mathcal{H}$-homomorphism $\phi_A:=\phi^d_{\lambda\mu}$ defined by
$$\phi_{\la\mu}^d(x_{\alpha^{(0)}}y_{\alpha^{(1)}}h)=\delta_{\mu,\alpha}T_{\fS_\lambda d
\fS_\mu}h, \forall \alpha\in\Lambda(m|
n,r),h\in\mathcal{H}.$$

Let $d^{(0)}$ (resp. $d^{(1)}$) to be the longest element in the double coset 
$\fS_{\lambda^{(0)}} d \fS_{\mu^{(0)}}$(resp. $\fS_{\lambda^{(1)}} d \fS_{\mu^{(1)}}$ ). 
Following \cite[(6.0.2)]{dr}, let $\mathcal{T}_A=\up^{-l(d^{(0)})+l(d^{(1)})-l(d)}T_{\fS_\lambda d \fS_\mu}.$ Then
$$
[A]=\up^{-l(d^{(0)})+l(d^{(1)})-l(d)+l(w_{0,\mu^{(0)}})-l(w_{0,\mu^{(1)}})}\phi_A,
$$
where $w_{0,\la}$ denotes the longest element in $\fS_\la$, is the map $\mathcal T_{\fS_\mu}\mapsto \mathcal{T}_A$. The first assertion of the following result is
 given in {\cite[5.8]{dr}}. 
\begin{lemma}\label{DR5.8} The set $\{\phi_A\mid
A\in \sM(m|n,r)\}$
forms a $R$-basis for $S_{q,R}(m|n,r )$. Moreover, we have 
$[A]^\dagger=[A^\dagger]$.
\end{lemma}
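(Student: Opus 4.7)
The first assertion is cited from \cite[5.8]{dr}, so my proof proposal focuses on the identity $[A]^\dagger=[A^\dagger]$. The strategy is to unfold the definition of the $\dagger$-automorphism on $S_{q,R}(n|n,r)$ (Lemma \ref{xy to yx}) as the conjugation $\phi\mapsto f\phi f^{-1}$, and then trace its effect on the normalized double-coset map $[A]$ by directly computing $(T_{\fS_\la d\fS_\mu})^\dagger$ inside the Hecke algebra via \eqref{dagger}.

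First I would check the combinatorial compatibilities $\jmath(\la^\dagger,d^\dagger,\mu^\dagger)=A^\dagger$ and $d^\dagger\in\sD^\circ_{\la^\dagger\mu^\dagger}$, both of which follow from the observation that the involution $k\mapsto r-k+1$ on $\{1,\ldots,r\}$ sends $R^\la_i$ to $R^{\la^\dagger}_{N+1-i}$ (with $N=2n$) and realizes $d^\dagger$ as the conjugate of $d$ by the longest element. Applying the length-preserving algebra automorphism $\dagger$ from \eqref{dagger} termwise to
\[
T_{\fS_\la d\fS_\mu}=\sum_{w_0w_1\in\fS_\mu\cap\sD_{\la d\cap\mu}}(-\up^2)^{-\ell(w_1)}\xy_\la T_dT_{w_0}T_{w_1},
\]
and reindexing by $u_0=w_1^\dagger\in\fS_{(\mu^\dagger)^{(0)}}$ and $u_1=w_0^\dagger\in\fS_{(\mu^\dagger)^{(1)}}$ (which commute because of disjoint supports), I would show that $(T_{\fS_\la d\fS_\mu})^\dagger$ is precisely the $\yx$-analogue of the double-coset element attached to $(\la^\dagger,d^\dagger,\mu^\dagger)$, namely the same sum with $\xy$ replaced by $\yx$ and the sign weight now attached to the even factor $u_0$ (which is consistent with $\yx_\nu=y_{\nu^{(0)}}x_{\nu^{(1)}}$).

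The final step is to transport this $\yx$-expression back to the $\xy$-endomorphism realization of $S_{q,R}(n|n,r)$ via the identification in Lemma \ref{xy to yx}, and to match the coefficient
\[
\up^{-\ell(d^{(0)})+\ell(d^{(1)})-\ell(d)+\ell(w_{0,\mu^{(0)}})-\ell(w_{0,\mu^{(1)}})}
\]
in $[A]$ with the analogous coefficient for $[A^\dagger]$. The required length identities $\ell(d^{(i)}_{A^\dagger})=\ell(d^{(1-i)}_A)$, $\ell(d^\dagger)=\ell(d)$, and $\ell(w_{0,(\mu^\dagger)^{(i)}})=\ell(w_{0,\mu^{(1-i)}})$ follow from the fact that $\dagger$ is length-preserving and the longest element of a $\dagger$-conjugated double coset is the $\dagger$-image of the original longest element.

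The main obstacle I anticipate is the bookkeeping of scalar corrections in the passage between the $\xy$- and $\yx$-realizations: the intertwiner $\yx_{\la^\dagger}H\cong\xy_{\la^+}H$ recorded just before Lemma \ref{xy to yx} could in principle introduce extra factors, and these must be shown to cancel against the apparent mismatch in the exponents between $[A]$ and $[A^\dagger]$ (a direct calculation shows the two exponents sum to $-2\ell(d)$ rather than to $0$). I expect this cancellation to be forced by the intrinsic nature of the normalization $\sT_A$, which was designed precisely so that the element is insensitive to the choice between the $\xy$- and $\yx$-realization of the Hecke module, but verifying it explicitly via the conjugating element provided by Lemma \ref{xy to yx} is where the only genuine technical work lies.
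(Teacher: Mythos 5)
Your proposal is recognizably in the same spirit as the paper's proof (compute $(T_{\fS_\la d\fS_\mu})^\dagger$ and match normalizations), but you take a more laborious route at the central step. The paper never touches the sum over $\fS_\mu\cap\sD_{\la d\cap\mu}$; instead it rewrites
\[
T_{\fS_\la d\fS_\mu}=\frac{1}{P_\nu(\up^2)}\,\xy_\la\,T_d\,\xy_\mu,\qquad
P_\nu(\up^2)=\sum_{w_0\in\fS_{\nu^{(0)}},\,w_1\in\fS_{\nu^{(1)}}}\up^{2\ell(w_0)}\up^{-2\ell(w_1)},
\quad \nu=\la d\cap\mu,
\]
and then applying the length-preserving algebra automorphism $(-)^\dagger$ gives instantly $(\xy_\la T_d\xy_\mu)^\dagger=\yx_{\la^\dagger}T_{d^\dagger}\yx_{\mu^\dagger}$, together with the manifestly $\dagger$-invariant scalar $P_\nu(\up^2)=P_{\nu^\dagger}(\up^2)$. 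This removes all the reindexing over coset representatives that you propose, and it keeps the sign weight automatically attached to the correct factor (there is no need to track $(-\up^2)^{-\ell(\cdot)}$ jumping from $w_1$ to $w_0$). Your route can be made to work, but it is precisely where the bookkeeping error risk is highest; the closed product formula is the key simplification you are missing.

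On the scalar matching: your worry that $e_A+e_{A^\dagger}=-2\ell(d)$ "rather than $0$" is not the right test, and this is where the proposal has a genuine conceptual gap. The identity $[A]^\dagger=[A^\dagger]$ is not the statement $\up^{e_A}\phi_A^\dagger=\up^{e_{A^\dagger}}\phi_{A^\dagger}$ with $\phi_A^\dagger$ interpreted naively: after conjugation by $f$, the element $[A]^\dagger$ lives in the $\yx$-realization of the endomorphism algebra, so one must compare with the $\yx$-normalized $\mathcal T_{A^\dagger}$, whose prefactor is $\up^{\ell(d^{(1)})-\ell(d^{(0)})-\ell(d)}$ (the roles of the even and odd parts are swapped). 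With that normalization, the paper's computation gives
\[
[A]^\dagger(\mathcal T_{\fS_{\mu^\dagger}})=\up^{-\ell(d^{(0)})+\ell(d^{(1)})-\ell(d)}\,(T_{\fS_\la d\fS_\mu})^\dagger
=\mathcal T_{\fS_{\la^\dagger}d^\dagger\fS_{\mu^\dagger}},
\]
and the prefactor literally matches $\up^{\ell(d^{(1)}_{A^\dagger})-\ell(d^{(0)}_{A^\dagger})-\ell(d^\dagger)}$ via $\ell(d^{(i)}_{A^\dagger})=\ell(d^{(1-i)})$ and $\ell(d^\dagger)=\ell(d)$. There is no residual $\up^{-2\ell(d)}$ left over to absorb into an intertwiner; the apparent mismatch you found is an artifact of comparing an $\xy$-prefactor with what should be a $\yx$-prefactor. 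You do not need to "verify the cancellation explicitly via the conjugating element" of Lemma \ref{xy to yx} — the $\mathcal T$-normalization is symmetric in the two realizations by design, and once you work with the $\yx$-version of $\mathcal T_{A^\dagger}$ the coefficients agree on the nose.

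In short: replace your sum-over-cosets reindexing by the single identity $T_{\fS_\la d\fS_\mu}=P_\nu(\up^2)^{-1}\xy_\la T_d\xy_\mu$, and compare prefactors against the $\yx$-normalized $\mathcal T_{A^\dagger}$; then the remaining length identities you already listed close the argument.
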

\begin{proof}It suffices to prove the last statement for $R=\sZ$. Let $A=\jmath(\lambda,d,\mu)$. We have
$$
\aligned[]
[A]^\dagger(\mathcal T_{\fS_{\mu^\dagger}})&=\up^{-l(d^{(0)})+l(d^{(1)})-l(d)}(\phi_A(\xy_\mu))^\dagger
                   =\up^{-l(d^{(0)})+l(d^{(1)})-l(d)}(T_{\fS_\lambda d \fS_\mu})^\dagger\\
&=\frac{\up^{-l(d^{(0)})+l(d^{(1)})-l(d)}}{P_{\nu}(\up^2)}(\xy_\la T_d \xy_\mu)^\dagger\quad(\nu=\la d\cap\mu)\\
&=\frac{\up^{-l(d^{(0)})+l(d^{(1)})-l(d)}}{P_{\nu^\dagger}(\up^2)}(\yx_{\la^\dagger} 
                      T_{d^\dagger} \yx_{\mu^\dagger})\\
&=\up^{-l(d^{(0)})+l(d^{(1)})-l(d)}T_{\fS_{\la^\dagger}d^\dagger\fS_{\mu^\dagger}}
       =\mathcal T_{\fS_{\la^\dagger}d^\dagger\fS_{\mu^\dagger}},
\endaligned
$$
where $P_\nu(\up^2)=\sum_{w_0\in\fS_{\nu^{(0)}},w_1\in\fS_{\nu^{(1)}}}\up^{2l(w_0)}(\up^{-1})^{2l(w_1)}=P_{\nu^\dagger}(\up^2)$ and the last equality is seen from the fact that $\ell({}d^{(i)})=\ell(d^{(i)\dagger})$ for $i=0,1$.
\end{proof}

El Turkey and Kujawa  (\cite[Thm 3.3.1]{tk})  gave a presentation of the $\up$-Schur superalgebra  
$S_\up(m|n, r)$ over $\mathbb Q(\up)$.
They proved that $S_\up(m|n, r)$  is generated by the similar generators and defining
relations for ${U}_\up(m|n)$  over $ \mathbb Q(\up)$  along with relations:
\begin{equation}\label{ETK}
 K_1\cdots K_mK_{m+1}^{-1}\cdots K_{m+n}^{-1}-\up^{r}=0,
\quad (K_a-1)(K_a-\up_a)\cdots(K_a-\up_a^r)=0.
\end{equation}
Thus, if $I_r$ denotes the ideal of ${U}_\up(m|n)$ generated by   $K_1\cdots K_mK_{m+1}^{-1}\cdots K_{m+n}^{-1}-\up^{r}$ 
and $ (K_a-1)(K_a-\up_a)\cdots(K_a-\up_a^r)$, $1\leq a\leq m+n$, then
${U}_\up(m|n)/I_r\cong   S_\up(m|n, r) .$
So we have an algebra epimorphism (see \cite[(20)]{tk} or \cite[Cor. 6.4]{dg}):
\begin{equation}\label{eta_r}
\eta_r: U_\up(m|n)\longrightarrow S_\up(m|n,r).
\end{equation}
In particular, $S_\up(m|n, r)$  has generators:
$$
\tte_{a}=\eta_r(E_{a}),\
\ttf_{a}=\eta_r(F_{a}),\
\ \ttk_j^{\pm 1}=\eta_r(K_j^{\pm 1}).
$$


%
%
%

Put $\tte_{a}^{(M)}=\eta_r(E_{a}^{(M)}),{\ttk_j\brack t} =\eta_r({K_j\brack t})$, etc., and 
let ${S}_{\up,\mathcal Z}={S}_{\up,\mathcal Z}(m|n,r)$ be the $\sZ$-subalgebra of $S_\up(m|n, r)$ generated  by 
\begin{equation}\label{Sgenerators}
\bigg\{\tte_{a}^{(M)}, \ttf_{a}^{(M)}, \ttk_j^{\pm 1}, {\ttk_j\brack t}\;\bigg|\;
t,M\in \mathbb N, 1\leq a< m+n,\, 1\leq j\leq m+n\bigg\}.
\end{equation}
Then ${S}_{\up,\sZ}$ has a $\sZ$-basis of (see \cite[Thm 3.12.1]{tk})
\begin{equation}\label{ibas}
{\bf  Y}=\bigcup\left\{
 \tte_A 1_{\lambda} \ttf_A
 \,|\,
 A\in P(m|n), \lambda\in \Lambda(m|n,r),  \chi(E_AF_A)\preceq\lambda
\right \},
\end{equation}
where  $\chi$ is the content function defined in \cite[3.11]{tk}\footnote{If we identify $A$ with a matrix $(m_{i,j})$, then the $h$th component $\chi (E_AF_A)_h=\sum_{i< h}(m_{i,h}+m_{h,i})$.} and $ \tte_A, \ttf_A$ are images of
the elements $E_A,F_A$ defined in \eqref{E_A}. (Here $\mu\preceq\la$ mean $\mu_i\leq\la_i$ for all $i$.) 


For any commutative ring $R$ and any invertible element $q\in R$, base change via the specialisation 
$\sZ\to R, \up\mapsto q$ results in $R$-algebra 
$${S}_{q,R}=S_{q,R}(m|n,r) \cong{S}_{\up,\sZ}(m|n,r)\otimes_\sZ R.$$ 
Moreover, by restriction and specialisation, the map $\eta_r$ in \eqref{eta_r} induces an $R$-algebra epimorphism (see \cite[Cor. 8.4]{dg}):
\begin{equation}\label{etaR}
\eta_{r, R}:=\eta_{r}\otimes 1: {U}_{q,R}(m|n) \longrightarrow {S}_{q,R}(m|n,r).
\end{equation}
Like in \S2, we will also abuse $X$ as $X\otimes 1$ for simplicity. Thus, ${S}_{q,R}(m|n,r)$ is generated by the elements in \eqref{Sgenerators}.



%
%
%
\section{Presenting ${ S}_{q,R}(m|n,r)$ over a commutative ring $R$}
For any $\mu\in \Lambda(m|n),$ let
${K\brack \mu}=\prod_{a=1}^{m+n}{K_a\brack \mu_a}.$
Let   $J_r=J_{r,R}$ be the ideal of $U_{q,R}=U_{q,R}(m|n)$ generated by
\begin{equation}\label{J_r}
\aligned
1-\sum_{\lambda\in\Lambda(m|n,r)} {K\brack\lambda},\;\;
K_a^{\pm 1} {K\brack\lambda}-q_a^{\pm \lambda_a}{K\brack\lambda},\;\;
{K_a;c\brack t}  {K\brack\lambda} -{\lambda_a+c\brack t}_{q}  {K\brack\lambda},
 \endaligned
 \end{equation}
 where  $1\leq a\leq m+n, t\in\mathbb N, c\in \mathbb Z,\lambda\in\Lambda(m|n,r)$.
 Let  
 $\pi_{r, R}:  { U}_{q,R} \to {\overline U}_{q,R}:={ U}_{q,R}/J_r
$
be the natural homomorphism and put
$$
\sfe^{(M)}_{a,b}=\pi_{r, R}(E^{(M)}_{a,b}),\
\sfk_a^{\pm 1}=\pi_{r, R}(K_a^{\pm 1}),\
{\sfk_a\brack t}=\pi_{r, R}({K_a\brack t}),\
{\sfk\brack\lambda}=\pi_{r, R}({K\brack\lambda}).
$$
\begin{lemma}\label{reab} For any $\lambda\in\Lambda(m|n,r)$, let $1_\lambda:={\sfk\brack\lambda}$. Then
the following hold in ${\overline U}_{q,R}$:
\begin{enumerate}
\item $\sum_{\lambda\in\Lambda(m|n,r)} {1_\lambda}=1$;
\item $\sfk_a^{\pm 1} {1_\lambda}=q_a^{\pm \lambda_a}{1_\lambda},$
${\sfk_a;c\brack t}  {1_\lambda}={\lambda_a+c\brack t}_{q}  {1_\lambda},$ for all $1\leq a\leq m+n, \ t\in\mathbb N,
 c\in \mathbb Z.$
\item $\sfk_1\cdots \sfk_m\sfk_{m+1}^{-1}\cdots \sfk_{m+n}^{-1}=q^{r}.$
\item
$(\sfk_a-1)(\sfk_a-q_a)\cdots(\sfk_a-q_a^r)=0, $ \ $
       1\leq a\leq m+n.$
\item
${\sfk\brack\mu} {1_\lambda}={\lambda\brack \mu}_q {1_\lambda}$ for all $\mu\in\Lambda(m|n)$,
where ${\lambda\brack \mu}_q=\prod_{a=1}^{m+n}{\lambda_a\brack \mu_a}_q.$
Hence, $1_\lambda1_{\mu}=\delta_{\mu,\lambda}{1_\lambda}$. Moreover,
${\sfk\brack\mu}=\sum_{\lambda\in\Lambda(m|n,r)}{\lambda\brack \mu}_q {\sfk\brack\lambda}$ and
${\sfk\brack\mu}=0, \mbox{ if   } |\mu|>r$.
\item For each $\alpha\in\Phi$,
$
\sfe_{\alpha}^{(M)}{1_\la} = \left\{ \begin{array}{lll}
             1_{\la+M\alpha}\sfe_{\alpha}^{(M)}, & \mbox{  if  }  \la+M\alpha\in\Lambda(m|n,r),\\
            0,                              &   \mbox{ otherwise,}
           \end{array}
         \right.
$ and $1_\la E_\alpha^{(M)}=0$ if $\la-M\alpha\not\in\Lambda(m|n,r)$.

\end{enumerate}
\end{lemma}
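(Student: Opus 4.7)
The plan is to derive each of (1)--(6) from the definition of $J_r$ in \eqref{J_r} combined with the commutation formulas of Proposition~\ref{KE}. Parts (1) and (2) are immediate, since each generator of $J_r$ listed in \eqref{J_r} becomes zero in $\overline U_{q,R}$. Part (3) follows from (2): on $1_\lambda$ the product $\sfk_1\cdots\sfk_m\sfk_{m+1}^{-1}\cdots\sfk_{m+n}^{-1}$ acts as $q_1^{\lambda_1}\cdots q_{m+n}^{-\lambda_{m+n}}=q^{|\lambda|}=q^r$, and summing over $\lambda$ via (1) gives the identity. For (4), $(\sfk_a-q_a^{\lambda_a})$ annihilates $1_\lambda$ by (2), so the full product kills $\sum_\lambda 1_\lambda=1$. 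Part (5) comes from applying (2) with $c=0$ factor by factor to ${\sfk\brack\mu}=\prod_a{\sfk_a\brack\mu_a}$; orthogonality, the expansion, and the vanishing for $|\mu|>r$ all follow.

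Part (6) is the substantive content. Iterating Proposition~\ref{KE} in $U_{q,R}$ gives
$$E_\alpha^{(M)}{K_a;c\brack t}={K_a;c-(M\alpha)_a\brack t}E_\alpha^{(M)},$$
where $(M\alpha)_a$ denotes the $a$-th coordinate of $M\alpha\in\mathbb Z^{m+n}$; this descends to the same identity in $\overline U_{q,R}$. Applying it to each factor of $1_\lambda$ and left-multiplying by $1_\mu$, part (2) yields
$$1_\mu\sfe_\alpha^{(M)}1_\lambda=\prod_a{\mu_a-(M\alpha)_a\brack\lambda_a}_q\,1_\mu\sfe_\alpha^{(M)}.$$
The key move is to compute $\bigl({\sfk_a;c_0\brack t}\bigr)\cdot 1_\mu\sfe_\alpha^{(M)}1_\lambda$ in two ways: passing ${\sfk_a;c_0\brack t}$ through $1_\mu$ on the left yields ${\mu_a+c_0\brack t}_q$ times the element, while commuting past $\sfe_\alpha^{(M)}$ and then acting on $1_\lambda$ yields ${(\lambda+M\alpha)_a+c_0\brack t}_q$ times it. Equating produces
$$\Bigl({\mu_a+c_0\brack t}_q-{(\lambda+M\alpha)_a+c_0\brack t}_q\Bigr)\,1_\mu\sfe_\alpha^{(M)}1_\lambda=0.$$

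Whenever $\mu\neq\lambda+M\alpha$--either because both lie in $\Lambda(m|n,r)$ but differ, or because $\lambda+M\alpha$ has a negative coordinate and so is not in $\Lambda(m|n,r)$--some index $a$ satisfies $\mu_a\neq(\lambda+M\alpha)_a$. Setting $c_0=-\min\bigl(\mu_a,(\lambda+M\alpha)_a\bigr)$ and $t=|\mu_a-(\lambda+M\alpha)_a|>0$ turns the two Gaussians into ${0\brack t}_q=0$ and ${t\brack t}_q=1$; the scalar difference is $\pm1$, a unit in $R$, forcing $1_\mu\sfe_\alpha^{(M)}1_\lambda=0$. Expanding $\sfe_\alpha^{(M)}1_\lambda=\sum_\mu 1_\mu\sfe_\alpha^{(M)}1_\lambda$ via (1), only $\mu=\lambda+M\alpha$ can contribute when $\lambda+M\alpha\in\Lambda(m|n,r)$, giving $\sfe_\alpha^{(M)}1_\lambda=1_{\lambda+M\alpha}\sfe_\alpha^{(M)}$; otherwise the sum is empty. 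The companion assertion $1_\lambda\sfe_\alpha^{(M)}=0$ for $\lambda-M\alpha\notin\Lambda(m|n,r)$ follows symmetrically by pushing $\sfe_\alpha^{(M)}$ past $1_\lambda$ from the left.

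The main obstacle is precisely this two-sided ${\sfk_a;c_0\brack t}$ comparison in the invalid case. A direct calculation of $\sfe_\alpha^{(M)}1_\lambda$ leaves spurious terms (e.g., those with $\mu_b<M$ in the positive-root setup) whose Gaussian coefficients are nonzero Laurent polynomials in $q$ and so cannot be killed term-by-term for general $q\in R$. The comparison trick produces a $\pm1$ scalar difference independent of $q$, so the argument works uniformly over any commutative ring--a feature needed for the root-of-unity specialisations treated from \S5 onwards.
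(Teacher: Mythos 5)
Your argument is correct, and for part (6) it takes a genuinely different route from the paper. The paper's proof of (6) multiplies the commutation identity $E_{b,c}^{(M)}{K\brack\lambda}={K_b;-M\brack\lambda_b}{K_c;M\brack\lambda_c}\prod_{a\neq b,c}{K_a\brack\lambda_a}E_{b,c}^{(M)}$ on the left by the extra factor ${K_b;0\brack\lambda_b+M}$; after passing to the quotient this makes the left side exactly $\sfe_{b,c}^{(M)}1_\lambda$, and expanding the right side via $1=\sum_\mu 1_\mu$ produces the coefficient ${\mu_b\brack\lambda_b+M}_q{\mu_b-M\brack\lambda_b}_q{\mu_c+M\brack\lambda_c}_q\prod_{a\neq b,c}{\mu_a\brack\lambda_a}_q$, which for $\lambda,\mu\in\Lambda(m|n,r)$ forces each inequality to be an equality and hence vanishes unless $\mu=\lambda+M\alpha$. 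Your approach instead isolates $1_\mu\sfe_\alpha^{(M)}1_\lambda$ directly and compares the Cartan ``eigenvalues'' ${\mu_a+c_0\brack t}_q$ and ${(\lambda+M\alpha)_a+c_0\brack t}_q$ obtained by letting ${\sfk_a;c_0\brack t}$ act from the left versus commuted through to the right, then chooses $c_0,t$ so that the two Gaussians are $0$ and $1$, making their difference the unit $\pm1$. Both are valid over any commutative ring: the paper by the structural vanishing of ${s\brack t}$ when $0\le s<t$ (so the offending coefficients are the zero polynomial, not merely zero for a particular $q$), you by the manufactured unit. Your variant avoids having to guess the auxiliary factor ${K_b;0\brack\lambda_b+M}$, and it makes the two-sided weight constraint (a ``central character'' argument in the Cartan part) completely transparent; the paper's computation has the advantage of reading off the coefficient explicitly, which is reused in the spanning-set proof of Theorem~\ref{kfie}. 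Your handling of (1)--(5) is the same as the paper's. One small remark: the derivation of $1_\mu\sfe_\alpha^{(M)}1_\lambda=\prod_a{\mu_a-(M\alpha)_a\brack\lambda_a}_q\,1_\mu\sfe_\alpha^{(M)}$ appears in your write-up but is never used in the decisive step; the two-sided comparison replaces it entirely, so it can be dropped.
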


%
%

\begin{proof}The relations (1) and (2) are clear from the definition, while (3) and (4) follow from (1) and (2) as
\begin{equation}\notag
\begin{aligned}
&\sfk_1\cdots \sfk_m\sfk_{m+1}^{-1}\cdots \sfk_{m+n}^{-1}
=\sum_{\lambda\in\Lambda(m|n,r)}\sfk_1\cdots \sfk_m\sfk_{m+1}^{-1}\cdots \sfk_{m+n}^{-1} {\sfk\brack\lambda}\\
&=\sum_{\lambda\in\Lambda(m|n,r)}q_1^{\lambda_1}\cdots 
          q_m^{\lambda_m}q_{m+1}^{-\lambda_{m+1}}\cdots q_{m+n}^{-\lambda_{m+n}} {\sfk\brack\lambda}
=q^{r}\sum_{\lambda\in\Lambda(m|n,r)}
          {\sfk\brack\lambda}=q^{r},
\end{aligned}
\end{equation}
\begin{equation}\notag
\begin{aligned}
&(\sfk_a-1)(\sfk_a-q_a)\cdots(\sfk_a-q_a^r)
=\sum_{\lambda\in\Lambda(m|n,r)}(\sfk_a-1)(\sfk_a-q_a)\cdots(\sfk_a-q_a^r) {\sfk\brack\lambda}\\
&=\sum_{\lambda\in\Lambda(m|n,r)}(q_a^{\lambda_{a}}-1)(q_a^{\lambda_{a}}-q_a)\cdots
(q_a^{\lambda_{a}}-q_a^r) {\sfk\brack\lambda}
=0.\\
\end{aligned}
\end{equation}
Similarly, (5) is seen as follows:
$${\sfk\brack\mu} {\sfk\brack\lambda}=\prod_{a=1}^{m+n}{\sfk_a\brack \mu_a}{\sfk\brack\lambda}
=\prod_{a=1}^{m+n}{\lambda_a\brack \mu_a}_q {\sfk\brack\lambda}
={\lambda\brack \mu}_q {\sfk\brack\lambda},$$
and ${\sfk\brack\mu}=
{\sfk\brack\mu}(\sum_{\lambda\in\Lambda(m|n,r)} {\sfk\brack\lambda})=
\sum_{\lambda\in\Lambda(m|n,r)}{\lambda\brack \mu}_q {\sfk\brack\lambda},$
which is 0 if $|\mu|>r$, as in this case there is $i$ such that $\lambda_i<\mu_i$ and so ${\lambda_i\brack \mu_i}_q=0.$ Consequently, ${1_\mu} {1_\lambda}=\delta_{\mu,\lambda}{1_\lambda}$ if $|\mu|=|\lambda|=r$.

It remains to prove (6). Recall the relations in $U_{\up,\sZ}$:
$
K_bE_{b,c}=\up_bE_{b,c}K_b, K_cE_{b,c}=\up_c^{-1}E_{b,c}K_c
$
and
\begin{equation}\notag
E_{b,c}{K_b\brack \lambda_b}={K_b; -1 \brack \lambda_b} E_{b,c};
\quad
E_{b,c}{K_c\brack \lambda_c}={K_c; 1 \brack \lambda_c} E_{b,c},
\end{equation}
(see, e.g., \cite[p.306]{tk}).
Thus, by induction, we have, for $M>0$,
\begin{equation}\label{rcom1}
E_{b,c}^{(M)}{K_b\brack \lambda_b}={K_b; -M \brack \lambda_b} E_{b,c}^{(M)};
\quad
E_{b,c}^{(M)}{K_c\brack \lambda_c}={K_c; M \brack \lambda_c} E_{b,c}^{(M)}.
\end{equation}
Hence, for $\lambda\in\Lambda(m|n,r), b\neq c,$
\begin{equation*}E_{b, c}^{(M)}{K\brack \lambda} =
E_{b, c}^{(M)}\prod_{a=1}^{m+n}{K_a\brack \lambda_a} 
={K_b; -M \brack \lambda_b}{K_c; M \brack \lambda_c}
\prod_{a\neq b,c} {K_a \brack \lambda_a}E_{b,c}^{(M)}.
\end{equation*}
Multiplying both sides on the left by ${K_b;0\brack \lambda_b+ M}$ and applying \eqref{rcom1}
yield in $U_{\up,\sZ}$:
\begin{equation*}
E_{b, c}^{(M)}{K_b;M\brack \lambda_b+ M}{K\brack \lambda} 
={K_b;0\brack \lambda_b+ M}{K_b; -M \brack \lambda_b}{K_c; M \brack \lambda_c}
\prod_{a\neq b,c} {K_a \brack \lambda_a}E_{b,c}^{(M)}.
\end{equation*}
We now compute the images of both sides in the quotient algebra ${\overline U}_{q,R}$:
\begin{equation}\notag
\mbox{LHS}=\sfe_{b, c}^{(M)}{\sfk_b;M\brack \lambda_b+ M} {1_\lambda}\\
=\sfe_{b, c}^{(M)}{\lambda_b+M\brack \lambda_b+ M}_q {1_\lambda}
=\sfe_{b, c}^{(M)}{1_\lambda}
\end{equation}
by (2), and
\begin{eqnarray*}
\text{RHS}&=&
{\sfk_b;0\brack \lambda_b+ M} {\sfk_b; -M \brack \lambda_b} {\sfk_c; M \brack \lambda_c} 
\prod_{a\neq b,c} {\sfk_a \brack \lambda_a} \sfe_{b,c}^{(M)}\\
&= &\sum_{\mu\in\Lambda(m|n,r)}\left(
{\sfk_b;0\brack \lambda_b+ M}  {\sfk_b;-M \brack \lambda_b} {\sfk_c; M \brack \lambda_c} 
\prod_{a\neq b,c} {\sfk_a \brack \lambda_a}  {1_\mu} \right) \sfe_{b,c}^{(M)}\quad
\\
&=&
\sum_{\mu\in\Lambda(m|n,r)}\left(
{\mu _b \brack \lambda_b+M}_{q}{\mu _b-M \brack \lambda_b}_{q}{\mu _c+M \brack \lambda_c}_{q}
\prod_{a\neq b,c} {\mu _a \brack \lambda_a}_{q} {1_\mu} \right) \sfe_{b,c}^{(M)}.
\end{eqnarray*}
Since, for $\lambda,\mu\in\Lambda(m|n,r),$ 
\begin{eqnarray*}
&&{\mu _b \brack \lambda_b+M}_{q} {\mu _b-M \brack \lambda_b}_{q} {\mu _c+M \brack \lambda_c}_{q} 
\prod_{a\neq b,c} {\mu _a \brack \lambda_a}_{q}  \neq 0\\
&&\iff
{\mu _b\geq\lambda_b+M},{\mu _c+M \geq \lambda_c},
 {\mu _a \geq \lambda_a} , \ \mbox{ for }{a\neq b,c}\\
 &&\iff \mu=\lambda+M\alpha,
\end{eqnarray*}
it follows that
 $$
\mbox{ RHS }  = \left\{ \begin{array}{lll}
              {1_{\lambda+M\alpha}}\sfe_{b, c}^{(M)}, & \mbox{  if  }  \lambda+M\alpha\in\Lambda(m|n,r),\\
            0,                              &   \mbox{ otherwise  }.
           \end{array}
         \right.
$$
as desired. The other case can be done similarly.
\end{proof}

\begin{remark}
The proof above is a modification of that of \cite[Proposition 3.7.1]{tk}. It works now over an arbitrary commutative ring and parameter $q\in R$.
\end{remark}

We are now ready to give a presentation for $S_{q,R}(m|n,r)$; compare the presentation over $\mathbb Q(\up)$ in \cite{tk}. Recall the map $\eta_{r,R}$ in \eqref{etaR} and Remark \ref{Lu90} for a presentation of $U_{q,R}(m|n)$.
%
%
%

\begin{theorem}\label{kfie}
For any commutative ring $R$,  the kernel of $\eta_{r, R}$ is the ideal $J_{r,R}$ generated by
the elements in \eqref{J_r}. In particular, the $q$-Schur superalgebra $S_{q,R}(m|n,r)$ can be presented by the generators as given in \eqref{Sgenerators} and relations for $U_{q,R}(m|n)$ together with  {\rm (1)--(2)} in Lemma \ref{reab}.
\end{theorem}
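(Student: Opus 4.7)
The plan is to establish both containments $J_{r,R}\subseteq\ker\eta_{r,R}$ and $\ker\eta_{r,R}\subseteq J_{r,R}$, deducing the presentation as an immediate consequence.

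For the first containment, I would directly check that the three families of generators of $J_{r,R}$ listed in \eqref{J_r} are sent to zero in $S_{q,R}(m|n,r)$. The element $\xy_\lambda H_{q^2,R}$ is naturally the ``weight $\lambda$'' summand, and the action of the image $\ttk_a$ on $\xy_\lambda$ is multiplication by $q_a^{\lambda_a}$, while that of $\eta_r({K_a\brack t})$ is multiplication by $[{\lambda_a\atop t}]_q$ (and $0$ if $\lambda\notin\Lambda(m|n,r)$). Since $\xi_\lambda:=\eta_r({K\brack\lambda})$ is the projection onto $\xy_\lambda H_{q^2,R}$ (which is $0$ unless $\lambda\in\Lambda(m|n,r)$), the relations $1=\sum_\lambda\xi_\lambda$, $\ttk_a^{\pm1}\xi_\lambda=q_a^{\pm\lambda_a}\xi_\lambda$ and $\eta_r({K_a;c\brack t})\xi_\lambda=[{\lambda_a+c\atop t}]_q\xi_\lambda$ hold in $S_{q,R}(m|n,r)$.

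Having established the first containment, $\eta_{r,R}$ factors as $\eta_{r,R}=\bar\eta_{r,R}\circ\pi_{r,R}$ with $\bar\eta_{r,R}:\overline U_{q,R}\twoheadrightarrow S_{q,R}(m|n,r)$ surjective. The reverse containment is equivalent to the injectivity of $\bar\eta_{r,R}$, which I would prove by a spanning/basis comparison against the $R$-basis $\mathbf Y$ of $S_{q,R}(m|n,r)$ displayed in \eqref{ibas}. Concretely, starting from the PBW basis \eqref{ubas} of $U_{q,R}$, I apply $\pi_{r,R}$ and reduce using Lemma \ref{reab}. Inserting $1=\sum_\lambda 1_\lambda$ on the right of any element $E_A\prod_a(K_a^{\sigma_a}{K_a\brack\mu_a})F_A$ and using Lemma \ref{reab}(2),(5), the zero-part factor becomes a sum of scalar multiples of the idempotents $1_\lambda$. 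Thus the image in $\overline U_{q,R}$ is an $R$-linear combination of monomials of the form $\sfe_A 1_\lambda\sff_A$ with $\lambda\in\Lambda(m|n,r)$. Applying Lemma \ref{reab}(6) inductively to each factor in $\sfe_A$ (moving the idempotent to the left) and symmetrically to $\sff_A$ (moving it to the right), the product vanishes unless $\lambda$ is large enough at each affected coordinate; unpacking this yields exactly the constraint $\chi(E_AF_A)\preceq\lambda$ that defines $\mathbf Y$. Hence $\overline U_{q,R}$ is $R$-spanned by $\{\sfe_A 1_\lambda\sff_A\mid A\in P(m|n),\lambda\in\Lambda(m|n,r),\chi(E_AF_A)\preceq\lambda\}$.

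Because $\bar\eta_{r,R}$ sends this spanning set bijectively onto the $R$-basis $\mathbf Y$ of $S_{q,R}(m|n,r)$, the spanning set is itself $R$-linearly independent, hence an $R$-basis of $\overline U_{q,R}$, and $\bar\eta_{r,R}$ is an isomorphism. The second assertion of the theorem follows by combining this with the known presentation of $U_{q,R}(m|n)$ recalled in Remark \ref{Lu90} and adding the relations (1),(2) of Lemma \ref{reab}, which are precisely the images of the generators of $J_{r,R}$.

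The main technical obstacle is the bookkeeping in the reduction to $\sfe_A 1_\lambda\sff_A$: one must verify that the compatibility condition coming from repeated application of Lemma \ref{reab}(6) assembles exactly to the content function bound $\chi(E_AF_A)\preceq\lambda$, rather than something weaker or stronger. Since Lemma \ref{reab}(6) asserts $\sfe_\alpha^{(M)}1_\lambda=0$ whenever $\lambda+M\alpha\notin\Lambda(m|n,r)$, and the non-vanishing weight shifts accumulate additively as one pushes the idempotent through a product of root-vector powers, the induced constraints on $\lambda$ are governed coordinate-wise by the total ``flow'' at each index, which matches the combinatorial definition of $\chi$ given in the footnote after \eqref{ibas}. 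Beyond this, the proof is routine.
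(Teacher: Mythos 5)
Your proposal follows essentially the same two-step strategy as the paper: first show $J_{r,R}\subseteq\ker\eta_{r,R}$, then show $\bar\eta_{r,R}$ is injective by exhibiting a spanning set of $\overline U_{q,R}$ that maps bijectively onto the $R$-basis $\mathbf Y$ and invoking freeness. The only genuine difference is in the scaffolding. The paper works over $\mathbb Q(\up)$ first: it identifies $J_{r,\mathbb Q(\up)}$ with the ideal $I_r$ from \cite{tk} (whose quotient is already known to be $S_\up(m|n,r)$), passes to the $\sZ$-form to get $J_{r,\sZ}\subseteq\ker\eta_{r,\sZ}$, and only then base-changes to $R$; the spanning of $\overline U_{\up,\sZ}$ by $\widetilde{\mathbf Y}$ is outsourced to the proofs in \cite{tk} and \cite[Prop.~9.1]{doty2}. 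You instead argue directly over $R$, verifying the vanishing of the $J_{r,R}$-generators via the identification $\eta_r({K\brack\lambda})=\phi^1_{\lambda\lambda}$ and the weight action on $\xy_\lambda H_{q^2,R}$, and you sketch the reduction to $\{\sfe_A 1_\lambda\sff_A : \chi(E_AF_A)\preceq\lambda\}$ by commuting idempotents through $\sfe_A$ and $\sff_A$ using Lemma~\ref{reab}(6). This is a cleaner route in that it avoids the detour through $\mathbb Q(\up)$ and $I_r$, but it leans on two facts you don't fully verify: that $\eta_r({K\brack\lambda})$ is the weight idempotent (which the paper itself uses but only via \cite{tk}), and that the coordinate-wise constraints accumulated while pushing $1_\lambda$ through the ordered PBW factors assemble to precisely $\chi(E_AF_A)\preceq\lambda$ and nothing weaker. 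Both points are standard and are handled in the cited sources, so the argument is sound, but you should either cite \cite{tk}/\cite{doty2} at those steps as the paper does, or spell out the bookkeeping.
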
 
\begin{proof}Recall the ideal $I_r$ of $U_{\up}(m|n)$ (over $\mathbb Q(\up)$) generated  by the elements in \eqref{ETK}. Let $J_{r,\sZ}$ be the ideal of $U_{\up,\sZ}(m|n)$ when $R=\sZ$. Base change to $\mathbb Q(\up)$ gives an ideal $J_{r,\mathbb Q(\up)}$ of $U_{\up}(m|n)$. Lemma \ref{reab} (3)\&(4) shows that $I_r\subseteq J_{r,\mathbb Q(\up)}$. On the other hand,
by \cite[Propositions 3.6.1--2]{tk}
these elements in \eqref{J_r}, when regarded as elements in $U_{\up,\sZ}$, are all in $I_r$. Hence, $I_r=J_{r,\mathbb Q(\up)}$. In particular, $J_{r,\sZ}\subseteq I_r\cap U_{\up,\sZ}$. Thus, there is an algebra epimorphism $\bar\pi:\overline{U}_{\up,\sZ}=U_{\up,\sZ}(m|n)/J_{r,\sZ}\to U_{\up,\sZ}/(I_r\cap U_{\up,\sZ})$ with $\bar\pi(\sfe_i)=\tte_i$ etc.. The latter is isomorphic to the image of $U_{\up,\sZ}(m|n)$ in $\overline{U}_{\up,\mathbb Q(\up)}=U_{\up}(m|n)/I_r$.

Now the proof of in  \cite[Theorem 3.12.1]{tk}, especially that given in \cite[Proposition 9.1]{doty2}, shows that the set ${\bf Y}$ in \eqref{ibas} forms a spanning set for $U_{\up,\sZ}/(I_r\cap U_{\up,\sZ})$. Similarly, by replacing $\tte_i, \ttf_i$ etc. by $\sfe_i,\sff_i$ etc., one constructs by  Lemma \ref{reab} a spanning set $\widetilde {\bf Y}$ for $\overline{U}_{\up,\sZ}$. 
Since $\overline{U}_{\up,\sZ}\otimes R\cong \overline{U}_{\up,R}$, it follows that $\widetilde{\bf Y}_R$ spans $\overline{U}_{q,R}$.

On the other hand, since the elements in \eqref{J_r} are all in the kernel of $\eta_{r,\sZ}$, it follows that $J_{r,R} \subseteq \ker\eta_{r,R}$, where $\eta_{r,R}$ is the epimorphism given in 
$\eqref{etaR}$. Consequently, $\eta_{r,R}$ induces an epimorphism $\bar\eta_{r,R}:{\overline U}_{q,R}={ U}_{q,R}/J_{r,R} \rightarrow  { S}_{q,R}.$ Hence, the image $\bar\eta_{r,R}(\widetilde{\bf  Y})$ spans $S_{q,R}$. Since $S_{q,R}$ is $R$-free of rank $|\widetilde{\bf  Y}|$, the transition matrix from $\bar\eta_{r,R}(\widetilde{\bf Y}_R) $ to a basis for $S_{q,R}$ must be invertible. This forces $\bar\eta_{r,R}(\widetilde{\bf Y}_R)$ is linearly independent. Therefore, $\bar\eta_{r,R}$ must be an isomorphism.
\end{proof}

\begin{remark} Both proofs in \cite{tk} and \cite{doty2} for the fact that $\bf Y$ spans $U_{\up,\sZ}/I_r\cap U_{\up,\sZ}$ and $\widetilde{\bf Y}$ spans $\overline{U}_{\up,\sZ}$ use a PBW type basis involving all root vectors. Thus, almost all the commutation formulas in Propositions \ref{ppco} and \ref{cqrv} have to be used in a lengthy case-by-case argument. However, if we use a monomial basis in the divided powers of generators as given in \cite{dp}, the number of cases can be reduced significantly and a complete proof can be seen easily.
\end{remark}



 The following result is a super version of  \cite[Thm 9.3]{dp}.

\begin{corollary}\label{qq-1}
 Assume $m,n\geq r$ and let $\omega\in\La(m|n,r)$ be of the form
$$\omega=(0^a,1^r,0^{m-a-r}|{\bf 0})\text{ or }\omega=({\bf 0}|0^a,1^r,0^{n-a-r}).$$ Then the elements $\ttC_i=1_\omega \tte_{i}\ttf_{i}1_\omega$ in $S_{q,R}(m|n,r)$ for $\omega_i=1$ satisfy the relations 
$$\ttC_i^2=(q^{-1}+q)\ttC_i,\;\; \ttC_i\ttC_j= \ttC_j\ttC_i (|i-j|>1),\;\; \ttC_i\ttC_{i+1}\ttC_i-\ttC_i=\ttC_{i+1}\ttC_{i}\ttC_{i+1}-\ttC_{i+1}.$$
In particular, there is an $R$-algebra isomorphism
$1_\omega S_{q,R}(m|n,r)1_\omega\cong H_{q^2,R}(r)$.
\end{corollary}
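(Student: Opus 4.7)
I would proceed in three steps.

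First, I show that $1_\omega S_{q,R}(m|n,r) 1_\omega \cong H_{q^2,R}(r)$. Since every entry of $\omega^{(0)}$ and $\omega^{(1)}$ lies in $\{0,1\}$, the Young subgroup $\fS_\omega = \fS_{\omega^{(0)}}\times\fS_{\omega^{(1)}}$ is trivial, so $\xy_\omega = 1 \in H_{q^2,R}(r)$ and $\xy_\omega H_{q^2,R}(r) = H_{q^2,R}(r)$. The idempotent $1_\omega$ acts as the identity on this direct summand and as zero on all others, so
\begin{equation*}
1_\omega S_{q,R}(m|n,r)\,1_\omega \;\cong\; \End_{H_{q^2,R}(r)}\bigl(H_{q^2,R}(r)\bigr) \;\cong\; H_{q^2,R}(r)
\end{equation*}
via $\phi\mapsto\phi(1)$. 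This yields the asserted $R$-algebra isomorphism and allows one to verify the $\ttC_i$-relations inside $H_{q^2,R}(r)$ itself.

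Second, I match each $\ttC_i$ with a specific Hecke element. For each $i$ with $\omega_i = \omega_{i+1} = 1$---that is, the Hecke-generator range $a+1\leq i\leq a+r-1$ in the first case and $m+a+1\leq i\leq m+a+r-1$ in the second---the simple root $\alpha_i$ is even, being either even-even (first case) or odd-odd (second case, with parity $\bar 1+\bar 1 = \bar 0$). Consequently the Serre relation (QG5), not (QG6), governs $\tte_i,\ttf_i$, and no super signs intrude. Using (QG3) together with $\ttf_i^2 1_\omega = 0 = \tte_i^2 1_\omega$ (because $\omega\pm 2\alpha_i$ has a negative entry when $\omega_i = \omega_{i+1} = 1$), a short calculation shows that $\ttC_i(1) \in H_{q^2,R}(r)$ equals $q^{-1}(T_{j(i)} + 1)$, where $j(i)$ is the translated position of $\alpha_i$ within the block of $1$'s. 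Alternatively, one may restrict to the corner idempotent $\sum_{\lambda^{(1)} = 0} 1_\lambda$ (first case) or $\sum_{\lambda^{(0)} = 0} 1_\lambda$ (second case) and identify the resulting subalgebra with the classical $q$-Schur algebra $S_{q,R}(m,r)$ or $S_{q,R}(n,r)$---using the sign involution $(-)^\sharp$ of \eqref{sharpH} in the odd case to swap the $y$-symmetriser for the $x$-symmetriser---so that the statement reduces to the classical Dipper--Parshall theorem \cite[Thm 9.3]{dp}, of which this corollary is the super version.

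Third, once the match $\ttC_i \leftrightarrow q^{-1}(T_{j(i)}+1)$ is in hand, all three relations become elementary identities in $H_{q^2,R}(r)$. From $T_j^2 = (q^2-1)T_j + q^2$ one gets $\bigl(q^{-1}(T_j+1)\bigr)^2 = q^{-2}(q^2+1)(T_j+1) = (q+q^{-1})\cdot q^{-1}(T_j+1)$, giving $\ttC_i^2 = (q+q^{-1})\ttC_i$; the commutation $\ttC_i\ttC_k = \ttC_k\ttC_i$ for $|i-k|>1$ is immediate from $T_jT_\ell = T_\ell T_j$ for $|j-\ell|>1$; and the braid-like relation follows from $T_jT_{j+1}T_j = T_{j+1}T_jT_{j+1}$ via the expansion $(T_j+1)(T_{j+1}+1)(T_j+1) - (T_{j+1}+1)(T_j+1)(T_{j+1}+1) = q^2(T_j - T_{j+1}) = q^3\bigl(q^{-1}(T_j+1) - q^{-1}(T_{j+1}+1)\bigr)$, which upon dividing by $q^3$ yields $\ttC_i\ttC_{i+1}\ttC_i - \ttC_{i+1}\ttC_i\ttC_{i+1} = \ttC_i - \ttC_{i+1}$. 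The hard part is the explicit matching in the second step, particularly in the odd case, where one must carefully track the sign involution relating the $y$- and $x$-based $q$-permutation modules to see that the correspondence $\ttC_i \leftrightarrow q^{-1}(T_{j(i)}+1)$ still holds verbatim.
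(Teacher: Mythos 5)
Your Step 1 is a genuinely different and more conceptual route to the final isomorphism than the paper takes. The paper first verifies the three relations for $\ttC_i$ by adapting the computations of \cite[Thm~9.3]{dp} and only then observes that the $t_i = q\ttC_i - 1_\omega$ span a copy of $H_{q^2,R}(r)$; it never invokes the endomorphism-algebra realization of \eqref{Tmnr}. You instead note that $\fS_\omega$ is trivial, so $\xy_\omega = 1$, hence the idempotent truncation $1_\omega S_{q,R}(m|n,r)\,1_\omega = \End_{H_{q^2,R}}(\xy_\omega H_{q^2,R}) = \End_{H_{q^2,R}}(H_{q^2,R}) \cong H_{q^2,R}(r)$ immediately. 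That gives the ``in particular'' claim for free, with no dimension counting and no prior knowledge of the relations; it is a cleaner way to obtain the isomorphism. Your Step 3, the verification of the relations once $\ttC_i$ is matched to $q^{-1}(T_{j(i)}+1)$, is routine and correct.

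The weak point is Step 2, and you rightly call it out yourself. The assertion that $\ttC_i$ corresponds to $q^{-1}(T_{j(i)}+1)$ under the isomorphism of Step 1 is precisely the content of the Du--Parshall computation, and you present it only as ``a short calculation shows,'' supplemented by a hand-wave about reducing to the classical case via corner idempotents and the sign involution $(-)^\sharp$. In particular, for $\omega$ supported in the odd block the even roots $\alpha_i$ ($m < i < m+n$) carry the opposite parameter $\up_i = \up^{-1}$, and the $q$-permutation summand uses the $y$-symmetriser rather than the $x$-symmetriser; your remark about $(-)^\sharp$ points in the right direction ($x_\lambda^\sharp = q^{2\ell(w_{0,\lambda})}y_\lambda$, up to normalization) but is not carried out, and it is exactly here that the paper's footnote notes one lands in $H_{q^{-2},R}(r) \cong H_{q^2,R}(r)$ after the change of variable $t_i' = q^{-1}\ttC_i - 1$. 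So the proof is structurally sound and offers a nicer derivation of the isomorphism, but the explicit identification of the Hecke generators---which both the paper and you ultimately outsource to \cite[Thm 9.3]{dp}---is the part that would have to be written out in full to make either argument self-contained, especially in the odd case.
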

\begin{proof} We may simply modify the proof of \cite[Thm 9.3]{dp} to prove these relations. 
To see the last assertion, let $t_{i}=q\ttC_{i}-1_\omega$  for all $i$ with $\omega_i=1$.
Then $\{t_{i}\mid i\in[1,m+n],\omega_i=1\}$ generate a subalgebra isomorphic to $H_{q^2,R}(r)$ under the map $t_{a+i}\mapsto T_{i}$ (cf. the proof for \cite[Thm 9.3]{dp}).\footnote{If we put $t_{i}'=q^{-1}\ttC_{i}-1$, then the relations for $\ttC_i$ are equivalent to 
$(t'_i)^2=(q^{-2}-1)t'_i+q^{-2},\;\;t'_it'_{i'}=t'_{i'}t'_i,\;\; t'_jt'_{j'}t'_j=t'_{j'}t'_jt'_{j'},$
where $|i-i'|>1$ and $|j-j'|=1$. Then $t'_i$ generate a subalgebra isomorphic to $H_{q^{-2},R}(r)$. }
\end{proof}

\begin{lemma}\label{sigS}
The isomorphism  $\sigma$ considered in \eqref{sigma} and \eqref{intsig} induces  an
$R$-algebra isomorphisms $
\sigma_R:U_{q,R}(\mathfrak{gl}_{n|n})\longrightarrow U_{q,R}(\mathfrak{gl}_{n|n}),$ which further induces an algebra automorphism, by abuse of notation,
$$\sigma_R:S_{q,R}(n|n,r)\longrightarrow S_{q,R}(n|n,r).$$
Moreover, if $m=n\geq r$ and 
$
\omega=(1^r,0^{n-r}|0^n),
\omega'=(0^n|0^{n-r},1^r)\in \Lambda(n|n,r)
$, then the automorphism $\sigma$ restricts to an $R$-algebra isomorphism
\begin{equation}\label{Bsig}
\bar\sigma:1_\omega S_{q,R}(n|n,r)1_\omega\longrightarrow 1_{\omega'} S_{q,R}(n|n,r)1_{\omega'},\;\; t_i\mapsto t_{2n-i} \;\;(1\leq i\leq r-1).
\end{equation}
\end{lemma}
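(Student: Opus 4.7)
My plan follows the three assertions of the lemma.

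First, I would obtain the extension $\sigma_R$ on $U_{q,R}(n|n)$ by base change: since $\sigma$ is already defined on the $\sZ$-form $U_{\up,\sZ}(n|n)$ in \eqref{intsig}, tensoring with $R$ yields $\sigma_R$, whose action on divided-power generators is recorded by \eqref{sigR}.

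Next, to induce an automorphism on $S_{q,R}(n|n,r)$, I plan to invoke the presentation $S_{q,R}(n|n,r)=U_{q,R}(n|n)/J_{r,R}$ from Theorem \ref{kfie} and verify $\sigma_R(J_{r,R})=J_{r,R}$. Introducing the reversal $\la^*=(\la_{2n},\ldots,\la_1)$ on $\La(n|n,r)$, I would use $\sigma(K_a)=K_{2n+1-a}^{-1}$ together with $\up_{2n+1-a}=\up_a^{-1}$ (since $a$ and $2n+1-a$ have opposite parities) to establish by a short direct computation the key identity
\[
\sigma\Big({K_a;c\brack t}_a\Big)={K_{2n+1-a};c\brack t}_{2n+1-a},
\]
which in particular yields $\sigma_R(1_\la)=1_{\la^*}$, so $\sigma_R$ permutes the idempotents indexed by $\La(n|n,r)$. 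Combined with $q_a=q_{2n+1-a}^{-1}$ and $\la_a=\la^*_{2n+1-a}$, each of the three families of generators of $J_{r,R}$ listed in \eqref{J_r} is then sent to another generator of the same form (with $a$ replaced by $2n+1-a$, $\la$ by $\la^*$, and possibly the sign $\pm$ flipped). This gives $\sigma_R(J_{r,R})\subseteq J_{r,R}$, and equality follows since $\sigma_R$ is an automorphism; hence $\sigma_R$ descends to $S_{q,R}(n|n,r)$.

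For the last assertion, note that the reversal sends $\omega=(1^r,0^{n-r}|0^n)$ to $\omega'$, so the idempotent computation above gives $\sigma_R(1_\omega)=1_{\omega'}$ immediately, and $\bar\sigma$ restricts to an $R$-algebra isomorphism between the two corner algebras, both isomorphic to $H_{q^2,R}(r)$ by Corollary \ref{qq-1}. To verify $\bar\sigma(t_i)=t_{2n-i}$ for $1\leq i\leq r-1$, with $t_i=q\ttC_i-1_\omega$ and $\ttC_i=1_\omega\tte_i\ttf_i 1_\omega$, I would apply \eqref{sigR} to obtain
\[
\bar\sigma(\ttC_i)=1_{\omega'}\ttf_{2n-i}\tte_{2n-i}1_{\omega'},
\]
so the remaining task is to swap $\ttf_{2n-i}\tte_{2n-i}$ and $\tte_{2n-i}\ttf_{2n-i}$ inside $1_{\omega'}(\,\cdot\,)1_{\omega'}$. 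Setting $j=2n-i\in[2n-r+1,2n-1]\subset(n,2n)$, both $j$ and $j+1$ exceed $n$, so $\al_j$ is an even simple root and (QG3) gives $[\tte_j,\ttf_j]=(\sfk_j\sfk_{j+1}^{-1}-\sfk_j^{-1}\sfk_{j+1})/(q_j-q_j^{-1})$, whose action on $1_{\omega'}$ equals $[\omega'_j-\omega'_{j+1}]_{q_j}1_{\omega'}=0$ by Lemma \ref{reab}(2), because $\omega'_j=\omega'_{j+1}=1$. Thus $\bar\sigma(\ttC_i)$ coincides with the analogous element $\ttC'_{2n-i}=1_{\omega'}\tte_{2n-i}\ttf_{2n-i}1_{\omega'}$, and $\bar\sigma(t_i)=q\ttC'_{2n-i}-1_{\omega'}=t_{2n-i}$ as claimed.

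The main obstacle I anticipate is the second step: tracking how the various $K$-symbols and the scalars $[{*}\,;{*}]_{q_a}$ transform under $\sigma$, because the automorphism simultaneously inverts each $K_j$ and swaps indices $a\leftrightarrow 2n+1-a$ of opposite parity. Once the identity $\sigma({K_a;c\brack t}_a)={K_{2n+1-a};c\brack t}_{2n+1-a}$ is in hand, both the preservation of $J_{r,R}$ and the clean permutation of idempotents fall out together, and the third step then requires only a one-line application of (QG3).
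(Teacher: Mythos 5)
Your proposal is correct and follows essentially the same route as the paper: base change of the $\sZ$-form automorphism, showing $\sigma_R$ preserves $J_{r,R}=\ker\eta_{r,R}$ (via Theorem \ref{kfie}), and finally using $\sigma_R(1_\omega)=1_{\omega'}$ together with the vanishing of the $K$-commutator on the corner idempotent. You just fill in the details the paper leaves implicit, in particular the identity $\sigma\big({K_a;c\brack t}_a\big)={K_{2n+1-a};c\brack t}_{2n+1-a}$ and the resulting permutation $1_\la\mapsto 1_{\la^*}$.
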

\begin{proof}The first automorphism is induced from \eqref{intsig}, while the second is clear since $\sigma(\ker\eta_{r,R})= \ker\eta_{r,R}$ by  Theorem \ref{kfie}. The last assertion follows easily from the fact that $\sigma(1_\omega)=1_{\omega'}$ and
$1_\omega \tte_{i}\ttf_{i}1_\omega=1_\omega\ttf_{i}\tte_{i}1_\omega$ since
$\frac{\ttk_i\ttk_{i+1}^{-1}-\ttk_i^{-1}\ttk_{i+1}}{\up_i-\up_i^{-1}}1_\omega=0$.
\end{proof}
This result shows that the automorphism given in \eqref{barsig} agrees with the automorhism  $\sigma$ above.

%
%
%
\section{Finite dimensional weight supermodules of $U_{q,F}(m|n)$}
{\it From now on, we will assume $R=F$ is a field of characteristic $\neq2$ and $q\in F$ is an $l'$th primitive root of unity with $l'\geq3$. By setting $l'=\infty$, we may also include the case where $q$ is not a root of unity.} 
We first describe a classification of the irreducible weight supermodules of ${ U}_{q,F}=U_{q,F}(m|n)$ by their highest weights. We then use the result in the previous section to give a criterion for polynomial weight $U_{q,F}$-supermodules.
 
For a  ${ U}_{q,F}$-supermodule $V$ and
$\lambda\in \mathbb Z^{m+n}$, define its (nonzero)
$\la$-weight space (of type {\bf 1}) by
\begin{equation}\label{wt space}
V_\lambda=
\{ v\in V \,|\, K_a.v=q_a^{\lambda_a}v, \ {K_a;c\brack
t}.v={\lambda_a+c\brack t}_qv ,\  \forall \, t\in\mathbb N, c\in
\mathbb Z \}.
\end{equation}
Note that, by using ${{K_{b}^{-1}; c}\brack   t}= (-1)^t{{K_{b}; -(c+1)+t} \brack   t}$
and \cite[Lemma 14.18]{ddp}, we deduce from \eqref{wt space} that, for $v\in V_\la$,  
\begin{equation}\label{wt space1}
{K_a^{-1};c\brack t}.v={-\la_a+c\brack t}_q.v,\quad {K_{a,b};c\brack t}.v={\la_a-(-1)^{\bar a+\bar b}\la_b+c\brack t}_q.v
\end{equation}
For example, as a quotient of $U_{q,F}$, $V=S_{q,F}$ is a ${ U}_{q,F}$-supermodule. By Lemma \ref{reab}(2), the $\la$-weight space $V_\la=1_\la S_{q,F}$.

Define the partial ordering $\leq$ on $\mathbb Z^{m+n}$ by setting $\mu\leq\lambda$
if and only if $\lambda-\mu$ is a nonnegative sum of simple roots $\alpha_i.$ Denote $\wt(v)=\lambda $ for  $v\in V_\lambda$ and let $\wt(v)_h$ be the $h$th component of $\wt(v)$.

\begin{lemma}
For a ${ U}_{q,F}$-supermodule $V$, $\lambda\in  \mathbb Z^{m+n}$, and $\alpha\in\Phi$, we have
\begin{equation}\label{wei}
E_{\alpha}^{(M)} V_\lambda \subseteq V_{\lambda+M\alpha}.
\end{equation}
 In particular, for $0\neq v\in V_\lambda$ 
and $h\in[1,m+n)$, if $E_{b,a}^{(M)}.v\neq0$ for some 
$a<b$ and $M>0$, then
\begin{equation}\label{cmax}
\wt(v)>\wt(E_{b,a}^{(M)}.v),\quad \wt(v)_h\leq\wt(E_{b,a}^{(M)}.v)_h\;\; (\forall h\neq a).
\end{equation}
\end{lemma}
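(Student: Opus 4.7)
The plan is to reduce everything to commutation with $E_{b,c}^{(M)}$ using the relations supplied in \S 2. First, for any root $\alpha=\epsilon_b-\epsilon_c$ and $v\in V_\lambda$, set $w=E_{b,c}^{(M)}v$. Proposition \ref{KE} gives $K_aE_{b,c}=q_a^{\varepsilon_a}E_{b,c}K_a$, where $\varepsilon_a$ equals $+1$, $-1$, or $0$ according as $a=b$, $a=c$, or neither; iterating the divided power yields $K_aE_{b,c}^{(M)}=q_a^{M\varepsilon_a}E_{b,c}^{(M)}K_a$, and hence $K_aw=q_a^{\lambda_a+M\varepsilon_a}w=q_a^{(\lambda+M\alpha)_a}w$, which is precisely the scalar prescribed by the weight $\lambda+M\alpha$.

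Next I would handle the bracket operators ${K_a;c'\brack t}$. For $a\neq b,c$ the commutation with $E_{b,c}^{(M)}$ is trivial, giving ${K_a;c'\brack t}w={\lambda_a+c'\brack t}_q w$. For $a=b$ or $a=c$ I would use the $c'$-shifted version of \eqref{rcom1},
$$
E_{b,c}^{(M)}{K_b;c'\brack t}={K_b;c'-M\brack t}E_{b,c}^{(M)},\qquad
E_{b,c}^{(M)}{K_c;c'\brack t}={K_c;c'+M\brack t}E_{b,c}^{(M)},
$$
which follow from the same induction on $M$ that produces \eqref{rcom1}. Rearranging the first gives
$$
{K_b;c'\brack t}w=E_{b,c}^{(M)}{K_b;c'+M\brack t}v={\lambda_b+M+c'\brack t}_q w,
$$
and similarly the $K_c$-bracket scales by ${\lambda_c-M+c'\brack t}_q$; these are exactly the scalars dictated by the weight $\lambda+M\alpha$. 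This proves $w\in V_{\lambda+M\alpha}$.

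For the ``in particular'' statement I would specialise to the negative root $\alpha=\epsilon_b-\epsilon_a$ with $a<b$, so that $w=E_{b,a}^{(M)}v\neq 0$ lies in $V_{\lambda+M(\epsilon_b-\epsilon_a)}$ by what has just been shown, i.e.\ $\wt(w)=\lambda-M\alpha_{a,b}$. Since $\alpha_{a,b}=\alpha_a+\alpha_{a+1}+\cdots+\alpha_{b-1}$ is a nonempty sum of simple roots and $M>0$, the difference $\wt(v)-\wt(w)=M\alpha_{a,b}$ is a strictly positive integer combination of simple roots, and hence $\wt(v)>\wt(w)$ in the partial order defined before the lemma. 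Inspection of $\wt(w)=\lambda+M\epsilon_b-M\epsilon_a$ shows that $\wt(w)_h-\wt(v)_h$ equals $+M$ at $h=b$, $-M$ at $h=a$, and $0$ at every other $h$; in particular $\wt(w)_h\geq\wt(v)_h$ for all $h\neq a$, which is the second asserted inequality.

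No substantive obstacle is anticipated: the lemma is essentially a bookkeeping consequence of Proposition \ref{KE} together with a routine extension of \eqref{rcom1} to arbitrary $c'\in\mathbb Z$. The only mild point of care is choosing the correct sign of the shift $\pm M$ between the $K_b$- and $K_c$-brackets, which the commutation relations dictate unambiguously.
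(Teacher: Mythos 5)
Your proof is correct and matches the paper's (very terse) argument: the paper simply asserts that the first assertion "follows easily from \eqref{rcom1}" and then derives the weight comparison from $\wt(E_{b,a}^{(M)}.v)=\wt(v)-M(\epsilon_a-\epsilon_b)$, exactly as you do. Your spelled-out version — Proposition \ref{KE} for the $K_a$'s and the $c'$-shifted extension of \eqref{rcom1} for the brackets — is the intended content of that remark, and your sign bookkeeping checks out.
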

\begin{proof}The first assertion follows easily from (\ref{rcom1}). Suppose now $E_{b,a}^{(M)}.v\neq 0$.
By \eqref{wei}, 
$\wt(E_{b,a}^{(M)}.v)=\wt(v)-M({\epsilon_a}-{\epsilon_{b}})<\wt(v),$
and $\wt(E_{b,a}^{(M)}.v)_h\geq \wt(v)_h$ whenever $h\neq a$.
\end{proof}

Call    $\lambda=\sum_{i=1}^{m+n}\lambda_i{ \epsilon_i}$ ($\lambda_i\in \mathbb Z$)
 a {\it weight} of $V$ if $V_\la\neq0$ and denote by $\pi(V)=\{\mu\in \mathbb Z^{m+n} \,|\, V_\mu\neq 0\}$ the set of weights of $V$. By \eqref{wei}, $\bigoplus_{\lambda\in \pi(V)} V_\lambda$ is a submodule of $V$. If
$V=\bigoplus_{\lambda\in \pi(V)} V_\lambda$, we call $V$ a {\it weight supermodule} (of type {\bf 1}).  
For example, the natural supermodule $V(m|n)$ and its tensor product $V(m|n)^{\otimes r}$ are weight supermodules with 
$\pi(V(m|n)^{\otimes r})= \Lambda(m|n,r)$. 

For every weight supermodule $V=\bigoplus_{\lambda\in \pi(V)} V_\lambda$, we may change its superspace structure to get a standard one $V^\sfs$ associated with $V$, where 
$$V^\sfs=V \text{ as a $U_{q,F}$-module, but }V^\sfs_{i} =\bigoplus_{\overline{|\mu^{(1)}|}\equiv i}V_\mu\;(i\in\mathbb Z_2).$$
Clearly, $V^\sfs$ is a weight supermodule.

\begin{lemma}\label{V^s} For every weight $U_{q,F}$-supermodule $V$, there is a supermodule isomorphism $V\cong V^\sfs$.
\end{lemma}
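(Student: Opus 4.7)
The plan is to decompose $V$ into $U_{q,F}$-sub-supermodules according to whether the original $\mathbb{Z}_2$-parity on each weight space matches the ``standard'' parity $\overline{|\mu^{(1)}|}$, then exhibit the isomorphism piecewise. Observe first that each weight space $V_\mu$ is automatically a $\mathbb{Z}_2$-graded subspace of $V$, because it is cut out by eigenvalue conditions involving only the even operators $K_a^{\pm1}$ and ${K_a;c\brack t}$; hence $V_\mu=(V_\mu)_{\bar0}\oplus(V_\mu)_{\bar1}$ for every $\mu\in\pi(V)$.

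The first technical step is a parity-matching identity: for any root $\alpha=\epsilon_a-\epsilon_b$, the parity $\bar E_{a,b}=\bar a+\bar b$ of the root vector equals $\overline{|(\mu+\alpha)^{(1)}|}-\overline{|\mu^{(1)}|}$ in $\mathbb{Z}_2$ (a short check, noting that $\alpha_a$ is odd precisely when $a=m$, which is the only odd simple root). Combined with \eqref{wei} and extended by induction over monomials in divided powers, this shows that the condition ``the parity of $v\in V_\mu$ agrees with $\overline{|\mu^{(1)}|}$'' is preserved under the full $U_{q,F}$-action, and so is the opposite condition. Consequently, setting
\begin{equation*}
V^{+}:=\bigoplus_{\mu\in\pi(V)}(V_\mu)_{\overline{|\mu^{(1)}|}},\qquad V^{-}:=\bigoplus_{\mu\in\pi(V)}(V_\mu)_{\overline{|\mu^{(1)}|}+\bar1},
\end{equation*}
both are $U_{q,F}$-sub-supermodules of $V$ with $V=V^+\oplus V^-$.

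On $V^+$ the original grading already coincides with the standard grading of $V^\sfs$, so the identity gives an even $U_{q,F}$-linear isomorphism $V^+\to(V^+)^\sfs$; on $V^-$ the original grading is the parity shift of the standard one, and the identity gives an odd $U_{q,F}$-linear bijection $V^-\to(V^-)^\sfs$. In the super-categorical convention used throughout this paper (cf.\ \cite{bru}), where a supermodule isomorphism is a $\mathbb{Z}_2$-graded $U_{q,F}$-linear bijection (allowed to have both even and odd components), assembling these two maps produces the required isomorphism $V\cong V^\sfs$. The main obstacle in this plan is the parity-matching identity above; once that is in place, the decomposition and the pieced-together isomorphism are formal, and the essential content reduces to the observation that the parity on each $U_{q,F}$-indecomposable summand of $V$ is determined up to a single global parity shift by the weights.
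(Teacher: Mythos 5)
Your proof is correct and follows essentially the same route as the paper's: decompose $V$ into the parity-matching and parity-mismatching parts of its weight spaces (your $V^+,V^-$ are the paper's $V_1,V_2$), note each is a subsupermodule, and glue an even identity on one piece with an odd identity on the other. The only difference is cosmetic: you spell out the parity-matching identity $\bar\alpha\equiv\overline{|(\mu+\alpha)^{(1)}|}-\overline{|\mu^{(1)}|}$ that justifies invariance of $V^\pm$, which the paper passes over with a ``Clearly.''
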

\begin{proof} Since the parity function $\delta_V:v\mapsto(-1)^{\bar v}v$ on the superspace $V$ stabilises every weight space $V_\mu$, it follows that $V_\mu=(V_\mu)_{\bar0}\oplus (V_\mu)_{\bar1}$. Putting $\pi(V)_i=\{\mu\in\pi(V)\mid \overline{|\mu^{(1)}|}=i\}$, we have
$$V=\bigoplus_{\mu\in\pi(V)}\big((V_\mu)_{\bar0}\oplus (V_\mu)_{\bar1}\big)=V_1\oplus V_2,$$
where $V_1=\bigoplus_{\mu\in\pi(V)_{\bar0}}(V_\mu)_{\bar0}\oplus \bigoplus_{\mu\in\pi(V)_{\bar1}}(V_\mu)_{\bar1}$ and $V_2=\bigoplus_{\mu\in\pi(V)_{\bar0}}(V_\mu)_{\bar1}\oplus \bigoplus_{\mu\in\pi(V)_{\bar1}}(V_\mu)_{\bar0}$.
Clearly, both $V_1$ and $V_2$ are subsupermodules and $V_1\cong V_1^\sfs$ and $V_2\cong \Pi(V_2^\sfs)$, where $\Pi$ is the parity functor. Thus, $V\cong V_1\oplus \Pi(V_2)\cong V_1^\sfs\oplus V_2^\sfs=V^\sfs$, as desired.
\end{proof}


We call a nonzero weight vector $\fkm_\lambda$  a {\it maximal vector} if it
satisfies
$$
E_{i}^{(M)}.\fkm_\lambda=0,\quad   \mbox { for }1\leq i\leq m+n-1\mbox { and } M>0.
$$
Call $V$ a {\it highest weight supermodule} if it is generated by a maximal vector.

Let\footnote{The set $\mathbb Z^{m|n}_\pp$ is denoted by $X^+(T)$ in \cite[p.23]{bru}. Also, the notation $\Lambda^\pp(m|n,r)$ there has a different meaning; see footnote 6 below.}
\begin{equation}\label{La++}
\aligned
\mathbb Z^{m|n}_\pp&=\{\la\in\mathbb Z^{m+n}\mid \lambda_1\geq\cdots\geq\lambda_m,\lambda_{m+1}\geq\cdots\geq\lambda_{m+n}\},\\
 \Lambda^\pp(m|n)&= \Lambda(m|n)\cap\mathbb Z^{m|n}_\pp,\quad \Lambda^\pp(m|n,r)= \Lambda^\pp(m|n)\cap\Lambda(m|n,r).\\
  \endaligned
  \end{equation} 
For $\la\in \mathbb Z^{m|n}_\pp$, let $F_\la$ be a one-dimensional $U_{q,F}^0$-module of weight $\la$. By inflating $F_\la$ to a ${U}_{q,F}^{\geq0}$-supermodule and then inducing to $U_{q,F}$, we obtain the induced supermodule or Verma supermodule  $Y(\lambda)={ U}_{q,F}\otimes_{{U}_{q,F}^{\geq0}} F_\lambda.$ 

Similar to the non-super case (see \cite[\S\S6.1,6.2]{Lu}), 
$Y(\lambda)$ is a highest weight supermodule with highest
weight $\lambda,$ and $\dim Y(\la)_\la=1$. Thus, every proper submodule of $Y(\la)$ is contained in the subspace $\bigoplus_{\mu<\la}Y(\la)_\mu$. Hence, $Y(\la)$ has a unique maximal super submodule and hence, a unique irreducible quotient $L(\la)$.

We remark that the irreducible supermodule $L(\la)$ can also be constructed through Kac modules, cf.  \cite[Section III]{zrb}\footnote{Note that the quantum supergroup at a root of unity in \cite{zrb} is not the quantum hyperalgebra $U_{q, F}(m|n)$ here. Compare \cite[\S\S11.1,11.2]{cp}.}
and \cite[Thm 4.6]{bru}.
\begin{proposition}\label{ciir}In the category $U_{q,F}$-{\bf mod} of finite dimensional weight $U_{q,F}(m|n)$-supermodules,
the set $\{L(\lambda)\mid \lambda\in  \mathbb Z^{m|n}_\pp\}$
forms a complete set of irreducible objects.
\end{proposition}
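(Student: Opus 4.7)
The plan is to follow the standard highest-weight strategy adapted to the super-hyperalgebra setting, in three steps: (i) every irreducible $V$ in the category has a maximal vector, so $V\cong L(\lambda)$ for some $\lambda$; (ii) such a highest weight $\lambda$ must satisfy $\lambda\in\mathbb Z^{m|n}_\pp$; (iii) for every $\lambda\in\mathbb Z^{m|n}_\pp$, the module $L(\lambda)$ is finite dimensional. Pairwise non-isomorphism of the family is then immediate, since $\lambda$ is recovered from $L(\lambda)$ as its unique highest weight, with $\dim L(\lambda)_\lambda=1$.

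For (i), let $V$ be irreducible and finite dimensional. Finiteness of $\pi(V)\subset\mathbb Z^{m+n}$ gives a maximal weight $\lambda$ with respect to $\leq$; each weight space $V_\mu$ is automatically $\mathbb Z_2$-graded since the Cartan-type generators $K_a^{\pm1}$ and ${K_a\brack t}$ are all even, so we may pick a homogeneous $0\neq v_\lambda\in V_\lambda$. By \eqref{wei}, $E_i^{(M)}.v_\lambda\in V_{\lambda+M\alpha_i}=0$ for all $i$ and $M\geq 1$, so $v_\lambda$ is a maximal vector. The universal property of the Verma supermodule $Y(\lambda)$ yields a surjection $Y(\lambda)\twoheadrightarrow V$, whence $V\cong L(\lambda)$ by irreducibility.

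For (ii), I would introduce the two even Lusztig subalgebras $U^{(0)},U^{(1)}\subseteq U_{q,F}(m|n)$ generated respectively by $\{E_i^{(M)}, F_i^{(M)}, K_j^{\pm1}, {K_j\brack t}:1\leq i<m,\ 1\leq j\leq m\}$ and by the analogues with $m<i<m+n$ and $m<j\leq m+n$. Every even positive root lies in exactly one of these blocks, and the defining relations of $U_{\up,\sZ}(m|n)$ restrict to the standard Lusztig relations on these generators without interference from the odd simple root $\alpha_m$; moreover the generators of $U^{(0)}$ and $U^{(1)}$ commute inside $U_{q,F}(m|n)$ by Proposition \ref{KE} and (QG1)--(QG5). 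Hence $U^{(0)}\otimes U^{(1)}$ is isomorphic to the non-super Lusztig hyperalgebra of $\mathfrak{gl}_m\oplus\mathfrak{gl}_n$. The cyclic submodule $U^{(0)}.v_\lambda\subseteq L(\lambda)$ is then a finite dimensional highest weight $U^{(0)}$-module, so by Lusztig's classification \cite{Lu} its highest weight is dominant: $\lambda_1\geq\cdots\geq\lambda_m$. The parallel argument for $U^{(1)}$ yields $\lambda_{m+1}\geq\cdots\geq\lambda_{m+n}$, so $\lambda\in\mathbb Z^{m|n}_\pp$.

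For (iii), I would realise $L(\lambda)$ as the irreducible head of a Kac supermodule. Let $P=U^{(0)}U^{(1)}U^+_{\text{odd}}\subseteq U_{q,F}$, where $U^+_{\text{odd}}$ denotes the subalgebra generated by divided powers of the odd positive root vectors $E_{a,b}^{(M)}$ with $a\leq m<b$. By (ii) and Lusztig's existence theorem, there is a finite dimensional irreducible $U^{(0)}\otimes U^{(1)}$-module $L^0(\lambda)$ of highest weight $\lambda$; inflate it to a $P$-module by letting $U^+_{\text{odd}}$ act as zero (well-defined because conjugation of an odd positive root vector by a Levi generator remains an odd positive root vector, by Propositions \ref{ppco} and \ref{cqrv}), and form $K(\lambda)=U_{q,F}\otimes_P L^0(\lambda)$. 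The nilpotence $E_{a,b}^2=0$ for odd roots (Proposition \ref{ppco}(0)) together with the PBW basis \eqref{ubas} yields $\dim K(\lambda)=2^{mn}\dim L^0(\lambda)<\infty$; since $L(\lambda)$ is the unique irreducible quotient of $K(\lambda)$ (it being a nonzero quotient of $Y(\lambda)$), it too is finite dimensional. The main obstacle I anticipate lies precisely here: the Kac-module references \cite{zrb,bru} treat closely related but not literally identical settings, and one must carefully verify that the Lusztig $\mathcal Z$-form at a root of unity admits the triangular factorisation $U_{q,F}\cong U^-_{\text{odd}}\otimes_F P$ uniformly in $q$, so that the $2^{mn}$-bound on $\dim K(\lambda)$ survives specialisation without collapse.
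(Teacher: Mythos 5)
Your proposal matches the paper's argument for the main direction: take a maximal weight $\lambda$ of an irreducible finite dimensional weight module, observe the corresponding vector is maximal, conclude $L\cong L(\lambda)$, and then restrict to the even Lusztig subalgebra $U_{\bar 0}\cong U_{q,F}(\mathfrak{gl}_m\oplus\mathfrak{gl}_n)$ and invoke Lusztig's classification to get $\lambda\in\mathbb Z^{m|n}_\pp$. (The paper uses the whole even part at once rather than your explicit splitting into $U^{(0)}\otimes U^{(1)}$, but this is cosmetic.)

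Where you go further is step (iii): the paper's proof simply omits the verification that $L(\lambda)$ is actually finite dimensional for every $\lambda\in\mathbb Z^{m|n}_\pp$, relying instead on the remark just before the proposition (``$L(\lambda)$ can also be constructed through Kac modules, cf.\ \cite{zrb}, \cite{bru}''). Your Kac-module argument makes this explicit, and the $2^{mn}$-bound is the right mechanism. The obstacle you flag --- that one must verify the triangular factorisation $U_{q,F}\cong U^-_{\text{odd}}\otimes_F P$ over the Lusztig $\mathcal Z$-form, uniformly in $q$ --- is the correct point to worry about, but it is fillable with the tools already in the paper: applying the anti-automorphism $\Upsilon$ of \eqref{Up} to the integral PBW basis \eqref{ubas} gives a basis of the form $F_A\,\prod_a K_a^{\sigma_a}{K_a\brack\mu_a}\,E_A$, and since the ordering on $\Phi^-$ in \eqref{E_A} is arbitrary, one may put the $mn$ odd negative roots first in $F_A$. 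This exhibits $U_{q,F}$ as a free right $P$-module on the $2^{mn}$ squarefree odd-negative monomials, so $\dim K(\lambda)=2^{mn}\dim L^0(\lambda)$ survives specialisation and $L(\lambda)$, as a quotient of $K(\lambda)$, is finite dimensional. One also needs, as you say, that the inflation of $L^0(\lambda)$ along $U^+_{\text{odd}}\mapsto 0$ is well defined; this follows from the commutation formulas in Propositions \ref{ppco} and \ref{cqrv}, which show that conjugating an odd positive root vector by a Levi generator stays in the ideal of $P$ generated by the odd positive part. So your proposal is correct and in fact slightly more complete than the paper's own proof, which leaves finite-dimensionality implicit.
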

\begin{proof}Suppose $L$ is an irreducible weight $U_{q,F}$-supermodule. Then $L=\bigoplus_{\mu\in\pi(L)} L_\mu$. Let $\la\in\pi(L)$ be a maximal weight and $0\neq v\in L_\la$. We must have $L=U_{q,F}v$. Hence, $L$ is a highest weight module. Thus, if $U_{\bar 0}\cong U_{q,F}(\mathfrak{gl}_m\oplus\mathfrak{gl}_n)$ is the even subalgebra of $U_{q,F}(m|n)$, then $U_{\bar 0}v$ is a highest weight module. Hence, it has a unique irreducible quotient of highest weight $\la$. Hence, $\la \in  \mathbb Z^{m|n}_\pp$. This completes the proof.
\end{proof}

{\it From now on, unless otherwise stated, we assume every weight supermodule is finite dimensional.}
Let $V=\bigoplus_{\lambda\in \mathbb Z^{m+n}} V_\lambda$ be a finite dimensional  weight $U_{q,F}$-supermodule. Then, for any $r\in\mathbb Z$,
$V_r=\oplus_{|\mu|=r} V_\mu$ is a subsupermodule by \eqref{wei}. If $V=V_r$, then $V$ is called a $U_{q,F}$-supermodule of {\it degree} $r$.
We call $V$  a  {\it polynomial supermodule} of ${ U}_{q,F}$ if $V$ is a weight module with
$\pi(V)\subseteq \Lambda(m|n).$    Since, for a polynomial supermodule $V$ we have
$V=\oplus_{r\geq 0} V_r,$ we need only consider $V_r,$
which is called a polynomial supermodule of degree $r$. Unlike the nonsuper case, we will see in the next section that only a subset of $\Lambda^\pp(m|n)$ labels all polynomial irreducible $U_{q,F}$-supermodules.

We now describe a vanishing ideal for all polynomial $U_{q,F}$-supermodules of degree $r$.
Recall the algebra epimorphism $\eta_{r,F}$ in \eqref{etaR} and its kernel in Theorem \ref{kfie}.

%
%
\begin{proposition}\label{cpol} Let $V$ a polynomial $U_{q,F}$-supermodule and $r>0$. Then
$V=V_r$ if and only if
$\ker(\eta_{r,F}).V= 0$. Hence, every polynomial $U_{q,F}(m|n)$-supermodule of degree $r$ is an inflation of a $S_{q,F}(m|n,r)$-supermodule.
\end{proposition}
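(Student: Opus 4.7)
The plan is to invoke Theorem \ref{kfie} to identify $\ker(\eta_{r,F})$ with the explicit ideal $J_{r,F}$ generated by the elements in \eqref{J_r}, and to handle the two implications separately, exploiting the behaviour of the Gaussian elements ${K\brack\lambda}$ on weight spaces.

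For the forward direction, suppose $V=V_r$, so $V=\bigoplus_{\mu\in\Lambda(m|n,r)}V_\mu$. It suffices to show that each of the three families of generators of $J_{r,F}$ annihilates every weight vector $v\in V_\mu$, since the two-sided ideal they generate will then act as zero. The crucial computation is that for $v\in V_\mu$ and $\lambda\in\Lambda(m|n,r)$, the defining condition \eqref{wt space} yields
\[
{K\brack\lambda}.v \;=\; \prod_{a=1}^{m+n}{\mu_a\brack\lambda_a}_q\, v \;=\; {\mu\brack\lambda}_q\, v.
\]
Since ${x\brack\lambda_a}$ vanishes on nonnegative integers $x<\lambda_a$ and $|\mu|=|\lambda|=r$, the product ${\mu\brack\lambda}_q$ is nonzero precisely when $\lambda=\mu$, in which case it equals $1$. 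Hence $\sum_\lambda{K\brack\lambda}.v=v$, handling the first family of generators, while the scalar identities $q_a^{\mu_a}=q_a^{\lambda_a}$ and ${\mu_a+c\brack t}_q={\lambda_a+c\brack t}_q$ at $\lambda=\mu$ dispose of the remaining two families.

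Conversely, assume $\ker(\eta_{r,F}).V=0$. Then the $U_{q,F}$-action on $V$ factors through the quotient $U_{q,F}/J_{r,F}\cong S_{q,F}(m|n,r)$, making $V$ into an $S_{q,F}(m|n,r)$-supermodule. Inside $S_{q,F}$, Lemma \ref{reab}(1)--(2) supplies the orthogonal idempotent decomposition $1=\sum_{\lambda\in\Lambda(m|n,r)}1_\lambda$, and for each $v\in V$ the element $w:=1_\lambda.v$ satisfies $K_a^{\pm 1}.w=q_a^{\pm\lambda_a}w$ and ${K_a;c\brack t}.w={\lambda_a+c\brack t}_q w$; hence $w\in V_\lambda$ by \eqref{wt space}. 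Consequently $V=\sum_{\lambda\in\Lambda(m|n,r)}V_\lambda\subseteq V_r$, and equality follows. The final sentence of the proposition is then immediate: if $V=V_r$ is polynomial of degree $r$, then $\ker(\eta_{r,F})$ acts trivially, so the action descends along $\eta_{r,F}$ to an $S_{q,F}(m|n,r)$-supermodule structure of which the original $U_{q,F}$-action is the inflation.

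The whole argument is essentially bookkeeping once Theorem \ref{kfie} and Lemma \ref{reab} are in hand; the only genuinely substantive point is the off-diagonal vanishing of ${\mu\brack\lambda}_q$ on pairs in $\Lambda(m|n,r)$, which is uniform at every specialisation $q\in F$ (including roots of unity) because $\mu_a,\lambda_a$ are nonnegative integers and the Gaussian polynomial ${x\brack\lambda_a}$ already vanishes formally on integers $x<\lambda_a$, so no separate treatment of the root-of-unity case is needed.
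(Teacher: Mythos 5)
Your proposal is correct and follows essentially the same route as the paper: both reduce to checking the three families of generators of $J_{r,F}$ on weight vectors via the identity ${\mu\brack\lambda}_q=\delta_{\mu\lambda}$ for $\mu,\lambda\in\Lambda(m|n,r)$, and both obtain the converse by passing the action through $S_{q,F}(m|n,r)$, whose weights all lie in $\Lambda(m|n,r)$. The only difference is expository: you spell out the converse via the idempotent decomposition $1=\sum_\lambda 1_\lambda$ (note the orthogonality there is Lemma~\ref{reab}(5), not (2)), whereas the paper simply states this direction is clear.
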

\begin{proof}
The sufficiency is clear since every ${ S}_{q,F}$-supermodule has its weights in $\La(m|n,r)$. Suppose now $V$ is a polynomial $U_{q,F}$-supermodule of degree $r$.
To prove  $\ker(\eta_{r,F}).V= 0,$ by Theorem \ref{kfie},
it is sufficient to verify  every element in \eqref{J_r} vanishes $V$.

Choose $0\neq m_\mu\in V_\mu $  with  $\mu\in \Lambda(m|n,r).$ Then, for $a\in [1,m+n]$,
\begin{eqnarray*}
(1)&&(1-\sum_{\lambda\in\Lambda(m|n,r)} {K\brack\lambda}). m_\mu
=(1-\sum_{\lambda\in\Lambda(m|n,r)} {\mu\brack \lambda}_q). m_\mu
\quad (\mbox{Recall }  {\mu\brack \lambda}_q=\prod_{i=1}^{m+n} {\mu_i\brack \lambda_i}_q.)\\
&&=(1- {\lambda\brack \lambda}_q). m_\mu=0
\qquad\qquad (\mbox{since } {\mu\brack \lambda}_q=\delta_{\mu\lambda}).
\end{eqnarray*}
\begin{eqnarray*}
(2)&&(K_a^{\pm 1} {K\brack\lambda}-q_a^{\pm \lambda_a}{K\brack\lambda}). m_\mu
 =(K_a^{\pm 1} -q_a^{\pm \lambda_a}){K\brack\lambda}. m_\mu
=(K_a^{\pm 1} -q_a^{\pm \lambda_a}){\mu\brack \lambda}_q. m_\mu\\
&&= (q_a^{\pm \mu_a} -q_a^{\pm \lambda_a}){\mu\brack \lambda}_q. m_\mu  
= 0 \qquad\qquad (\mbox{since } {\mu\brack \lambda}_q=\delta_{\mu\lambda}).
\end{eqnarray*}
The proof of $({K_a;c\brack t}{K\brack\lambda} -{\lambda_a+c\brack t}_q {K\brack\lambda}).m_\mu=0$
is similar as (2).
\end{proof}
\begin{remark}
By this proposition,  the full subcategory of finite dimensional polynomial $U_{q,F}(m|n)$-supermodules of degree $r$  is equivalent to the
category of finite dimensional supermodules over the $q$-Schur  superalgebra $
S_{q,F}(m|n,r).$

Rui and the first author (\cite[Thm 9.8]{dr}) showed that
$
{ S}_{q,F}(m|n,r) \cong  { A}_F(m|n,r)^*,
$
where ${ A}_F(m|n,r)$ is the $r$th homogenous component of the quantum matrix
superalgebra. 
Hence, one may also follow Green's original definition in \cite{Gr} to define polynomial $U_{q,F}(m|n)$-supermodules through $A_F(m|n,r)$-cosupermodules.
\end{remark}


%
%
%
%
%
%

%
%
%
\section{Polynomial irreducible ${ U}_{q,F}(m|n)$-supermodules}\label{sec}
Throughout the section, $F$ denotes a field of characteristic $\neq2$ and $q\in F$. If $q$ is an $l'$th primitive root of
unity in $F$, let 
$$l=\begin{cases} l',&\text{ if $l'$ is odd};\\
\frac{l'}2,&\text{ if $l'$ is even.}
\end{cases}$$
In this case, $q^2$ is an $l$-th primitive root of unity. If $q$ is not a root of unity, then we set $l=\infty$.
As before, we use the abbreviation $U_{q,F},S_{q,F}$ for $U_{q,F}(m|n),S_{q,F}(m|n,r)$, respectively.


Let   $\Lambda^+$ be the set of all partitions.  Following  \cite[Lem. 6.2]{bru} or \cite[Lem. 1]{xu}, we first define the map
\begin{equation}\label{j_l}
j_l: \  \Lambda^+ \rightarrow \mathbb N.
\end{equation}

For $\lambda\in  \Lambda^+$ of length $\ell(\la)=d$ (i.e., $\lambda_{d+1}=0$ and $\lambda_d\neq0$), let $x_{d+1}=x_{d+2}=\cdots =0$ and define $x_d, x_{d-1},\cdots x_1\in \{0,1\}$ recursively
by setting
\begin{equation}\label{x_i}
x_i= \left\{ \begin{array}{lll}
             1, & \mbox{  if  }  \lambda_i+x_{i+1}+x_{i+2}+\cdots \not\equiv 0\quad   (\!\!\!\!\!\mod l);\\
            0, &   \mbox{  if  }  \lambda_i+x_{i+1}+x_{i+2}+\cdots \equiv  0\quad   (\!\!\!\!\!\mod l).
           \end{array}
         \right.
\end{equation}
Let
$$j_l(\lambda)=x_1+x_2+\cdots=\sum_{i=1}^d x_i.$$
Then $j_l(\lambda)\leq d$.
It is clear from the definition that, if $l=\infty$ (i.e., $q$ is not a root of unity), then $j_\infty(\lambda)=d$.
In general, there exists subsequence $1\leq i_1<\cdots<i_t\leq d$, where $t=j_l(\lambda)$, such that
$x_{i_1}+\cdots+x_{i_t}=t$ and $j_l(\lambda_{i_1},\ldots,\lambda_{i_t})=t$.

The map $j_l$ is closely related to Xu's algorithm for computing the Mullineux map; see \S8 below, \cite[\S6]{bru}, and Remark \ref{Xu}(2).

\begin{lemma}\label{mull}For $\lambda\in  \La^+,$
if $1\leq i_1< i_2\cdots <i_t\leq\ell(\la), $
then
$j_l(\lambda_{i_1},\lambda_{i_2},\cdots, \lambda_{i_t})\leq j_l(\lambda).$
In particular,
$j_l(\lambda_{i_1},\lambda_{i_2},\cdots, \lambda_{i_t})=t$  if and only if
$$
\prod_{s=1}^{t}{\lambda_{i_{s}}+t-s \brack 1}_q\neq 0.
$$
\end{lemma}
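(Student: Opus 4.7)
The plan is to prove the ``in particular'' equivalence first, since it follows directly from the recursive definition~\eqref{x_i}, and then use it together with a maximum-length characterization of $j_l(\la)$ to deduce the monotonicity assertion.

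For the iff, I would start by noting that under our conventions on $l$ and on $q$, the Gaussian factor ${n\brack 1}_q=[n]_q$ vanishes precisely when $l\mid n$. Setting $\mu=(\lambda_{i_1},\ldots,\lambda_{i_t})$ and running the recursion~\eqref{x_i} on $\mu$ from $s=t$ downward, a straightforward downward induction shows that $x_s=x_{s+1}=\cdots=x_t=1$ iff $l\nmid \lambda_{i_{s'}}+(t-s')$ for every $s'\in\{s,\ldots,t\}$. Specialising to $s=1$ yields $j_l(\mu)=t$ iff $\prod_{s=1}^t{\lambda_{i_s}+t-s\brack 1}_q\neq 0$, which is the second assertion.

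To prove the inequality, I would introduce
\[
M(\la):=\max\bigl\{t\geq 0 \,\big|\, \exists\,1\leq k_1<\cdots<k_t\leq\ell(\la) \text{ with } l\nmid \lambda_{k_r}+t-r\ \forall\,r\bigr\}
\]
and show $M(\la)=j_l(\la)$. The bound $M(\la)\geq j_l(\la)$ is immediate: the indices at which $x_i^\la=1$, read in order, form a good sequence of length $j_l(\la)$ by the very definition of the recursion. For the reverse bound, I would induct on $d=\ell(\la)$. Writing $\la'=(\lambda_2,\ldots,\lambda_d)$, the recursion gives $\sum_{i\geq 2}x_i^\la=j_l(\la')$ and hence $j_l(\la)=j_l(\la')+x_1^\la$, with $x_1^\la=1$ iff $l\nmid\lambda_1+j_l(\la')$. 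Splitting a good length-$t$ subsequence of $\la$ according to whether it uses position $1$ produces either a good length-$t$ subsequence of $\la'$ (so $t\leq M(\la')=j_l(\la')$ by induction) or a good length-$(t{-}1)$ subsequence of $\la'$ together with the extra condition $l\nmid\lambda_1+t-1$; in the borderline case $t=j_l(\la')+1$, this extra condition forces $x_1^\la=1$ and hence $t=j_l(\la)$.

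With $M(\la)=j_l(\la)$ in hand, the inequality $j_l(\mu)\leq j_l(\la)$ is transparent: any good subsequence of $\mu$ re-indexes via $k_r=i_{j_r}$ into a good subsequence of $\la$ of the same length, so $j_l(\mu)=M(\mu)\leq M(\la)=j_l(\la)$. I expect the main obstacle to be purely bookkeeping: tracking how the divisibility condition $l\nmid\lambda_{k_r}+t-r$ shifts when we delete position $1$ (moving the length parameter from $t$ to $t-1$) and handling the borderline case in the induction. Everything else is a routine unwinding of the recursive definition.
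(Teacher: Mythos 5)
Your proposal is correct, and it takes a genuinely different route from the paper. The paper's proof of the inequality is a direct, greedy construction: starting from the hypothesis $j_l(\mu)=t$, it extracts the congruences $l\nmid \lambda_{i_s}+(t-s)$ for $s=t,\ldots,1$ and then builds indices $i_t'\geq i_{t-1}'\geq\cdots\geq i_1'$ in $\lambda$ by choosing, at each step, the maximal admissible index in the appropriate interval so that the recursion~\eqref{x_i} forces $x_{i_s'}=1$; this directly exhibits $t$ ones among the $x_i^\lambda$. You instead isolate the quantity
\[
M(\la)=\max\bigl\{t\,\big|\,\exists\,1\leq k_1<\cdots<k_t\leq\ell(\la),\ l\nmid\lambda_{k_r}+t-r\ \forall r\bigr\},
\]
prove $M(\la)=j_l(\la)$ by peeling off the first coordinate and inducting on length (using the key identity $j_l(\la)=j_l(\la')+x_1^\la$), and then observe that both parts of the lemma are immediate consequences of this characterization: the iff because a length-$t$ tuple $\mu$ has $j_l(\mu)=t$ iff $\mu$ is a good sequence for itself, and the inequality because a good subsequence of $\mu$ re-indexes to a good subsequence of $\la$. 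The paper's construction is shorter once you trust the greedy step, but it leaves implicit the verification that no $x_i=1$ occurs strictly between consecutive chosen indices; your induction packages that bookkeeping into the single case split (does the good sequence use position $1$ or not) and handles the borderline case $t=j_l(\la')+1$ cleanly. Your version also has the pedagogical advantage of making explicit that $j_l(\la)$ is a maximum over subsequences, which is exactly the conceptual content the lemma is trying to capture.

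One small point worth noting explicitly in a write-up: to run the downward recursion on $\mu=(\lambda_{i_1},\ldots,\lambda_{i_t})$ you need $\ell(\mu)=t$, i.e., $\lambda_{i_t}>0$; this holds because the hypothesis $i_t\leq\ell(\la)$ forces all selected parts to be positive. This is implicit in both proofs but easy to overlook.
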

\begin{proof}Let $\mu=(\lambda_{i_1},\lambda_{i_2},\cdots, \lambda_{i_t})$.
By the observation above, we may  assume  that $j_l(\mu)=t$ and prove $t\leq j_l(\lambda)$.
Define $y_1, y_2,\ldots,y_t,\ldots \in \{0,1\}$ for $\mu$, similar to the $x_i$ for $\lambda$, as above.   Since $j_l(\mu)= t$,  we have $y_t=y_{t-1}=\cdots =y_1=1$ and $y_i=0$ for all $i>t$. Thus, by definition, we have, for $s=t,t-1,\ldots,1$,
$$\lambda_{i_s}+t-s=\mu_{s}+t-s \not\equiv 0\quad   (\!\!\!\!\!\mod l).$$
We claim that there exist $1\leq i_1'<\cdots<i_t'\leq d=\ell(\la)$ such that $x_{i_s'}=1$. Indeed,
let $i_t'\in[1,d]$  be the maximal index such that $\lambda_{i_t'}\not\equiv  0({\rm mod}\,l)$. Then $i_t'\in [i_t,d]$ and $ x_{i_t'}=1.$
For every $s=t-1, t-2,\ldots 1$, the above congruence relations guarantee that there exists $i_s'\in [i_{s}, i_{s+1}') $,  maximal  in the interval,
such that
$
\lambda_{i_{s}'}+t-s\not\equiv 0({\rm mod}\, l).
$
Thus, by the selection of $i_s'$, we have $x_{i_{s}'}=1$ for all $s\in[1,t]$, proving the claim and,
hence, the first assertion.

Since  ${\lambda_{i_{s}}+t-s \brack 1}_q=[\lambda_{i_{s}}+t-s]_q\neq 0
$  if and only if 
$\lambda_{i_{s}}+t-s \not\equiv 0({\rm mod}\, l)$ (noting $q^2$ is a primitive $l$th root of unity),
 the last assertion is clear.
\end{proof}

%
%
%


 For  $\lambda\in \Lambda^\pp(m|n,r)$, define the ``modulo $l$'' subset
\begin{equation}\label{++l}
  \Lambda^\pp_l(m|n,r)=
  \{\lambda\in \Lambda^\pp(m|n,r)
  \,
  |
  \,
  j_l(\la^{(1)})\leq \lambda_m
  \}.
\end{equation}
If $\l=\infty$, then $\lambda^{(0)}$ concatenating with the dual of $\lambda^{(1)}$ is a well defined partition of $r$ 
and the set $ \Lambda^\pp_\infty(m|n,r)$ is identified with
\begin{equation}\label{spartition}
\Lambda^+(r)_{m|n}:=\{\la\in\Lambda^+\mid \lambda_{m+1}\leq n,|\lambda|=r\}
\end{equation}
(see \cite[(4.0.2)]{dr}). This set is used in \cite{dr} to label irreducible $S_{\up,\mathbb Q(\up)}(m|n,r)$-modules.

\begin{remark}If $l=p$ is a prime,  $\Lambda^\pp_p(m|n,r)$ is used in \cite{bru}\footnote{This set is denoted by $ \Lambda^\pp(m|n,r)$ in \cite[Thm 6.5]{bru}, where $\la^{(1)}$ is called the tail $t(\la)$ of $\la$.} to parametrize the irreducible supermodules of the
Schur superalgebra $S(m|n,r)$ in positive characteristic $p$. In the theorem below, we will generalise this result to  the quantum Schur superalgebras at every primitive $l'$-th root of unity $q$.
\end{remark}

%
%
%
\begin{lemma}\label{comp}
For $\lambda\in \Lambda^\pp(m|n,r),$ let $\fkm_\lambda$ be a maximal vector of weight $\lambda$ for a weight $U_{q,F}$-supermodule. 
Consider the sequences $1\leq h< i_1<i_2< \cdots <  i_{{s}}\leq m+n$ and
$(a_1,\cdots, a_s)\in \mathbb Z_{>0}^s$, where $a_t=1$ whenever $\alpha_{h,i_t}$ is an odd root. 
\begin{itemize}
\item[(1)] If $1\leq t\leq s$ and $i>i_t$, then
$ E_{h,i}^{(b)}  .(E_{{i_{{t}}},h}^{(a_t)} \cdots E_{i_1, h}^{(a_1)}.\fkm_\lambda)
=0, \forall    b>0.$
\item[(2)]  $(E_{h,i_1}^{(a_1)}\cdots  E_{h,{i_{{s}}}}^{(a_s)}  )
.(E_{{i_{{s}}},h}^{(a_s)} \cdots E_{i_1, h}^{(a_1)}.\fkm_\lambda)
=\displaystyle{\prod_{t=1}^{{s}}
 {\lambda_{h}-(-1)^{{\bar h}+{\bar i_t}}\lambda_{i_t}-a_{t-1}\cdots-a_1 \brack a_t}_q \fkm_\lambda.}$
\end{itemize}
\end{lemma}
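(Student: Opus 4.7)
I will prove the two parts together by induction on $s$, with Part~(1) serving as the technical ingredient that makes Part~(2) collapse to a product of Gaussian binomials. The base case $s=0$ of Part~(1) is the statement $E_{h,i}^{(b)}\mathfrak{m}_\lambda = 0$ for $h<i$ and $b>0$, which follows from the maximality of $\mathfrak{m}_\lambda$: this is immediate for simple raising generators, and extends to all positive root vectors $E_{h,i}^{(b)}$ by iterating the recursive definition of $E_{h,i}$ together with the commutation formulas of Proposition~\ref{ppco}.

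For the inductive step of Part~(1), I will invoke Proposition~\ref{ppco}(3) with $a = h$, $b = c = i_t$, $d = i$ to expand
\[
E_{h,i_t}^{(b)} E_{i_t,i}^{(b)} \;=\; \sum_{k=0}^{b} q_{i_t}^{-(b-k)^{2}}\, E_{i_t,i}^{(b-k)}\, E_{h,i}^{(k)}\, E_{h,i_t}^{(b-k)},
\]
and solve for $E_{h,i}^{(b)}$ (the $k=b$ term); every other summand then carries an extra factor $E_{i_t,i}^{(b-k)}$ with $b-k>0$. Since $h < i_s \le i_t < i$ for every $s\le t$, the interval $[i_t,i]$ meets each interval $[h,i_s]$ only trivially or at the shared endpoint $i_t$, so by the relevant cases of Proposition~\ref{cqrv} (supplemented by the shared-endpoint case $a = c < d < b$, which is obtained from Proposition~\ref{cqrv}(3) via the anti-automorphism $\Upsilon$ of \eqref{Up}) the factor $E_{i_t,i}^{(b-k)}$ super-commutes past every $E_{i_s,h}^{(a_s)}$ in the chain and is carried onto $\mathfrak{m}_\lambda$, where it dies. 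The leftover $E_{h,i}^{(k)}$-factors with $k<b$ are handled by a secondary induction on $k$.

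For Part~(2) I exploit the palindromic structure: the innermost pair $E_{h,i_s}^{(a_s)} E_{i_s,h}^{(a_s)}$ is reduced by Proposition~\ref{cqrv}(4) with $M=N=a_s$:
\[
E_{h,i_s}^{(a_s)} E_{i_s,h}^{(a_s)} \;=\; \sum_{k=0}^{a_s} E_{i_s,h}^{(a_s-k)}\, \begin{bmatrix} K_{h,i_s};\, 2k-2a_s \\ k \end{bmatrix}\, E_{h,i_s}^{(a_s-k)}.
\]
When this operator acts on $v_{s-1} := E_{i_{s-1},h}^{(a_{s-1})}\cdots E_{i_1,h}^{(a_1)}\mathfrak{m}_\lambda$, Part~(1) (applied with $i = i_s > i_{s-1}$) kills every summand with $a_s-k>0$, and only the $k=a_s$ term survives, contributing $\binom{K_{h,i_s};\,0}{a_s} v_{s-1}$. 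The weight $\mu$ of $v_{s-1}$ satisfies $\mu_h = \lambda_h - (a_1+\cdots+a_{s-1})$ and $\mu_{i_s} = \lambda_{i_s}$, so by \eqref{wt space1} this operator acts as the scalar
\[
{\lambda_h - (-1)^{\bar h + \bar i_s}\lambda_{i_s} - a_{s-1} - \cdots - a_1 \brack a_s}_{q}.
\]
Pulling this scalar out and applying the inductive hypothesis to $(E_{h,i_1}^{(a_1)}\cdots E_{h,i_{s-1}}^{(a_{s-1})})\,v_{s-1}$ assembles the full product and closes the induction.

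The main obstacle is Part~(1). Unlike the clean pair-by-pair cancellation in Part~(2), here one must dispose of all cross-terms produced when $E_{h,i}^{(b)}$ is commuted past a whole chain of lowerings: these cross-terms involve auxiliary root vectors $E_{i_t,i}$ absent from the original expression, and tracking $q$-powers and super-signs through them — while also relying on the shared-endpoint commutation formula not explicitly listed in Proposition~\ref{cqrv} — is where the technical weight of the lemma resides. The condition $a_t=1$ whenever $\alpha_{h,i_t}$ is odd is used implicitly to ensure all divided powers appearing in the manipulations are well-defined in view of Proposition~\ref{ppco}(0).
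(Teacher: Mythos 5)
Your Part~(2) is correct and follows the paper's route exactly: reduce the innermost pair $E_{h,i_s}^{(a_s)}E_{i_s,h}^{(a_s)}$ via Proposition~\ref{cqrv}(4), invoke Part~(1) to kill all but the $k=a_s$ summand, read off the Gaussian binomial from the weight of $v_{s-1}$ via \eqref{wt space1}, and induct.

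Part~(1), however, takes a genuinely different route from the paper, and the route has a gap. The paper applies the anti-automorphism $\Upsilon$ to Proposition~\ref{cqrv}(3) to commute $E_{h,i}^{(b)}$ directly past $E_{i_t,h}^{(a_t)}$, obtaining
\begin{equation*}
E_{h,i}^{(b)}\,v_t \;=\; \sum_{k}(\text{const})\,E_{i_t,h}^{(a_t-k)}\,K_{h,i_t}^{-k}\,E_{h,i}^{(b-k)}\,E_{i_t,i}^{(k)}\,v_{t-1},
\end{equation*}
in which \emph{each} term has its surviving positive root vector, $E_{h,i}^{(b-k)}$ or $E_{i_t,i}^{(k)}$, acting directly on $v_{t-1}$: if $k>0$, the factor $E_{i_t,i}^{(k)}$ super-commutes past $v_{t-1}$ (strictly disjoint intervals, Proposition~\ref{cqrv}(1) via $\Upsilon$) and dies on $\mathfrak m_\lambda$; if $k=0$, then $b-k>0$ and $E_{h,i}^{(b)}v_{t-1}=0$ by the inductive hypothesis on $t$. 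That is the whole argument. Instead you solve for $E_{h,i}^{(b)}$ from Proposition~\ref{ppco}(3); the cross-terms are then
\begin{equation*}
E_{i_t,i}^{(b-k)}\,E_{h,i}^{(k)}\,E_{h,i_t}^{(b-k)}\,v_t \qquad (0\le k<b),
\end{equation*}
and your disposal of them is not correct as stated. You claim that ``$E_{i_t,i}^{(b-k)}$ super-commutes past every $E_{i_s,h}^{(a_s)}$ in the chain and is carried onto $\mathfrak m_\lambda$, where it dies,'' but $E_{i_t,i}^{(b-k)}$ sits at the extreme \emph{left} of the whole expression: between it and the chain $v_t$ stand the raising factors $E_{h,i}^{(k)}$ and $E_{h,i_t}^{(b-k)}$. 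The factor $E_{i_t,i}^{(b-k)}$ does \emph{not} super-commute with $E_{h,i_t}^{(b-k)}$ --- Proposition~\ref{ppco}(3) applied to that shared-endpoint pair produces fresh $E_{h,i}$-terms --- so you cannot ``carry it onto $\mathfrak m_\lambda$.'' Dispensing with these summands actually requires first reducing $E_{h,i_t}^{(b-k)}v_t$ via Proposition~\ref{cqrv}(4), then running a genuine double induction (on $t$, and within each $t$ on the power of $E_{h,i}$) to dispose of the leftover $E_{h,i}^{(k)}E_{i_t,h}^{(c)}v_{t-1}$, and only then killing the outer $E_{i_t,i}^{(b-k)}$ by the disjoint-interval super-commute. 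This is far more than the parenthetical ``secondary induction on $k$'' that you offer. Separately, your citation of the shared-endpoint formula is off: for $E_{i_t,i}$ past $E_{i_t,h}$ the configuration is $c<d=a<b$, which falls under $\Upsilon$ applied to Proposition~\ref{cqrv}(1) (the branch $b\le c$), not $\Upsilon$ of Proposition~\ref{cqrv}(3) (which is $a=c<d<b$ and is what the \emph{paper} uses, but for a different pair of root vectors). The two cases both happen to be harmless, but they are different formulas. In short: the paper's one-step commutation keeps every surviving raising factor adjacent to the chain and makes the induction trivial; your ``solve for'' approach pushes those factors to the wrong side, and the argument as written does not close.
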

\begin{proof}
Applying the anti-automorphism $\Upsilon$ defined in \eqref{Up} to the formulas
in Proposition \ref{cqrv}(3) yields the following commutation formulas in $U_{q,F}$:
$$E_{h,i}^{(b)}  E_{{i_{t},h}}^{(a_t)} 
=\begin{cases}
(-1)^{\bar E_{h,i}\bar E_{i_t,h}}E_{i_t,h}E_{h,i}-(-1)^{\bar E_{h,i}\bar E_{i_t,h}}E_{i_t,i}K_{i_t,h},&\text{if } b=a_t=1;\\
\displaystyle\sum_{k=0}^{\min({b,a_t})} (-1)^kq_{i_t}^{k(a_{t}-1-k)}
E_{{i_{t}},h}^{(a_t-k)} K_{h,i_t}^{-k} E_{h,i}^{(b-k)} E_{i_t,i}^{(k)},&\text{otherwise.}
\end{cases}$$
Since either $b-k>0$ or $k>0$ and $\fkm_\la$ is a maximal vector, assertion (1) is clear if $t=1$. The general case follows from induction.


We now prove (2). By 
Proposition \ref{cqrv}(4) and assertion (1), if $a_s>1$,
\begin{equation}\notag
\begin{aligned}
 & E_{h,{i_{{s}}}}^{(a_s)}  
.(E_{{i_{{s}}},h}^{(a_s)} \cdots E_{i_1, h}^{(a_1)}.\fkm_\lambda)\\
=&\left(\sum_{t=0}^{a_s} E_{{i_{{s}}},h}^{(a_s-t)}
\begin{bmatrix} K_{h,i_s}; 2t-a_s-a_s \\ t \end{bmatrix} E_{h,{i_{{s}}}}^{(a_s-t)}\right)
(E_{{i_{{s-1}}},h}^{(a_{s-1})} \cdots E_{i_1, h}^{(a_1)}.\fkm_\lambda)\\
=&\begin{bmatrix} K_{h,i_s}; 0 \\   a_s \end{bmatrix} 
(E_{{i_{{s-1}}},h}^{(a_{s-1})} \cdots E_{i_1, h}^{(a_1)}.\fkm_\lambda) \quad(\text{by } (1))\\
=& {\lambda_{h}-a_{s-1}\cdots-a_1-(-1)^{{\bar h}+{\bar i_s}}\lambda_{i_s} \brack a_s}_q
(E_{{i_{{s-1}}},h}^{(a_{s-1})} \cdots E_{i_1, h}^{(a_1)}.\fkm_\lambda),\notag
\end{aligned}
\end{equation}
by \eqref{wei} and \eqref{wt space1}. The $a_s=1$ case is similar. Induction on $s$ proves (2).
\end{proof}


%
%
%
\begin{theorem}\label{cirr} 
For $r>0$ and $\lambda\in\Lambda^\pp(m|n,r)$, the irreducible $U_{q,F}$-supermodule $L(\lambda)$ is polynomial if and only if
$\lambda\in\Lambda^\pp_l(m|n,r)$. In particular, 
the set
$\{L(\lambda)
\,
|
\,
\lambda\in \Lambda^\pp_l(m|n,r)\}
$
forms a compete set of all pairwise non-isomorphic polynomial
irreducible ${ U}_{q,F}$-supermodules of degree $r$ in $U_{q,F}$-{\bf mod}.
\end{theorem}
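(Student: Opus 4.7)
The plan is to verify the biconditional: for $\lambda\in\Lambda^\pp(m|n,r)$, $L(\lambda)$ is polynomial iff $\lambda_m\geq j_l(\lambda^{(1)})$. By Proposition \ref{ciir} every irreducible weight supermodule is some $L(\lambda)$ with $\lambda\in\mathbb Z^{m|n}_\pp$, and by Proposition \ref{cpol} a polynomial supermodule of degree $r$ has all weights in $\Lambda(m|n,r)$. Hence only $\lambda\in \Lambda^\pp(m|n,r)$ can label polynomial irreducibles, and distinct $\lambda$'s give non-isomorphic $L(\lambda)$'s, so the ``complete set'' assertion follows once the biconditional is proved.

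For the necessary direction, assume $L(\lambda)$ is polynomial and suppose, for contradiction, $\lambda_m<j_l(\lambda^{(1)})$. Set $t=\lambda_m+1\leq j_l(\lambda^{(1)})$. The $x$-recursion \eqref{x_i} applied to $\lambda^{(1)}$ produces $I=\{i:x_i=1\}$ of size $j_l(\lambda^{(1)})\geq t$; let $k_1<\cdots<k_t$ be its last $t$ elements. A direct check of the recursion on the subsequence $(\lambda^{(1)}_{k_1},\ldots,\lambda^{(1)}_{k_t})$ shows $j_l$ of this length-$t$ subsequence equals $t$, so Lemma \ref{mull} gives $\prod_{s=1}^t[\lambda^{(1)}_{k_s}+t-s]_q\neq 0$. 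Put $h_s=m+k_s\in(m,m+n]$ and apply Lemma \ref{comp}(2) with $h=m$, $i_s=h_s$, $a_s=1$ (legitimate since each $\alpha_{m,h_s}$ is odd):
\[
(E_{m,h_1}\cdots E_{m,h_t})\cdot(E_{h_t,m}\cdots E_{h_1,m}\cdot\fkm_\lambda)=\prod_{s=1}^t[\lambda_m+\lambda_{h_s}-(s-1)]_q\,\fkm_\lambda.
\]
The crucial identity $\lambda_m=t-1$ converts the scalar to $\prod_{s=1}^t[\lambda^{(1)}_{k_s}+t-s]_q\,\fkm_\lambda\neq 0$, so $v:=E_{h_t,m}\cdots E_{h_1,m}\cdot\fkm_\lambda$ is nonzero in $L(\lambda)$; its weight $\lambda-t\epsilon_m+\sum_s\epsilon_{h_s}$ has $m$-th entry $-1$, contradicting polynomiality.

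For the sufficient direction I combine the above with a counting argument. By Proposition \ref{cpol}, polynomial irreducibles of degree $r$ are in bijection with pairwise non-isomorphic irreducible $S_{q,F}(m|n,r)$-supermodules; the latter set, classified in \cite{dgw1,dgw2} independently of quantum supergroups, has cardinality $|\Lambda^\pp_l(m|n,r)|$. The necessity direction gives an injection from polynomial irreducibles into $\Lambda^\pp_l(m|n,r)$ via $L(\lambda)\mapsto\lambda$, so equality of cardinalities forces a bijection, completing the proof.

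The main obstacle is the index-arithmetic in the necessary direction. The clever choice $t=\lambda_m+1$ forces $\lambda_m=t-1$, which is precisely what makes the shift $-(s-1)$ in the Lemma \ref{comp}(2) product coincide with the shift $+(t-s)$ in the Lemma \ref{mull} product; any other $t$ in the allowed range $(\lambda_m,j_l(\lambda^{(1)})]$ would leave an unwanted mod-$l$ shift between the two products, and Lemma \ref{mull}'s non-vanishing criterion could not be invoked directly.
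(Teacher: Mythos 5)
Your necessity direction is correct and is essentially the paper's own argument: set $t=\lambda_m+1$, take the last $t$ indices in the $x$-recursion for $\lambda^{(1)}$, verify that the length-$t$ subsequence has $j_l$ equal to $t$, and then chain Lemma \ref{mull} with Lemma \ref{comp}(2) (with $h=m$ and all $a_s=1$) to produce a nonzero vector of weight $\lambda - t\epsilon_m + \sum_s\epsilon_{h_s}$, whose $m$-th entry is $-1$. The index arithmetic ($\lambda_m=t-1$ aligning the two product shifts) is exactly what the paper uses in \eqref{non}.

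The sufficiency direction, however, is where the theorem's real content sits, and your counting argument has a genuine gap. You outsource the count of irreducible $S_{q,F}(m|n,r)$-supermodules to \cite{dgw1,dgw2}, but the paper's own remark immediately after Theorem \ref{classification} states that the \cite{dgw1,dgw2} classification is only established for $m+n\geq r$, and that Theorem \ref{cirr} is precisely what extends it to $m+n<r$. So for $m+n<r$ your argument has no external classification to compare cardinalities against, and the proof does not close. Even in the $m+n\geq r$ case, the assertion that the \cite{dgw1,dgw2} index set has cardinality $|\Lambda^\pp_l(m|n,r)|$ is itself a non-trivial combinatorial identification (deferred in the paper to \cite[Thm B.3]{dgw1}), which you state without justification. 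The paper avoids all of this by proving sufficiency directly: assume $L(\lambda)$ is not polynomial, pick a weight $\mu$ maximal among those with some negative entry $\mu_h<0$ (with $h<m+n$), establish via Claims 1 and 2 that $L(\lambda)_\mu$ is spanned by a single monomial $E_{i_s,h}^{(a_s)}\cdots E_{i_1,h}^{(a_1)}\fkm_\lambda$, extract the non-vanishing condition from Lemma \ref{comp}(2), and run the mod-$l$ arithmetic through \eqref{(1)}--\eqref{texp} to force $j_l(\lambda^{(1)})\geq\lambda_m+1$. That self-contained argument is what makes the theorem hold for all $m,n,r$ and is the part you would need to reconstruct.
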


\begin{proof}
Choose a maximal vector  $0\neq \fkm_\lambda\in L(\lambda)_\lambda$  then
 \begin{equation}\notag
L(\lambda)= { U}_{F}. \fkm_\lambda=  { U}_{F}^-. \fkm_\lambda .
\end{equation}

First, we prove that, for $\lambda\in \Lambda^\pp(m|n,r)$, if  $j_l(\lambda^{(1)})> \lambda_m,$ then
$L(\lambda)$ is not a polynomial supermodule. Suppose that  $\lambda\notin \Lambda^\pp_l(m|n,r)$ and so
$s:= j(\lambda^{(1)})\geq \lambda_m+1\geq 1.$
For partition $\lambda^{(1)}=(\lambda_{m+1},\cdots,\lambda_{m+n}),$ the sequence $x_1,x_2,\ldots,x_d$ defined in \eqref{x_i} satisfies $\sum_{i\geq 1} x_i=s.$  Let $i_1<i_2< \cdots <  i_{\lambda_m+1}$ be the indices of the last 
 $\lambda_m+1$ nonzero terms in the sequence.
 Then $x_{i_t}=1$ for all $t\in [1,\lambda_m+1]$ and
$$
j_l( \lambda_{{m+i_{1}}},  \cdots, \lambda_{m+i_{\lambda_m+1}})={\lambda_m+1}.
$$
By Lemma \ref{mull}
\begin{equation}\notag
\prod_{t=1}^{\lambda_m+1}{\lambda_{m+i_{t}}+\lambda_m+1-t \brack 1}_q\neq 0.
\end{equation}
Thus,  Lemma \ref{comp} implies
\begin{eqnarray}\label{non}
&&(E_{m,m+i_1}\cdots E_{m,{m+i_{{\lambda_m+1}}}})
               (E_{m+{i_{{\lambda_m+1}}},m} \cdots E_{m+i_1,m} ).\fkm_\lambda \notag \\
&&=\prod_{t=1}^{{\lambda_m+1}}
                          {\lambda_{m}+\lambda_{m+i_t}-t+1 \brack 1}_q\fkm_\lambda
\neq 0.\qquad
\end{eqnarray}
Hence, $E_{m+{i_{{\lambda_m+1}}},m} \cdots E_{m+i_1,m}.\fkm_\lambda\neq0$, forcing
$L(\lambda)_{\lambda-\alpha_{m,m+i_1}-\cdots-\alpha_{m,m+{i_{{\lambda_m+1}}}}}\neq0. $
Since
$$\aligned
{\lambda-\alpha_{m,m+i_1}-\cdots-\alpha_{m,m+{i_{{\lambda_m+1}}}}}
&=\lambda-(\lambda_m+1)\epsilon_m+\epsilon_{m+i_1}+\cdots+\epsilon_{m+i_{\lambda_m+1}},
\endaligned$$
whose $m$th component is $-1$,
it follows that $\pi(L(\la))\not\subseteq \Lambda(m|n,r)$. Hence,
$L(\lambda)$ is not a polynomial supermodule.

%
%

We now prove the converse.
Suppose $L(\lambda)$  is not a polynomial supermodule of degree $r=|\lambda|$. Then
there exists  $ \nu\in\mathbb Z^{m+n}$ such that $|\nu|=r,$  $ L(\lambda)_\nu\neq 0,$ and  $ \nu_h<0$ for some
$h\in [1,m+n].$ In fact, we may assume that $h<m+n$. This can be seen from the fact that $L(\lambda)= { U}_{F}^-. \fkm_\lambda$
 is spanned by vectors $\prod_{a<b}E_{b,a}^{(A_{b,a})}.\fkm_\lambda$ whose weights are of the form $\lambda-\sum_{a<b} A_{b,a}(\epsilon_a-\epsilon_b)$. 
 We need to prove that $\lambda\not\in\Lambda^\pp_l(m|n,r)$.

Let
$$\mu=\max\{\nu\in \pi(L(\lambda))\, |\, \nu_h<0\}.$$

\noindent
{\bf Claim 1.} The weight space $L(\lambda)_\mu$ is spanned by the  vectors 
$$
\{ 
\prod_{h+1\leq b\leq m+n}
     E_{b,h}^{(A_{b,h})}\, .  \mathfrak{m}_\lambda 
\, |\,
  A_{b,h}\in\mathbb N  \mbox{ and  }\mu=\lambda-A_{h+1,h}\alpha_{h,h+1}-\cdots-A_{m+n,h}\alpha_{h,m+n}
\}.
$$
{\bf Proof of Claim 1.} Fix  an ordering on $\Phi^+$ such that the sequence ends with the $m+n-h$ positive roots:
$\alpha_{h,m+n},\alpha_{h,m+n-1},\ldots,\alpha_{h,h+1}$. Then
\begin{equation}\notag
L(\lambda)= { U}_{F}^-. \fkm_\lambda
=\span\bigg\{
\prod_{{\epsilon_a-\epsilon_b\in \Phi^+,a\neq h}}
E_{b,a}^{(A_{b,a})}\,
\prod_{h+1\leq b\leq m+n}E_{b,h}^{(A_{b,h})} .  \fkm_\lambda\bigg|A\in M(m|n)^-
 \bigg\}.
\end{equation}

For every nonzero spanning vector of the form,
\begin{equation}\notag
w=\prod_{{\epsilon_a-\epsilon_b\in\Phi^+}\atop a\neq h}
E_{b,a}^{(A_{b,a})}\bigg(
\prod_{h+1\leq b\leq m+n }
 E_{b,h}^{(A_{b,h})}\, . \mathfrak{m}_\lambda\bigg)
\end{equation}
satisfying $ \wt(w)_h<0$, if $ \prod_{{\epsilon_a-\epsilon_b\in\Phi^+}\atop a\neq h}
E_{b,a}^{(A_{b,a})}\neq 1,$
repeatedly applying  \eqref{cmax} yields
\begin{equation}\notag
\wt(
\prod_{h+1\leq b\leq m+n }
 E_{b,h}^{(A_{b,h})}\, . \mathfrak{m}_\lambda)
>
\wt(w).
\end{equation}
and
\begin{equation}\notag
\left(\wt(
\prod_{h+1\leq b\leq m+n }
 E_{b,h}^{(A_{b,h})}\, . \mathfrak{m}_\lambda)\right)_h
 \leq \wt(w)
_h<0
\end{equation}
Thus, if $w\in L(\lambda)_\mu$, then the maximality of $\mu$ forces $ \prod_{{\epsilon_a-\epsilon_b\in\Phi^+}\atop a\neq h}
E_{b,a}^{(A_{b,a})}=1$, proving Claim 1.

By Claim 1, we  choose a nonzero vector $v\in L(\lambda)_\mu$ of the form:
\begin{equation}\label{maxmu}
v= E_{{i_{{s}}},h}^{(a_s)}  E_{{i_{{s-1}}},h}^{(a_{s-1})} \cdots E_{i_1, h}^{(a_1)}.\mathfrak{m}_\lambda\neq0,
\end{equation}
for some sequences $h< i_1<i_2< \cdots <  i_{{s}}\leq m+n$ and
$(a_1,\cdots, a_s)\in (\mathbb Z_{>0})^s$ where $a_t=1$ whenever
 ${\alpha}_{h,i_t}$ is an odd root. Then $\mu=\wt(v)=\la+\sum_{t=1}^sa_t(\epsilon_{i_t}-\epsilon_h)$.

Since $\wt(E_{{i_{{s-1}}},h}^{(a_{s-1})} \cdots E_{i_1, h}^{(a_1)}.\mathfrak{m}_\lambda)>\wt(v)=\mu$, by the selection of $\mu$,
 we must have $\wt(E_{{i_{{s-1}}},h}^{(a_{s-1})} \cdots E_{i_1, h}^{(a_1)}.\mathfrak{m}_\lambda)_h\geq0$. In other words, we have
\begin{equation}\label{(1)}
a_1+\cdots a_{s-1}\leq \lambda_h<a_1+\cdots a_s.
\end{equation}

If, for some $a<b$, $a\neq h$, and $M\in\mathbb Z_{>0}$, $u=E_{a,b}^{(M)}.v\neq 0$, then by \eqref{wei},
$\wt(u)=\mu+M(\epsilon_a-\epsilon_b)>\wt(v)=\mu$ and $\wt(u)_h\leq\wt(v)_h<0$,  contrary to the selection of $\mu$. Thus, we have
\begin{equation}\label{mvan}
E_{a,b}^{(M)}.v= 0 \text{ for all  } 1\leq a< b\leq m+n, a\neq h, M\in\mathbb Z_{>0}.
\end{equation}

\noindent
{\bf Claim 2.} For the selected $v$ as in \eqref{maxmu}, we have
\begin{equation}\label{(2)}
 \prod_{t=1}^{{s}}
                          {\lambda_{h}-(-1)^{{\bar h}+{\bar i_t}}\lambda_{i_t}-a_{t-1}\cdots-a_1 \brack a_t}_q\neq 0.
\end{equation}
{\bf Proof of Claim 2:}
Since $\lambda$ is the highest weight of $L(\lambda)=U_{q,F}.v$ and $\fkm_\lambda\in { U}_{q,F}. v
={ U}_{q,F}^-{ U}_{q,F}^0{ U}_{q,F}^+. v$, no vectors with weight $\lambda$ can occur in the set
$I^-_F{ U}_{F}^0{ U}_{F}^+. v$, where $I^-_F$ is the ideal spanned by all monomials of positive degree.
Hence, we must have
\begin{equation}\notag
\fkm_\lambda\in U_{q,F}.v=
{ U}_{q,F}^+.E_{{i_{{s}}},h}^{(a_s)} \cdots E_{i_1, h}^{(a_1)}\fkm_\lambda.
\end{equation}
By using a PBW type basis for $U_{q,F}^+$ over an ordering on positive roots, 
beginning with  $\alpha_{h,m+n},\alpha_{h,m+n-1},\ldots,\alpha_{h,h+1}$, \eqref{mvan} implies
\begin{equation}\notag
\begin{aligned}
  { U}_{F}^+. v
 &=\span\bigg\{
 \prod_{h+1\leq b\leq m+n}
 E_{h,b}^{(A_{h,b})}\,
\prod_{{\epsilon_a-\epsilon_b\in\Phi^+}\atop a\neq h}
E_{a,b}^{(A_{a,b})}\, .v\;\bigg| \;A\in P(m|n)\bigg\} \\
& =\span\bigg\{
\bigg(\prod_{h+1\leq b\leq m+n}
 E_{h,b}^{(A_{h,b})}\bigg) \;v\bigg|\; A_{h,b}\in\mathbb N\bigg\}.
\end{aligned}
\end{equation}
Thus, 
\begin{equation}\notag
\aligned
({ U}_{F}^+. v)_\lambda&=\span\left\{ 
\prod_{h+1\leq b\leq m+n}
 E_{h,b}^{(A_{h,b})}\,
.v\;\bigg|\;\sum_{b=h+1}^{m+n}A_{h,b}(\epsilon_h-\epsilon_b)=\sum_{t=1}^sa_t(\epsilon_h-\epsilon_{i_t})\right\}\\
&=
\span\left\{ (E_{h,i_1}^{(a_1)}\cdots  E_{h,{i_{{s}}}}^{(a_s)}  )
.(E_{{i_{{s}}},h}^{(a_s)} \cdots E_{i_1, h}^{(a_1)}.\fkm_\lambda)\right\}.
\endaligned
\end{equation}
However, by Lemma \ref{comp}(2),
\begin{equation}\notag
\begin{aligned}
 (E_{h,i_1}^{(a_1)}\cdots  E_{h,{i_{{s}}}}^{(a_s)}  )
.(E_{{i_{{s}}},h}^{(a_s)} \cdots E_{i_1, h}^{(a_1)}.\fkm_\lambda)
=\prod_{t=1}^{{s}}
 {\lambda_{h}-(-1)^{{\bar h}+{\bar i_t}}\lambda_{i_t}-a_{t-1}\cdots-a_1 \brack a_t}_q
                          \fkm_\lambda.\notag
\end{aligned}
\end{equation}
We must have
$ \prod_{t=1}^{{s}}
 {\lambda_{h}-(-1)^{{\bar h}+{\bar i_t}}\lambda_{i_t}-a_{t-1}\cdots-a_1 \brack a_t}_q\neq 0,$
 proving Claim 2.


Now, by the claim \eqref{(2)}, we see
$ {\lambda_{h}-(-1)^{{\bar h}+{\bar i_s}}\lambda_{i_s}-a_{s-1}\cdots-a_1 \brack a_s}_q\neq 0.$
This implies  
$$ {\lambda_{h}-(-1)^{{\bar h}+{\bar i_s}}\lambda_{i_s}-a_{s-1}\cdots-a_1 \geq a_s},$$
or
 $ {\lambda_{h}-(-1)^{{\bar h}+{\bar i_s}}\lambda_{i_s}\geq a_s+a_{s-1}\cdots+a_1  }.$
Thus, the second inequality in \eqref{(1)} forces ${{\bar h}+{\bar i_s}}=1.$ Since $h<i_s$, we must have
 $h\leq m< i_s.$ Hence, $\alpha_{h,i_s}$ is an odd root and so $a_s=1$.
  By \eqref{(1)}, $\la_h=a_1+\cdots+a_{s-1}$ and, consequently,
 $\mu_h=\wt(v)_h=-1$.

Finally, we are ready to prove $j_l(\lambda^{(1)})\geq \lambda_{m}+1.$ Let $s'$ be the minimal index such that
$m<i_{s'}.$ Then $1\leq h<i_1<\cdots<i_{s'-1}\leq m<i_{s'}<i_{s'+1}<\cdots<i_s$. 
This implies $a_{i_t}=1$ for all $s'\leq t\leq s$ and so \eqref{maxmu} becomes
\begin{equation}\notag
\begin{aligned}
v=E_{{i_{{s}}},h} \cdots E_{i_{s'}, h}\,
E_{{i_{{s'-1}}},h}^{(a_{s'-1})} \cdots E_{i_1, h}^{(a_1)}.\fkm_\lambda\;\, \text{ and }\,\;{{\bar h}+{\bar i_t}}=\begin{cases}
1,&s'\leq t\leq s;\\0,&1\leq t< s'.\end{cases}
\end{aligned}
\end{equation}
Since $\wt(v)_h=-1$, it follows that
$s-s'=\lambda_{h}-a_{  i_{s'-1} } -\cdots-a_1 .$
In this case the expression (\ref{(2)}) has the form
\begin{equation}\label{texp}
\begin{aligned}
\prod_{t=s'}^{{s}}
 {(s-s')+\lambda_{i_t}-(t-s') \brack 1}_q
 \prod_{t=1}^{{s'-1}}
 {\lambda_{h}-\lambda_{i_t}-a_{t-1}\cdots -a_1 \brack a_t}_q\neq 0.
\end{aligned}
\end{equation}
The factor for $t=s'-1$ in the second product of (\ref{texp})  being nonzero implies
 $$ {\lambda_{h}-\lambda_{i_{s'-1}}- a_{s'-2}-\cdots -a_1\geq  a_{s'-1}}$$
or equivalently, $s-s'\geq \lambda_{i_{s'-1}} .$

On the other hand, the first product  in (\ref{texp}) can be rewritten as
\begin{equation}\notag
\begin{aligned}
\prod_{t=s'}^{{s}}
 {(s-s')+\lambda_{i_{t}}-(t-s') \brack 1}_q
=\prod_{t=1}^{{s-s'+1}}
 {\lambda_{i_{s'+t-1}}+(s-s'+1)-t \brack 1}_q
\neq 0,
\end{aligned}
\end{equation}
which implies $ j_l(\lambda_{i_{s'}} ,\cdots,\lambda_{i_{s}} )=s-s'+1 $
by Lemma \ref{mull}. Hence, by Lemma \ref{mull} again,
 $$j_l(\lambda^{(1)})\geq j_l(\lambda_{i_{s'}} ,\cdots,\lambda_{i_{s}})
 = s-s'+1\geq \lambda_{i_{s'-1}}+1
 \geq \lambda_{m}+1,$$
noting $i_{s'-1}\leq m$.
Hence, $\lambda\not\in \Lambda^\pp_l(m|n,r)$, as required.
\end{proof}

\section{Classification of irreducible supermodules of $ { S}_{q,F}(m|n,r)$}
We keep the assumption on $F$ and $q$ and assume $l$ is the order of $q^2$ as in \S6. 

\begin{theorem}\label{classification} The set
$\{L(\lambda)
\,
|
\,
\lambda\in \Lambda^\pp_l(m|n,r)\}
$
forms a compete set of all non-isomorphic
irreducible  $S_{q,F}(m|n,r)$-supermodules.
\end{theorem}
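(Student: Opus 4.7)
The plan is to reduce the statement to Theorem \ref{cirr} by passing through the category equivalence implicitly set up by Proposition \ref{cpol} and the surjection $\eta_{r,F}\colon U_{q,F}(m|n)\twoheadrightarrow S_{q,F}(m|n,r)$ of \eqref{etaR}. First, given any (finite dimensional) irreducible $S_{q,F}(m|n,r)$-supermodule $M$, I inflate it along $\eta_{r,F}$ to an irreducible $U_{q,F}$-supermodule $\widetilde M$. The relations in Lemma \ref{reab}(1)--(2) (which hold in $S_{q,F}$ by Theorem \ref{kfie}) show that the idempotents $1_\lambda$ ($\lambda\in\Lambda(m|n,r)$) sum to $1$ and act on $\widetilde M$ as projections onto the weight spaces $\widetilde M_\lambda$ in the sense of \eqref{wt space}. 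Hence $\widetilde M=\bigoplus_{\lambda\in\Lambda(m|n,r)}\widetilde M_\lambda$ is a polynomial weight supermodule of degree $r$ in the sense of \S5, and irreducibility is preserved because $\eta_{r,F}$ is surjective.

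Conversely, Proposition \ref{cpol} tells us that every polynomial $U_{q,F}$-supermodule of degree $r$ factors through $\eta_{r,F}$ and is therefore the inflation of an $S_{q,F}(m|n,r)$-supermodule; irreducibility is again preserved. Thus inflation along $\eta_{r,F}$ provides a bijection between isomorphism classes of irreducible $S_{q,F}(m|n,r)$-supermodules and isomorphism classes of polynomial irreducible $U_{q,F}(m|n)$-supermodules of degree $r$ inside $U_{q,F}$-$\mathbf{mod}$.

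Applying Theorem \ref{cirr} to the right-hand side of this bijection gives exactly the indexing set $\Lambda^\pp_l(m|n,r)$, with the class of $\widetilde L(\lambda)$ corresponding to $\lambda\in\Lambda^\pp_l(m|n,r)$. Transporting $L(\lambda)$ back as an $S_{q,F}(m|n,r)$-supermodule (abusing notation and still calling it $L(\lambda)$) yields the complete list $\{L(\lambda)\mid \lambda\in\Lambda^\pp_l(m|n,r)\}$ of pairwise non-isomorphic irreducibles, which is exactly the statement to be proved.

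I expect no real obstacle in this argument: the heavy lifting has already been done in Theorem \ref{kfie} (presentation of $S_{q,F}$), Proposition \ref{cpol} (characterisation of polynomial supermodules by the vanishing ideal $\ker\eta_{r,F}$), and Theorem \ref{cirr} (classification on the $U_{q,F}$-side). The one point worth verifying carefully is that an arbitrary finite dimensional irreducible $S_{q,F}$-supermodule really decomposes into the $\Lambda(m|n,r)$-weight spaces of its inflation; this is immediate from Lemma \ref{reab}(1)--(2) because $1=\sum_{\lambda\in\Lambda(m|n,r)}1_\lambda$ is a decomposition of the identity into orthogonal idempotents in $S_{q,F}$, so every $S_{q,F}$-supermodule is automatically a weight supermodule with weights in $\Lambda(m|n,r)$.
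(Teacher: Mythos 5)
Your proposal is correct and follows essentially the same route as the paper: inflate irreducible $S_{q,F}$-supermodules along $\eta_{r,F}$ to polynomial irreducible weight $U_{q,F}$-supermodules of degree $r$, use Proposition \ref{cpol} for the converse direction, and then invoke Theorem \ref{cirr} to read off the index set $\Lambda^\pp_l(m|n,r)$. The only difference is cosmetic: the paper invokes Proposition \ref{ciir} explicitly to identify the inflation with some $L(\lambda)$, whereas you spell out (via Lemma \ref{reab}(1)--(2) and the orthogonal idempotent decomposition $1=\sum_\lambda 1_\lambda$) why the inflation is automatically a weight supermodule, a detail the paper leaves implicit.
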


\begin{proof} By Proposition \ref{cpol} and Theorem \ref{cirr}, the irreducible supermodules in the set are all irreducible 
$S_{q,F}(m|n,r)$-supermodules. Since every irreducible $S_{q,F}(m|n,r)$-supermodule $L$ is naturally a polynomial irreducible $U_{q,F}$-supermodule of degree $r$ by inflation, it must be of the form $L\cong L(\la)$ by Proposition \ref{ciir}. Now apply Theorem \ref{cirr} to see $\la\in \Lambda^\pp_l(m|n,r)$.
 \end{proof}
 
 \begin{remark} (1)
When $m+n\geq r$, a classification is given in \cite{dgw1, dgw2} without using representations of the quantum supergroup. See also a comparison of the index sets in  \cite[Theorem B.3]{dgw1} in this case. Note that the theorem above has also generalised the classification loc. cit. to the $m+n<r$ case.

(2) The theorem above is a quantum version of \cite[Lemma 5.4]{bru}.

\end{remark}
\begin{corollary}If $q$ is not a root of unity (i.e., if $l=\infty$), then $S_{q,F}(m|n,r)$ is semisimple with
 irreducible representations labelled by $\Lambda^+(r)_{m|n}$ (see \eqref{spartition}).
\end{corollary}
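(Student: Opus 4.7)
The plan is to derive both assertions directly from material already in place, so the proof reduces essentially to a citation of two standard facts.

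For the parametrization of irreducibles, I apply Theorem \ref{classification} in the special case $l=\infty$: the pairwise non-isomorphic irreducible $S_{q,F}(m|n,r)$-supermodules are indexed by $\Lambda^\pp_\infty(m|n,r)$. The discussion just before \eqref{spartition} has already identified this index set with $\Lambda^+(r)_{m|n}$ via the map that concatenates $\la^{(0)}$ with the conjugate of $\la^{(1)}$, using the fact (immediate from the definition \eqref{x_i} of the $x_i$) that $j_\infty$ returns the length of the partition. So the indexing statement is really just an unpacking of Theorem \ref{classification}.

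For semisimplicity, I exploit the definition \eqref{Tmnr}, namely $S_{q,F}(m|n,r)=\End_{H_{q^2,F}(r)}(T)$ with $T=\bigoplus_{\la\in\Lambda(m|n,r)}\xy_\la H_{q^2,F}$. Since $q$ is not a root of unity, neither is $q^2$, so $[k]_{q^2}!=\prod_{i=1}^{k}(q^{2i}-1)/(q^2-1)\neq 0$ for every $k\geq 1$. The standard semisimplicity criterion for the type-$A$ Iwahori--Hecke algebra then ensures that $H_{q^2,F}(r)$ is semisimple.

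The conclusion now follows from the elementary Wedderburn fact that the endomorphism algebra of a finite-dimensional module over a finite-dimensional semisimple $F$-algebra is itself semisimple. Indeed, decomposing $T\cong\bigoplus_i S_i^{\oplus n_i}$ into its isotypic components gives
\begin{equation*}
S_{q,F}(m|n,r)\;\cong\;\prod_i M_{n_i}\bigl(\End_{H_{q^2,F}(r)}(S_i)\bigr),
\end{equation*}
a finite product of matrix algebras over the division rings $\End_{H_{q^2,F}(r)}(S_i)$ (Schur's lemma). There is no real obstacle: both ingredients needed --- Theorem \ref{classification} together with the identification of $\Lambda^\pp_\infty(m|n,r)$, and the semisimplicity of $H_{q^2,F}(r)$ when $q^2$ is not a root of unity --- are already at hand.
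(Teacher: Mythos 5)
Your argument is correct. The paper states this corollary without an explicit proof, so there is no given argument to compare against; you supply the natural one. Both halves are sound: the labelling claim is exactly Theorem~\ref{classification} specialised to $l=\infty$ together with the identification of $\Lambda^\pp_\infty(m|n,r)$ with $\Lambda^+(r)_{m|n}$ that the paper makes in the sentence containing \eqref{spartition} (using $j_\infty=\ell$), and the semisimplicity follows from the definition \eqref{Tmnr} of $S_{q,F}(m|n,r)$ as $\End_{H_{q^2,F}(r)}$ of a finite-dimensional module, the standard Dipper--James criterion that $H_{q^2,F}(r)$ is semisimple when $q^2$ is not a root of unity, and the Wedderburn observation that the endomorphism ring of a finite-dimensional module over a finite-dimensional semisimple algebra is a finite product of matrix rings over division rings. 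One could alternatively argue via the $\mathbb Q(\up)$-result of \cite{dr} cited just before the corollary, or via the Weyl supermodules $V(\lambda)$ constructed in the same section, but your endomorphism-algebra route is at least as direct.
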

%


We will construct irreducible ${S}_{q,F}(m|n,r)$-supermodules directly in the category $S_{q,F}$-{\bf mod} of finite dimensional $S_{q,F}$-supermodules. In this category, every module $V$ is a weight module in the sense that $V=\oplus_{\la\in\La(m|n,r)}1_\la V$, where $1_\lambda
=\eta_{r, F}({K\brack\lambda})=\phi_{\la,\la}^1$ are weight idempotents. 
In particular, $S_{q,F}(m|n,r)$ itself has a direct 
sum decomposition into projective modules
\begin{equation}\notag
\begin{aligned}
S_{q,F}(m|n,r)=\oplus_{\lambda\in\Lambda(m|n,r)} S_{q,F}(m|n,r) 1_\lambda.
\end{aligned}
\end{equation}

We define analogously the positive part, negative part and zero part $ { S}_{q,F}^+, { S}_{q,F}^-,{ S}_{q,F}^0$ 
for ${ S}_{q,F}(m|n,r)$  which are generated, respectively, by 
$\tte^{(M)}_{a,b}, a<b, M\geq 0;  $ $\tte^{(M)}_{a,b}, a>b,M\geq 0; $ 
$ {\ttk_a\brack t},t\geq 0$, $a\in[1,m+n]$. We may also regard them as homomorphic images of $U_{q,F}^+$, $U_{q,F}^-$, $U_{q,F}^0$, respectively. In particular,
these are subsuperslgebras with identity 1.

Let $I^+$
denote the ideal of $ { S}_{q,F}^+ $ generated by all $\tte^{(M)}_{a,b}, a<b, M> 0$ and define,
for $\lambda\in  \Lambda(m|n,r),$
$$V(\lambda)=S_{q,F} 1_\lambda /S_{q,F}I^+1_\lambda.$$
We may also define the notion of highest weight module in this category.
Thus, if $v$ is a highest weight vector of an $S_{q,F}$-module, then $I^+.v=0$. Call a highest weight module $V$ of highest weight $\la$ to be {\it universal} if every highest weight module with highest weight $\la$ is a homomorphic image of $V$  (cf. \cite[Lem. 3.15]{Br}).
%
%
\begin{theorem} The ${ S}_{q,F}(m|n,r)$-supermodule  $V(\lambda)$ is nonzero  if and only if $\lambda\in \Lambda^\pp_l(m|n,r).$ Moreover, every such a $V(\lambda)$ is an indecomposable universal highest weight supermodule and has a  unique irreducible quotient
$L(\lambda)$.
\end{theorem}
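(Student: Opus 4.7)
The plan is to treat $V(\lambda)$ as a quantum standard (Weyl) supermodule and extract its highest-weight properties from the triangular decomposition of $S_{q,F}(m|n,r)$ implicit in Theorem~\ref{kfie}. Write $v_\lambda$ for the image of $1_\lambda$ in $V(\lambda)$. By Lemma~\ref{reab}, $1_\mu\cdot 1_\lambda=\delta_{\mu\lambda}1_\lambda$ and $S^0_{q,F}$ acts on $1_\lambda$ by scalars, so $v_\lambda$ is a weight vector of weight $\lambda$, and $I^+\cdot v_\lambda=0$ by construction. Whenever $v_\lambda\neq 0$, $V(\lambda)=S_{q,F}\cdot v_\lambda$ is therefore a highest weight supermodule of highest weight $\lambda$. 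Universality follows from the standard argument: given any highest weight supermodule $M$ generated by $v$ of weight $\lambda$, the $S_{q,F}$-linear map $S_{q,F}1_\lambda\to M$, $s\,1_\lambda\mapsto sv$, factors through $V(\lambda)$ because $I^+\cdot v=0$ forces $S_{q,F}I^+1_\lambda$ into the kernel.

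Next I would use the triangular factorisation $S_{q,F}=S^-_{q,F}S^0_{q,F}S^+_{q,F}$ that falls out of the spanning set $\widetilde{\bf Y}$ of Theorem~\ref{kfie}: modulo $S_{q,F}I^+1_\lambda$, this collapses to $V(\lambda)=S^-_{q,F}\cdot v_\lambda$. Since every nontrivial product of divided powers $\tte^{(M)}_{a,b}$ with $a>b$, $M>0$ strictly lowers the weight by a positive integral combination of simple roots, every weight of $V(\lambda)$ lies below $\lambda$ except for $\lambda$ itself, and $V(\lambda)_\lambda=Fv_\lambda$. Any proper submodule therefore avoids $v_\lambda$ and is contained in $\bigoplus_{\mu<\lambda}V(\lambda)_\mu$; the sum of all proper submodules is hence still proper. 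This yields a unique maximal submodule, a unique irreducible quotient $L(\lambda)$, and indecomposability of $V(\lambda)$ (a nontrivial direct sum would have to place $v_\lambda$ in a single summand, which then generates all of $V(\lambda)$).

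The ``nonzero iff'' assertion now reduces to Theorem~\ref{classification}. If $\lambda\in\Lambda^\pp_l(m|n,r)$, that theorem supplies an irreducible $S_{q,F}$-supermodule of highest weight $\lambda$; by universality it is a quotient of $V(\lambda)$, so $V(\lambda)\neq 0$. Conversely, if $V(\lambda)\neq 0$ then its unique irreducible quotient is an irreducible $S_{q,F}$-supermodule of highest weight $\lambda$, forcing $\lambda\in\Lambda^\pp_l(m|n,r)$ again by Theorem~\ref{classification}.

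The main technical point I anticipate is justifying the triangular decomposition and the one-dimensionality of $V(\lambda)_\lambda$ precisely enough; this requires the explicit $\sZ$-basis $\widetilde{\bf Y}$ built into the proof of Theorem~\ref{kfie}, together with the weight bookkeeping for $S_{q,F}$-supermodules already developed in \S5. Everything else is formal triangular-decomposition yoga, paralleling the Verma/Weyl construction for $U_{q,F}$ and reducing, at the end, to the classification already in hand.
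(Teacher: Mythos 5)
Your proof is correct and follows essentially the same route as the paper: universality via the surjection $S_{q,F}1_\lambda\to M$ annihilating $S_{q,F}I^+1_\lambda$, then the nonvanishing criterion by combining universality with Theorem~\ref{classification}. You flesh out more explicitly the triangular decomposition, the one-dimensionality of $V(\lambda)_\lambda$, and the indecomposability argument, all of which the paper leaves implicit behind the phrase ``$V(\lambda)$ has an irreducible head of highest weight $\lambda$.''
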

\begin{proof}If $V(\la)\neq0$ then, for any 
highest weight supermodule $V$ of highest weight $\la$, choose a maximal vector
   $\fkm_\lambda\in V_\la$.
Define a map $f$ from  the left ideal $S_{q,F} 1_\lambda$ to $V$ by the rule:
$ f(s 1_\lambda)=(s 1_\lambda). \fkm_\lambda.$
Clearly,  $f$ is a (homogeneous) supermodule homomorphism.
Note that $ f(1_\lambda)= 1_\lambda. \fkm_\lambda= \fkm_\lambda$   
and, for all 
$ s\in S_{q,F}, $ we have 
$ s. \fkm_\lambda= s.(1_\lambda. \fkm_\lambda)= (s1_\lambda). \fkm_\lambda= f(s 1_\lambda).$
Hence, $f$ is a surjection. Since
$$f(S_{q,F}I^+ 1_\lambda)=(S_{q,F}I^+ 1_\lambda).\fkm_\lambda
= S_{q,F}I^+ .\fkm_\lambda=0,$$
we see that $S_{q,F}I^+ 1_\lambda\subseteq \ker f$.
Thus, $f$  induces an epimorphism $ {\bar f}: V(\lambda)\rightarrow V$. This proves the universal property.

If $\lambda\in \Lambda^\pp_l(m|n,r)$, the argument above for $V=L(\lambda)$ shows that $L(\lambda)$
is  a  homomorphic image of   $V(\lambda).$ Hence, $V(\lambda)\neq 0 $. Conversely,  if $V(\lambda)\neq 0 $, then $V(\la)$ has an irreducible head of highest weight $\la$ which must be isomorphic to $L(\la)$. Hence, $\la\in \Lambda^\pp_l(m|n,r)$ by Theorem \ref{classification}.
\end{proof}
\begin{remarks}(1)
The supermodules $V(\lambda)$ play the role of Weyl modules. It would be interesting to determine the formal character of $V(\la)$. 

(2) If we order the set $\Lambda^\pp_l(m|n,r)$ as $\la^{(1)},\la^{(2)},\cdots,\la^{(N)}$ such that $\la^{(i)}\leq\la^{(j)}$ implies $i>j$, then we may construct a filtration of ideals
$$0\subseteq S_{q,F}f_1 S_{q,F}\subseteq S_{q,F}f_2 S_{q,F}\subseteq \cdots S_{q,F}f_NS_{q,F}\subseteq S_{q,F},$$
where $f_i=\sum_{j=1}^i1_{\la^{(i)}}$. Note that, if $n=0$, then the sequence is a heredity chain for the quasi-hereditary algebra $S_{q,F}(m,r)$.

\vspace{.1cm}\noindent
{\bf Claim:} $S_{q,F}f_NS_{q,F}= S_{q,F}$. \vspace{-1ex}
\begin{proof} It suffices to prove ${S}_{q,F} 1_\la{S}_{q,F}\subseteq S_{q,F}f_NS_{q,F}$ for all $\la\in\La(m|n,r)$.
We apply induction on the poset structure of $\La(m|n,r)$. There is nothing to prove if $\la\in \Lambda_l^{++}(m|n,r)$. In particular, the assertion is true for largest element $(r,0,\ldots,0|0,\ldots,0)$. 
Suppose $ \lambda\in \Lambda(m|n,r)/\Lambda_l^{++}(m|n,r).$ Then 
$V(\lambda)={S}_{q,F} 1_\lambda /{S}_{q,F}I^+ 1_\lambda=0.$
So  $1_\lambda\in {S}_{q,F}I^+ 1_\lambda .$
Hence, there exist $x_i\in{S}_{q,F},y_i\in I^+$ with
$1_\lambda=\sum_ix_iy_i1_\lambda.$
But, by Lemma \ref{reab}, $x_iy_i1_\lambda=x_i1_{\lambda^{(i)}} y_i, $ where $\lambda^{(i)}\in  \Lambda(m|n,r),$  and ${\lambda^{(i)}} >{\lambda}.$ 
Hence, by induction,  $ {S}_{q,F} 1_\lambda  {S}_{q,F} \subseteq   \sum_i{S}_{q,F}1_{\lambda^{(i)}}  {S}_{q,F}\subseteq S_{q,F}f_NS_{q,F}.$
\end{proof}

(3) In \cite{dgw1, dgw2}, a classification is done by using the defect groups of primitive idempotents. By (2), we see that the non-equivalent primitive idempotents $e_1,e_2,\cdots,e_N$ can be selected to satisfy the condition $e_i1_{\la^{(i)}}=e_i$ for every $i$. It would be interesting to know if this condition can be used to determine the defect group of $e_i$.
\end{remarks}

%
%
%
\section{The Mullineux map and Serganova's algorithm}

%
%
%
In this section, we keep the notations $F, q,l$ as defined at the beginning of \S6.

Recall the $R$-algebra automorphism $(\;\;)^\sharp$ defined in \eqref{sharpH}. For any ${H}_{q^2,F}$-module $W$,
define a new ${H}_{q^2,F}$-module $W^\sharp$ by twisting the action via $(\;\;)^\sharp$:
$h.v:=h^\sharp\, v.$ Note that, if $q^2= 1$, i.e., if ${H}_{q^2,F}=F\fS_r$ is the group algebra, then $W^\sharp\cong  W\otimes \sgn$.

%
%
%
Let $\La^+_l(r) $ be the set of all $l$-restricted partitions of $r$. Then, by \cite[\S\S4,6]{DJ86}, this set indexes the isomorphism classes of irreducible ${H}_{q^2,F}$-module. Let $D_\lambda,\lambda\in\La_l^+(r) $ denote a representative from the class $\la$; see \eqref{schf} below for the definition of $D_\la$. Thus, $\{D_\lambda\}_{\lambda\in\La_l^+(r)} $ forms a complete set of pairwise non-isomorphic irreducible ${H}_{q^2,F}$-modules.

Define  the Mullineux  conjugation map 
\begin{equation}\label{Mmap}
{\ttM}: \La^+_l(r) \longrightarrow \La_l^+(r),\quad\la\longmapsto \ttM(\la)
\end{equation}
by mimicking the definition on \cite[p.32]{bru} with prime $p$ replaced by $l$ (see also remarks in the second paragraph on \cite[p.556]{Br}). We omit the details here. Note that this is the transpose of the original definition from \cite{M}.

\begin{remarks}\label{Xu} (1) Irreducible $p$-modular representations of the symmetric group $\mathfrak S_r$ are indexed by $p$-regular partitions of $r$. Mullineux conjectured $(D_\lambda)^\sharp\cong
 D_{\ttM(\lambda)}$. This conjecture was first proved by Ford and Kleshchev \cite{FK} building on \cite{K}. Using representations of supergroups, Brundan and Kujawa \cite{bru} gave an excellent new proof for the original conjecture. See also \cite{SW} for the ortho-symplectic super case.

The quantum version of this conjecture was first proved by Brundan \cite{Br}. The main method used there is the Branching Rule. However, it would be interesting to seek a proof of using quantum supergroups and $q$-Schur superalgebras, generalising the idea in \cite{bru} to the quantum case. In the next two sections, we will use the techniques developed in the paper to prove the quantum version of the Mullineux conjecture.

(2) There is another algorithm due to Xu \cite{xu} which is also independent of the primality of $p$. Thus, \cite[Thm 6.1]{bru} continue to hold for all $l>0$.
\end{remarks}


%
%
%

Recall the Lusztig $\mathcal Z$-form $U_{\up,\mathcal Z}(m|n)$, the isomorphism $\sigma$ considered in \eqref{sigma}, its specialisation $\sigma_F$ on
$ U_{q,F}=U_{\up,\mathcal Z}({n|m})\otimes_{\mathcal Z}F$, and the super dot product
$(\;\,,\,\;)_s$ on $\mathbb Z^{m+n}$ introduced at the end of the introduction.
We first generalise Serganova's algorithm for the supergroup $GL(m|n)$ given in \cite[Lem.~4.2, Thm 4.3]{bru})
 to the quantum hyperalgebra $U_{q,F}$. 
 
\begin{proposition}\label{lowe}
Let $\lambda\in \mathbb Z^{m|n}_\pp$ and choose a nonzero vector
$\mathfrak{m}_\lambda\in L(\lambda)_\lambda$ for the irreducible
$U_{q,F}$-module $L(\lambda)$. Fix the following ordering on positive odd roots:
$$\beta_1=\alpha_{m,m+1},\cdots, \beta_m=\alpha_{1,m+1}, \beta_{m+1}=\alpha_{m,m+2},\cdots,\beta_{2m}=\alpha_{1,m+2},\cdots, \beta_{mn}=\alpha_{1,m+n}.$$ 
Define recursively $\mathfrak{m}_\lambda^{(0)}=\mathfrak{m}_\lambda$ and, for $1\leq k\leq m+n$,
$$
\mathfrak{m}_\lambda^{(k)} = \left\{ \begin{array}{lll}
             \mathfrak{m}_\lambda^{(k-1)},  & \mbox{  if  }
                       l\mid ( \wt(\mathfrak{m}_\lambda^{(k-1)}),\beta_k)_s,\\
              E_{-\beta_k}\mathfrak{m}_\lambda^{(k-1)}, &   \mbox{  if  }l\nmid
               ( \wt(\mathfrak{m}_\lambda^{(k-1)}),\beta_k)_s.
           \end{array}
         \right.
$$
Then we have
\begin{equation}\label{maxa}
\begin{cases}
(1)&\mathfrak{m}_\lambda^{(k)}\neq 0,\quad 0\leq k\leq m+n\\
(2)&E_{i,i+1}^{(M)}\, \mathfrak{m}_\lambda^{(k)}=0 ,\quad  1\leq i\leq m+n-1,\;  i \neq m,\\
(3)&E_{-\beta_i}\, \mathfrak{m}_\lambda^{(k)}=0=E_{\beta_j}\, \mathfrak{m}_\lambda^{(k)},   \quad 1\leq i\leq k< j\leq mn.
\end{cases}
\end{equation}
\end{proposition}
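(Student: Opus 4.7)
The plan is induction on $k$. At $k=0$, $\mathfrak{m}_\lambda^{(0)} = \mathfrak{m}_\lambda$ is nonzero by choice; (2) holds directly from the maximality of $\mathfrak{m}_\lambda$; and in (3) the $i$-range is vacuous, while $E_{\beta_j}\mathfrak{m}_\lambda = 0$ for every odd positive root $\beta_j$ because $E_{\beta_j}$ has weight $\beta_j > 0$ and $\lambda$ is the highest weight of $L(\lambda)$.

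For the inductive step, set $\mu := \wt(\mathfrak{m}_\lambda^{(k-1)})$, write $\beta_k = \epsilon_a - \epsilon_b$ with $a \leq m < b$, and put $E := E_{a,b}$, $F := E_{b,a}$. The inductive hypothesis (3) at $j=k$ gives $E\mathfrak{m}_\lambda^{(k-1)} = 0$, and $F^2 = 0$ by Proposition~\ref{ppco}(0). Specialising Proposition~\ref{cqrv}(4) to $M = N = 1$ and applying \eqref{wt space1} together with the identity $(\mu,\beta_k)_s = \mu_a + \mu_b$ (valid since $\bar a + \bar b = \bar 1$) yields the key $\mathfrak{gl}(1|1)$-type relation
\begin{equation}\label{keyEF}
EF\mathfrak{m}_\lambda^{(k-1)} = [(\mu,\beta_k)_s]_q\,\mathfrak{m}_\lambda^{(k-1)}.
\end{equation}

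In Case~B, where $l \nmid (\mu,\beta_k)_s$, the scalar in \eqref{keyEF} is nonzero, forcing $F\mathfrak{m}_\lambda^{(k-1)} \neq 0$, which gives (1); the relation $E_{-\beta_k}\mathfrak{m}_\lambda^{(k)} = F^2\mathfrak{m}_\lambda^{(k-1)} = 0$ is automatic. For the remaining assertions of (2) and (3), I would commute each relevant operator $X \in \{E_{i,i+1}^{(M)}\}_{i \neq m} \cup \{E_{\beta_j}\}_{j > k} \cup \{E_{-\beta_i}\}_{i \leq k-1}$ past $F$ using Propositions~\ref{ppco} and~\ref{cqrv}. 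The ordering of the $\beta_k$'s prescribed in the statement---descending first index within each fixed second index---is precisely what guarantees that every correction term produced by such a commutation is itself a root vector in the vanishing set supplied by the inductive hypothesis.

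In Case~A, where $l \mid (\mu,\beta_k)_s$ and $\mathfrak{m}_\lambda^{(k)} = \mathfrak{m}_\lambda^{(k-1)}$, the properties (1), (2), and the halves of (3) for $i < k$ and $j > k$ carry over trivially, so the crucial remaining point---and the main obstacle of the proof---is that $F\mathfrak{m}_\lambda^{(k-1)} = 0$. This is the quantum root-of-unity analog of the atypical fixed-point phenomenon in Serganova's odd reflection theorem \cite[Thm.~4.3]{bru}. Setting $v := F\mathfrak{m}_\lambda^{(k-1)}$, one has $Ev = 0$ by \eqref{keyEF}, and the Case~B commutation computation shows that if $v \neq 0$ then $v$ would satisfy all the vanishings of (2)--(3) at weight $\mu - \beta_k$. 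The plan for ruling this out is to lift to an appropriate quantum Kac-module presentation of $L(\lambda)$ (extending \cite[Lem.~4.2]{bru} to the $U_{q,F}$ setting) and invoke uniqueness of its simple quotient; the subtlety is that ordinary irreducibility of $L(\lambda)$ with respect to the standard Borel does not by itself forbid two independent twisted-maximal vectors of distinct weights.
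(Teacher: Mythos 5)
Your Case~B handling is essentially the paper's Case~1: the $\mathfrak{gl}(1|1)$-type relation \eqref{keyEF} is derived correctly from Proposition~\ref{cqrv}(4) and \eqref{wt space1}, and the sketched commutator pushing for (2)--(3) matches what the paper carries out in detail (the ordering of the $\beta_k$ does indeed make all correction terms fall into the inductively-vanishing set).

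However, your Case~A contains a genuine gap, and the approach you propose to close it is exactly the one the paper deliberately avoids. You suggest "lifting to an appropriate quantum Kac-module presentation of $L(\lambda)$ (extending \cite[Lem.~4.2]{bru} to the $U_{q,F}$ setting)" and invoking uniqueness of its simple quotient. But the introduction of this paper flags precisely this obstruction: the twisted highest weight theory of \cite[\S4]{bru} associated with $\fS_m\times\fS_n$-coset representatives "does not seem to have a quantum analogue," and constructing it is not a routine extension. You already sense the difficulty ("ordinary irreducibility ... does not by itself forbid two independent twisted-maximal vectors of distinct weights"), but that sentence points at a problem the paper's actual argument never encounters.

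What the paper does instead is entirely elementary and stays inside $L(\lambda)$: assume $v := E_{-\beta_k}\mathfrak m_\lambda^{(k-1)}\neq 0$; then irreducibility gives $L(\lambda)=U_{q,F}v$. Using the specific PBW ordering (negative even roots, then the odd negatives $E_{-\beta_i}$ for $i>k$, then the odd positives $E_{\beta_i}$ for $i\le k$, then Cartan, then the remaining odd vectors, then positive even roots) together with the already-established vanishings of (2)--(3) on $v$, one finds that $U_{q,F}v$ is spanned by monomials of the form $\bigl(\prod E_{b,a}^{(A_{b,a})}\bigr)\bigl(\prod_{i>k} E_{-\beta_i}^{\sigma_i}\bigr)\bigl(\prod_{i\le k} E_{\beta_i}^{\sigma_i}\bigr).v$. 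A weight bookkeeping argument (splitting $\epsilon_c-\epsilon_a$ into its $\mathfrak{gl}_m$ and $\mathfrak{gl}_n$ pieces and using that the total even-root contribution must vanish) then shows that the only such monomial landing in $L(\lambda)_\mu$ with $\mu=\wt(\mathfrak m_\lambda^{(k-1)})$ is $E_{\beta_k}.v$ itself, so $L(\lambda)_\mu = \span\{E_{\beta_k}v\}$. Since $\mathfrak m_\lambda^{(k-1)}\in L(\lambda)_\mu$ is nonzero, the scalar $[(\mu,\beta_k)_s]_q$ in \eqref{keyEF} must be nonzero, which contradicts the Case~A hypothesis $l\mid(\mu,\beta_k)_s$. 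No Kac module, no auxiliary twisted-Verma module, and no uniqueness-of-quotient theorem is needed: the inductive vanishings of $v$ and a PBW weight count inside the irreducible module itself do all the work. Your proposal has the correct target (show $v=0$) but misidentifies the mechanism; as written, Case~A is not proved.
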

\begin{proof}
We apply induction on $k.$ The case for $k=0$ is clear since
$\mathfrak{m}_\lambda^{(0)}=\mathfrak{m}_\lambda$ is a highest weight vector. Assume now $k\geq1$ and that (1)--(3) hold for $k-1.$

{\bf Case 1.} Assume $l\nmid( \wt(\mathfrak{m}_\lambda^{(k-1)}),\beta_k)_s$. Then $\mathfrak{m}_\lambda^{(k)}=
E_{-\beta_k}\mathfrak{m}_\lambda^{(k-1)}$ and $E_{\beta_k}\,\mathfrak{m}_\lambda^{(k-1)}=0$ by induction. Thus, by
Proposition \ref{cqrv}(4)
\begin{equation}\label{posi}
\begin{aligned}
E_{\beta_k}\, \mathfrak{m}_\lambda^{(k)}
 &=E_{\beta_k}\,E_{-\beta_k}\mathfrak{m}_\lambda^{(k-1)}=-E_{-\beta_k}E_{\beta_k}\,\mathfrak{m}_\lambda^{(k-1)}
 +\frac{K_{\beta_k}-K_{\beta_k}^{-1}}{q-q^{-1}}\mathfrak{m}_\lambda^{(k-1)}\\
&=\frac{q^{ ( \wt(\mathfrak{m}_\lambda^{(k-1)}),\beta_k)_s}-q^{ -(
\wt(\mathfrak{m}_\lambda^{(k-1)}),\beta_k)_s}}
 {q-q^{-1}}\mathfrak{m}_\lambda^{(k-1)} \neq 0.
\end{aligned}
\end{equation}
Hence, $\mathfrak{m}_\lambda^{(k)}\neq 0$, proving (1). 

To see (2), we assume $\beta_k=\epsilon_c-\epsilon_d$ with $1\leq c\leq m<d\leq m+n$. If $i+1\leq c$ or $c<i<i+1<d$, then Proposition \ref{cqrv}(1) and induction imply (2);
if $c=i<i+1<d$, then $i=c<m$ and, by Proposition \ref{cqrv}(3), either $ E_{i,i+1} E_{-\beta_k}=xE_{i,i+1}+yE_{-\beta_{k-1}}$ or, for $M>1$,
  $ E_{i,i+1}^{(M)} E_{-\beta_k}=x'E_{i,i+1}^{(M)}+y'E_{i,i+1}^{(M-1)}$. So (2) follows from induction. 

Finally, if $k<j$ and $\beta_k=\epsilon_a-\epsilon_b$, $\beta_j=\epsilon_c-\epsilon_d$, then either $b<d$ or $b=d$,$a>c$. For $b<d$, applying $\Upsilon$ to Proposition \ref{cqrv}(1)(3)(5) (for $b=d$,$a>c$, using directly Proposition \ref{cqrv}(2)) and induction gives $E_{\beta_j}\, \mathfrak{m}_\lambda^{(k)}=0$ in (3). To verify $E_{-\beta_i}\, \mathfrak{m}_\lambda^{(k)}=0$ for all $1\leq i\leq k$, since $E_{-\beta_k}^2=0$, it suffices to consider the
commutator formulas for $E_{-\beta_i} E_{-\beta_k}$ for $1\leq i<k$. Suppose 
$E_{-\beta_{i}}=E_{b,a}, E_{-\beta_{k}}=E_{d,c},$ for some $1\leq a,c\leq m<b,d\leq m+n$. Then $i<k$ implies $ b<d \text{ or } b=d, a>c$.
If $b<d$, then $c<b<d$ and, by applying the automorphism $\varpi$ defined in (\ref{varpi}) to Proposition \ref{ppco}(1)(2)(4), we see that
$E_{-\beta_{i}}E_{-\beta_{k}}
=E_{b,a}E_{d,c}=xE_{b,a}+yE_{b,c}$. 
 If
 $b=d, a>c$ then $c<a<b=d$ and, by applying $\Upsilon$ to Proposition \ref{ppco} (2), we have
$E_{-\beta_{k}}E_{-\beta_{i}}=E_{d,c}E_{b,a}
=-q_cE_{b,a}E_{d,c}=-q_cE_{-\beta_{i}}E_{-\beta_{k}}.$ In both cases,  $E_{-\beta_i}\, \mathfrak{m}_\lambda^{(k)}=0$ follows from induction.

{\bf Case 2.} Assume $l\mid( \wt(\mathfrak{m}_\lambda^{(k-1)}),\beta_k)_s$. Then $\mathfrak{m}_\lambda^{(k)}=
\mathfrak{m}_\lambda^{(k-1)}.$ By induction, it remains to prove $E_{-\beta_k}\mathfrak{m}_\lambda^{(k-1)}=0.$ Suppose
$E_{-\beta_k}\mathfrak{m}_\lambda^{(k-1)}\neq0$. Then $L(\lambda)=U_{q,F}(E_{-\beta_k}\mathfrak{m}_\lambda^{(k-1)})$ and so
$\mathfrak{m}_\lambda^{(k-1)}\in
U_{q,F}(E_{-\beta_k}\mathfrak{m}_\lambda^{(k-1)}).$ 
 
 We claim that $\mathfrak{m}_\lambda^{(k-1)}\in
\span\{E_{\beta_k}E_{-\beta_k}\mathfrak{m}_\lambda^{(k-1)}\}.$ 
Indeed, if $\Phi^+_{\bar0}$ denotes the subset of even roots in $\Phi^+$, by \eqref{ubas} and the commutation formulas of Proposition  \ref{ppco} and
\ref{cqrv}, we see that $U_{q,F}$ is spanned by the elements
\begin{equation*}\label{newb}
\begin{aligned}
\left\{
\prod_{\epsilon_a-\epsilon_b\in\Phi^+_{\bar0}}\!\!E_{b,a}^{(A_{b,a})}
  \prod_{i=k+1}^{mn}E_{-\beta_i}^{\sigma_i} 
 \prod_{i=1}^{k}E_{\beta_i}^{\sigma_i}
\prod_{a=1}^{m+n}\left(K_a^{\delta_a}  {K_a\brack \mu_a}\right)  
 \prod_{i=1}^{k}E_{-\beta_i}^{\sigma_i'} 
 \prod_{i=k+1}^{mn}E_{\beta_i}^{\sigma_i'}
 \prod_{\epsilon_a-\epsilon_b\in\Phi^+_{\bar0}}\!\!E_{a,b}^{(A_{a,b})}
 \right \},\\
\end{aligned}
\end{equation*}where
$ A\in \sM(m|n),\delta_a\in\{0,1\}$ with $\{\sigma_i,\sigma_i'\}=\{A_{\beta_i},A_{-\beta_i}\}$ and $\mu_a=A_{a,a}$. 
By the proof for (2) and (3) above, the elements 
$ \prod_{i=1}^{k}E_{-\beta_i}^{\sigma_i'} 
 \prod_{i=k+1}^{mn}E_{\beta_i}^{\sigma_i'}
 \prod_{\epsilon_a-\epsilon_b\in\Phi^+_0}\!\!E_{a,b}^{(A_{a,b})}
$ vanish $E_{-\beta_k}\mathfrak{m}_\lambda^{(k-1)}$. Thus, we have
\begin{equation}\notag
\begin{aligned}L(\lambda)=\span
\left\{
\prod_{\epsilon_a-\epsilon_b\in\Phi^+_{\bar0}}E_{b,a}^{(A_{b,a})}
  \prod_{i=k+1}^{mn}E_{-\beta_i}^{\sigma_i} 
 \prod_{i=1}^{k}E_{\beta_i}^{\sigma_i}
 .E_{-\beta_k}\mathfrak{m}_\lambda^{(k-1)}\right\} .
\end{aligned}
\end{equation}
We now consider the weight space 
$L(\lambda)_\mu$ with $\mu={\wt(\mathfrak{m}_\lambda^{(k-1)})}$. Since a spanning vector has its weight of the form $\mu+\sum_{\epsilon_a-\epsilon_b\in\Phi^+_0}A_{b,a}(\epsilon_b-\epsilon_a)
-  \sum_{i=k+1}^{mn}{\sigma_i} {\beta_i}
+ \sum_{i=1}^{k}{\sigma_i}{\beta_i}
 -{\beta_k}$, such a vector in $L(\lambda)_\mu$ forces
\begin{equation}\label{eq2}
\begin{aligned}
\sum_{\epsilon_a-\epsilon_b\in\Phi^+_0}A_{b,a}(\epsilon_b-\epsilon_a)
=  \sum_{j=k+1}^{mn}{\sigma_j} {\beta_j}
- \sum_{i=1}^{k}{\sigma_i}{\beta_i}
 +{\beta_k}=:(\nu^{(0)},\nu^{(1)})\in\mathbb Z^{m+n}.
\end{aligned}
\end{equation}
Note the left hand side implies that $|\nu^{(0)}|=|\nu^{(1)}|=0$. This forces $\#X=\#Y$, where
$X=\{i\mid 1\leq i\leq k,\sigma_i\neq0\}$ and $Y=\{j\mid k< j\leq mn,\sigma_j\neq0\}\cup\{k\}$.
Suppose
$ \beta_{i}=\epsilon_{a_i}-\epsilon_{b_i},\beta_{j_i}=\epsilon_{c_i}-\epsilon_{d_i}$, where $i\mapsto j_i$ is a bijection from $X$ to $Y$ and $1\leq a_i,c_i\leq m<b_i,d_i\leq m+n$.
Then  $ \beta_{j_i}-  \beta_{i}=(\epsilon_{c_i}-\epsilon_{a_i})+(\epsilon_{b_i}-\epsilon_{d_i}).$
Thus, $i<j_i$ forces $m<b_i\leq d_i$ and so $\epsilon_{b_i}-\epsilon_{d_i}\in \Phi^+_{\bar0}.$
Hence,  \eqref{eq2} implies
\begin{equation}\notag
\begin{aligned}
\sum_{1\leq a<b\leq m}A_{b,a}(\epsilon_b-\epsilon_a)&=\nu^{(0)}=
\sum_{i\in X} (\epsilon_{c_i}-\epsilon_{a_i}),\\
\sum_{m<a<b\leq m+n}A_{b,a}(\epsilon_{b}-\epsilon_{a})&=\nu^{(1)}=\sum_{i\in X} (\epsilon_{b_i}-\epsilon_{d_i}).\\
\end{aligned}
\end{equation}
The second equality is possible unless both sides are zero. Thus, all $b_i=d_i$, forcing $c_i\leq a_i$. Hence, the first equality must be zero and so $c_i=a_i$ for all $i\in X$. Therefore, we must have
all $ A_{b,a}=0$ and $\sigma_i=\delta_{k,i}.$
Consequently, $ L(\la)_\mu=
\span\{E_{\beta_k}E_{-\beta_k}\mathfrak{m}_\lambda^{(k-1)}\},$ proving the claim. 

Since $ L(\la)_\mu\neq0$, the claim and
(\ref{posi}) force $\frac{q^{ (
\wt(\mathfrak{m}_\lambda^{(k-1)}),\beta_k)_s}-q^{ -(
\wt(\mathfrak{m}_\lambda^{(k-1)}),\beta_k)_s}} {q-q^{-1}}\neq 0$. This implies
$l\nmid( \wt(\mathfrak{m}_\lambda^{(k-1)}),\beta_k)_s$, contrary to the assumption for Case 2.
\end{proof}

Proposition \ref{lowe} gives a (bijecitve) map 
$$\widetilde{\ }:\mathbb Z_\pp^{m|n}\longrightarrow \mathbb Z_\pp^{m|n},\la\longmapsto\widetilde\la:=\wt(\mathfrak{m}_\lambda^{(mn)}),$$
cf.  \cite[(4.1)]{bru}. 
The construction of $\tilde\la$ from $\la$ is known as Serganova's algorithm. 

Assume now $r\leq m,n$. We define  the following two maps as in \cite[\S6]{bru}:
\begin{equation}\label{maxv}
\begin{aligned}
&x: &\La_l^+(r)\rightarrow   \Lambda^\pp(m|n,r),\quad  & \lambda \rightarrow x(\la)=(\lambda,0^{m-r}|0^n));\\
&y: &\La_l^+(r)\rightarrow   \Lambda^\pp(m|n,r),\quad  & \lambda \rightarrow y(\la)=(0^m|\lambda,0^{n-r})).
\end{aligned}
\end{equation}
Serganova's  algorithm together and Xu's algorithm  for the Mullineux map \eqref{Mmap} via the $j_l$ map defined in \eqref{j_l}) have the following relationship as revealed in \cite[Lem. 6.3]{bru}.

\begin{corollary}\label{6.3} If the $m,n\geq r$ and $\la\in\La_l^+(r)$, then $\widetilde{x(\la)}=y(\ttM(\la))$.
\end{corollary}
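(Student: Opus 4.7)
The plan is to track the sequence of weights $\wt(\mathfrak{m}_{x(\lambda)}^{(k)})$ for $k = 0, 1, \ldots, mn$ produced by Proposition \ref{lowe} applied to $L(x(\lambda))$, and identify the final weight with $y(\ttM(\lambda))$ via Xu's algorithm for the Mullineux map. I would group the $mn$ stages into $n$ \emph{rounds}: round $j$ (for $1 \leq j \leq n$) consists of the stages $k = (j-1)m + 1, \ldots, jm$, which process the odd roots $\beta_k = \alpha_{i,m+j}$ as $i$ runs through $m, m-1, \ldots, 1$. Each $E_{-\beta_k} = E_{m+j,i}$ only alters the weight at positions $i$ and $m+j$; combined with the chosen ordering, this guarantees that distinct rounds do not interact through odd positions and that within a round each even position is touched at most once. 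Moreover, since $\bar i = \bar 0$ and $\overline{m+j} = \bar 1$, the super dot product appearing in the criterion of \eqref{maxa} reduces to $(\wt(\mathfrak{m}^{(k-1)}), \beta_k)_s = \wt_i + \wt_{m+j}$.

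Define iteratively $\nu^{(0)} = \lambda$, extended by zeros to length $m$, and for $j \geq 1$ let $x^{(j)} = (x_1^{(j)}, \ldots, x_m^{(j)}) \in \{0,1\}^m$ be computed from $\nu^{(j-1)}$ via the recursion \eqref{x_i}; set $\nu^{(j)} = \nu^{(j-1)} - x^{(j)}$, so $j_l(\nu^{(j-1)}) = \sum_i x_i^{(j)}$. The Xu algorithm for the Mullineux map (see Remark \ref{Xu}(2) and \cite[Thm 6.1]{bru}) identifies $\ttM(\lambda)_j = j_l(\nu^{(j-1)})$. I would prove by induction on $j$ the assertion
\[
\wt(\mathfrak{m}_{x(\lambda)}^{(jm)}) = \bigl(\nu_1^{(j)}, \ldots, \nu_m^{(j)} \bigm| j_l(\nu^{(0)}), \ldots, j_l(\nu^{(j-1)}), 0, \ldots, 0\bigr).
\]
For the inductive step, within round $j$ the weight at position $m+j$ starts at $0$, and the weight at each position $i$ remains $\nu_i^{(j-1)}$ until the stage $k = (j-1)m + (m-i+1)$ that processes it. At that stage the accumulated weight at $m+j$ equals $\sum_{i' > i} x_{i'}^{(j)}$, so $(\wt, \beta_k)_s = \nu_i^{(j-1)} + \sum_{i'>i} x_{i'}^{(j)}$, which by \eqref{x_i} is nonzero mod $l$ precisely when $x_i^{(j)} = 1$. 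Accordingly $\mathfrak{m}^{(k)}$ is obtained from $\mathfrak{m}^{(k-1)}$ by applying $E_{-\beta_k}$ (decreasing $\wt_i$ by $1$ and increasing $\wt_{m+j}$ by $1$) iff $x_i^{(j)} = 1$, which completes the induction.

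Finally, $|\ttM(\lambda)| = r$ together with $|\nu^{(j)}| = |\nu^{(j-1)}| - j_l(\nu^{(j-1)})$ yields $|\nu^{(r)}| = r - |\ttM(\lambda)| = 0$, so $\nu^{(j)} = 0$ and $j_l(\nu^{(j-1)}) = 0 = \ttM(\lambda)_j$ for all $r < j \leq n$. Hence $\widetilde{x(\lambda)} = \wt(\mathfrak{m}_{x(\lambda)}^{(mn)}) = (0^m \mid \ttM(\lambda)_1, \ldots, \ttM(\lambda)_r, 0^{n-r}) = y(\ttM(\lambda))$. The main obstacle lies in the inductive step: one must carefully verify that the ordering of stages within a single round exactly implements the top-down recursion \eqref{x_i}, and treat the ``zero tail'' case where $\nu_i^{(j-1)} = 0$ for $i > \ell(\nu^{(j-1)})$; there the accumulated weight at position $m+j$ must also be $\equiv 0 \pmod l$ so that Serganova's algorithm correctly skips, which holds because \eqref{x_i} forces $x_i^{(j)} = 0$ at those indices.
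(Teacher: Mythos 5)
The paper does not actually prove this corollary: it merely notes that the quantum Serganova algorithm (Proposition \ref{lowe}) and Xu's algorithm have the stated relationship and defers to \cite[Lem.~6.3]{bru}, adding (Remark \ref{Xu}(2)) that the combinatorial argument there is independent of the primality of $p$ and so carries over with $p$ replaced by $l$. Your proof is a correct reconstruction of that cited argument, adapted to the present ordering of odd roots; the decomposition into $n$ rounds, the observation that $(\wt(\mathfrak{m}^{(k-1)}),\beta_k)_s$ reduces to $\wt_i+\wt_{m+j}$, the bookkeeping identity $\wt_{m+j}=\sum_{i'>i}x_{i'}^{(j)}$ at the stage processing position $i$, and the resulting tautological match with the recursion \eqref{x_i} are exactly the right ingredients. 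Two small side-checks you omit but which do hold and are worth noting for completeness: (i) each $\nu^{(j)}$ is again a partition, because if $\nu^{(j-1)}_i=\nu^{(j-1)}_{i+1}$ and $x^{(j)}_{i+1}=0$ then \eqref{x_i} forces $x^{(j)}_i=0$; and (ii) since $\ell(\nu^{(j)})\leq\ell(\lambda)\leq r\leq m$, the recursion \eqref{x_i} is well-posed in each round. Your treatment of the zero tail is also correct: for $i>\ell(\nu^{(j-1)})$ one has $\wt_i=0$ and $\wt_{m+j}=\sum_{i'>i}x^{(j)}_{i'}=0$ (not by circularity but because the preceding stages of the round already produced $x^{(j)}_{i'}=0$), so Serganova's test yields a skip exactly as \eqref{x_i} demands. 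Since the paper gives only a citation here, your write-up supplies the details and is in the same spirit as the source it invokes.
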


When $m=n$, we may use this algorithm to compute the highest weight of a simple module twisted by the automorphism $\sigma_F$ on $U_{q,F}({n|n})$; see \eqref{sigR}. 

Recall that, for any $U_{q,F}({n|n})$-supermodule $V$, 
the  $U_{q,F}({n|n})$-supermodule $V^{\sigma}$ is defined by setting $V^\sigma=V$ as a vector space with a new action defined by
$$x\centerdot v=\sigma(x)v, \quad v\in V, \ x\in U_{q,F}({n|n}).$$
The map $V\mapsto V^\sigma$ defines a category isomorphism $U_{q,F}(n|n)$-{\sf mod}$\,\;\cong$ $U_{q,F}(n|n)$-{\sf mod}.
We now use $\tilde\lambda$ to determine the  highest weight
of  the irreducible $U_{q,F}({n|n})$-supermodule $L(\lambda)^{\sigma}$ (cf. \cite[Thm 4.5]{bru}).

\begin{theorem}\label{sigm}
For $\lambda\in \mathbb Z^{n|n}_\pp$, let $L(\la)$ be an irreducible
$U_{q,F}({n|n})$-supermodule with a highest weight vector $\fkm_\la$ and let $\widetilde\lambda=(\tilde\lambda^{(0)}|\tilde\lambda^{(1)})=\wt(\mathfrak{m}_\lambda^{(n^2)})$.
Then the $U_{q,F}({n|n})$-supermodule $L(\lambda)^{\sigma}$ is isomorphic to 
$L(\lambda^{\sigma}),$
where $\lambda^{\sigma}=(\tilde\lambda^{(1)}|\tilde\lambda^{(0)})$.
Furthermore, if we assume $r\leq m=n$, then, for any $\lambda\in\La_l^+(r)$, we have $$L({x(\lambda)})^{\sigma} \cong  L(x(\ttM(\lambda)),$$
where $\ttM $ is the Mullineux map \eqref{Mmap}.
\end{theorem}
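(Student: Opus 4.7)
Part 2 is an immediate consequence of Part 1 and Corollary \ref{6.3}: for $\lambda \in \Lambda_l^+(r)$ with $r \leq n$, Corollary \ref{6.3} gives $\widetilde{x(\lambda)} = y(\ttM(\lambda)) = (0^n \mid \ttM(\lambda), 0^{n-r})$, so
\[
L(x(\lambda))^{\sigma} \cong L\bigl(\widetilde{x(\lambda)}^{(1)} \mid \widetilde{x(\lambda)}^{(0)}\bigr) = L(\ttM(\lambda),0^{n-r} \mid 0^n) = L(x(\ttM(\lambda))).
\]
For Part 1, the plan is to exhibit an explicit highest weight vector of $L(\lambda)^\sigma$ inside $L(\lambda)$. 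Since $\sigma(E_i^{(M)}) = F_{2n-i}^{(M)}$, a highest weight vector of $L(\lambda)^\sigma$ is precisely a nonzero vector $w \in L(\lambda)$ annihilated by every $F_i^{(M)}$ ($i \in [1,2n-1]$, $M \geq 1$), that is, a \emph{full} lowest weight vector of $L(\lambda)$; its weight in the twisted module is then the coordinatewise reversal of its weight in $L(\lambda)$.

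The construction starts from Proposition \ref{lowe}: the vector $\fkm_\lambda^{(n^2)}$ is a highest weight vector of weight $\widetilde\lambda$ for the even subalgebra $U^{\mathrm{ev}} := U_{q,F}(\mathfrak{gl}_n) \otimes U_{q,F}(\mathfrak{gl}_n)$, and is annihilated by every negative odd root vector $E_{-\beta_i}$ ($1 \leq i \leq n^2$). Let $w_0^{\mathrm{ev}} = w_0^{(0)} w_0^{(1)}$ be the longest element of $\fS_n \times \fS_n$ and set $w := T_{w_0^{\mathrm{ev}}} \fkm_\lambda^{(n^2)}$, where $T_{w_0^{\mathrm{ev}}}$ is Lusztig's braid operator built from the even generators (which may equivalently be realised as a product of divided powers $F_i^{(m_i)}$, $i \neq n$, saturating the even simple roots in turn). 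By standard properties of Lusztig braid operators applied to an even highest weight vector, $w$ is nonzero, has weight $w_0^{\mathrm{ev}}(\widetilde\lambda) = (\mathrm{rev}\,\widetilde\lambda^{(0)} \mid \mathrm{rev}\,\widetilde\lambda^{(1)})$ in $L(\lambda)$, and satisfies $F_i^{(M)} w = 0$ for every $i \neq n$ and $M \geq 1$. The remaining vanishing $F_n^{(M)} w = 0$ is obtained from the conjugation identity $T_{w_0^{\mathrm{ev}}}^{-1} F_n T_{w_0^{\mathrm{ev}}} = \pm \up^c E_{-w_0^{\mathrm{ev}}(\alpha_n)}$ (some $c \in \mathbb Z$) together with $w_0^{\mathrm{ev}}(\alpha_n) = \epsilon_1 - \epsilon_{2n} = \beta_{n^2}$:
\[
F_n w = T_{w_0^{\mathrm{ev}}}\bigl(T_{w_0^{\mathrm{ev}}}^{-1} F_n T_{w_0^{\mathrm{ev}}}\bigr) \fkm_\lambda^{(n^2)} = \pm \up^c\, T_{w_0^{\mathrm{ev}}}\bigl(E_{-\beta_{n^2}} \fkm_\lambda^{(n^2)}\bigr) = 0
\]
by Proposition \ref{lowe}(3); vanishing of the higher divided powers follows by running the same computation over the integral form $\mathcal Z$ and then specialising via $\up \mapsto q$. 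Reversing the weight of $w$ produces $(\widetilde\lambda^{(1)} \mid \widetilde\lambda^{(0)}) = \lambda^\sigma$, and irreducibility of $L(\lambda)^\sigma$ forces $L(\lambda)^\sigma \cong L(\lambda^\sigma)$.

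The main obstacle is justifying the conjugation identity $T_{w_0^{\mathrm{ev}}}^{-1} F_n T_{w_0^{\mathrm{ev}}} = \pm \up^c E_{-\beta_{n^2}}$ in the super setting, since $T_{w_0^{\mathrm{ev}}}$ is built purely from even generators while $F_n$ is the odd simple generator. The cleanest route is to pick a reduced expression $w_0^{\mathrm{ev}} = s_{i_1}\cdots s_{i_N}$ (every $i_k \neq n$) and iterate the simple-reflection conjugations $T_{s_{i_k}}^{-1} E_{a,b} T_{s_{i_k}}$, each derived directly from the commutation formulas of Propositions \ref{ppco}--\ref{cqrv}; the odd root vector $F_n = E_{n+1,n}$ is successively transported through $E_{-s_{i_1}(\alpha_n)}, E_{-s_{i_2}s_{i_1}(\alpha_n)}, \ldots, E_{-\beta_{n^2}}$ with an accumulated sign and $\up$-power. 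As an alternative that bypasses braid operators altogether, one replaces $T_{w_0^{\mathrm{ev}}} \fkm_\lambda^{(n^2)}$ by an explicit monomial $\prod F_i^{(m_i)}$ ($i \neq n$) in even divided powers realising the reversal, and verifies $F_n^{(M)} w = 0$ by commuting $F_n^{(M)}$ through the monomial via the same formulas until it encounters a vanishing factor $E_{-\beta_i} \fkm_\lambda^{(n^2)} = 0$.
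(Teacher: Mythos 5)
Your deduction of the second statement from the first and Corollary \ref{6.3} is exactly the paper's argument, and the key reduction you set up for the first statement — a highest weight vector of $L(\lambda)^\sigma$ is the same as a \emph{full} lowest weight vector of $L(\lambda)$, whose twisted weight is the coordinatewise reversal of its weight in $L(\lambda)$ — is also precisely what the paper does. Where you diverge is in producing that lowest weight vector: your primary route uses the even Lusztig braid operator $T_{w_0^{\mathrm{ev}}}$ and the conjugation identity $T_{w_0^{\mathrm{ev}}}^{-1}F_n T_{w_0^{\mathrm{ev}}} = \pm\up^c E_{-\beta_{n^2}}$. This is a genuine gap, and you correctly flag it as one: the Lusztig braid group action is not developed in this paper (or the references it relies on) for the super hyperalgebra $U_{q,F}(n|n)$, and in particular there is no established formula for transporting the \emph{odd} simple root vector $F_n$ under the \emph{even} $T_{w_0^{\mathrm{ev}}}$; one cannot simply cite ``standard properties.'' Your second alternative — replacing $T_{w_0^{\mathrm{ev}}}\fkm_\lambda^{(n^2)}$ by an explicit monomial in even divided powers and commuting $F_n$ through it until it meets a vanishing $E_{-\beta_i}\fkm_\lambda^{(n^2)}=0$ — is precisely the paper's proof: the monomial is written out explicitly, Proposition \ref{lowe2} (the type-$A$ appendix result) does the work you attribute to ``standard braid operator properties'' (nonvanishing, correct weight, annihilation by $F_i^{(M)}$ for $i\neq n$), and the annihilation by $F_n$ is obtained by repeatedly applying the $\Upsilon$- and $\varpi$-twisted versions of Propositions \ref{ppco}(1)--(4) together with \eqref{odro}. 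Two minor remarks: the paper had to \emph{prove} Proposition \ref{lowe2} in the appendix exactly because it could not invoke braid-operator machinery, so this is not a free lemma; and your caveat about ``vanishing of the higher divided powers'' of $F_n$ is moot — $F_n^{(M)}=0$ for $M\geq2$ automatically since $\alpha_n$ is an odd root and $F_n^2=0$.
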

\begin{proof}
Since $L(\lambda)^{\sigma}$ is an irreducible supermodule,
it is enough to determine its highest weight.
From the definition of the isomorphism ${\sigma}$,   $v\in L(\lambda)^{\sigma}$
is a maximal vector if and only if    $v$  satisfies:
\begin{equation}\label{maxv}
0=E_{i}^{(M)}\centerdot  v=\sigma(E_{i}^{(M)})\, v=F_{2n-i}^{(M)}\, v ,  
    \quad M>0,\ 1\leq i\leq 2n-1.
\end{equation}
This is equivalent to say that $v$ is a lowest weight vector of $ L(\lambda).$

By Proposition \ref{lowe}, $\mathfrak{m}_\lambda^{(n^2)}$ is a maximal vector for even subhperalgebra $U_{q,F}(n|n)_{\bar0}\cong U_{q,F}(\mathfrak{gl}_n)\otimes U_{q^{-1},F}(\mathfrak{gl}_n)$.
Moreover,
\begin{equation}\label{odro}
\begin{aligned}
E_{-\beta_t}\, \mathfrak{m}_\lambda^{(n^2)}=0,   \ 1\leq t\leq n^2.
\end{aligned}
\end{equation}
Let 
\begin{equation}\notag
\begin{aligned}
\mathfrak{m}^{\sigma}_\lambda=&F_{1}^{(\tilde\lambda_{n-1}^{(0)}-\tilde\lambda_n^{(0)})}
(F_{2}^{(\tilde\lambda_{n-2}^{(0)}-\tilde\lambda_n^{(0)})}
F_{1}^{(\tilde\lambda_{n-2}^{(0)}-\tilde\lambda_{n-1}^{(0)})})
\cdots
(F_{n-1}^{(\tilde\lambda_1^{(0)}-\tilde\lambda_n^{(0)})}\cdots
F_{1}^{(\tilde\lambda_1^{(0)}-\tilde\lambda_2^{(0)})})\\
&\cdot
F_{n+1}^{(\tilde\lambda_{n-1}^{(1)}-\tilde\lambda_n^{(1)})}
(F_{n+2}^{(\tilde\lambda_{n-2}^{(1)}-\tilde\lambda_n^{(1)})}
F_{n+1}^{(\tilde\lambda_{n-2}^{(1)}-\tilde\lambda_{n-1}^{(1)})})
\cdots
(F_{2n-1}^{(\tilde\lambda_1^{(1)}-\tilde\lambda_n^{(1)})}\cdots
F_{n+1}^{(\tilde\lambda_1^{(1)}-\tilde\lambda_2^{(1)})})
\mathfrak{m}^{(n^2)}_\lambda.
\end{aligned}
\end{equation}
Then, by Proposition \ref{lowe2}, we have, for all $ 1\leq i\leq 2n-1$ and $i \neq n$, $F_{i}^{(M)}.\mathfrak{m}_\lambda^{\sigma}=0$.
To see $F_n.\mathfrak{m}^{\sigma}_\lambda=0$, observe the commutation formulas in Proposition \ref{ppco}. If $E_{c,d}$ is odd and $E_{a,b}$ is even, then all the RHS of formulas (1)--(4) are sums of terms starting with an odd root vector. (Only in (4), we need (3) to swap $E_{c,b}^{(t)}E_{c,d}^{(N-t)}$.)
Applying $\Upsilon$ produces half of the required formulas to prove $
F_{n}\, \mathfrak{m}_\lambda^{\sigma}=0.$ The other half can be obtained by applying $\varpi$ to (1)--(4) (see \eqref{fab}), assuming $E_{a,b}$ is odd and $E_{c,d}$ is even. (In (4), we see $E_{a,d}E_{c,b}=E_{c,b}E_{a,b}$ by (1).). Repeatedly applying the eight sets of formulas,we see that  $F_{n}\, \mathfrak{m}_\lambda^{\sigma}=0$ follows from \eqref{odro}.
Hence,  $ \mathfrak{m}_\lambda^{\sigma}$  is a lowest weight vector of  $L(\lambda)$ or a highest weight vector of $L(\lambda)^\sigma$. 

It remains to compute the weight $\wt_{L(\lambda)^\sigma}(\mathfrak{m}_\lambda^{\sigma})$. By Proposition \ref{lowe2}, 
$$
\wt(\mathfrak{m}_\lambda^{\sigma})
=(\tilde\lambda^{(0)}_{m}, \cdots,\tilde\lambda^{(0)}_{1} |\tilde\lambda^{(1)}_{n},\cdots,
                       \tilde\lambda^{(1)}_{1})=\mu
$$
in $L(\lambda).$
Since the isomorphism ${\sigma_F}$ sends $K_i^\pm$  to $K_{2n-i+1}^\mp$ and $K_i\brack t$ to $K_{2n+1-i}^{-1}\brack t$,
it follows from \eqref{wt space}, \eqref{wt space1} that $v\in L(\la)_\mu$ if and only if $v\in (L(\lambda)^{\sigma})_{\mu^\dagger}$. Hence,
$\wt_{L(\lambda)^\sigma}(\mathfrak{m}_\lambda^{\sigma})=\mu^\dagger=(\tilde\lambda^{(1)}|\tilde\lambda^{(0)})=\lambda^{\sigma}$ and, therefore, $\lambda^{\sigma}$
is the highest weight of $L(\lambda)^{\sigma}.$

Now, with  the hypothesis $r\leq m=n$, the last assertion follows from the first assertion and Corollary \ref{6.3}.
\end{proof}
\section{Matching Schur functors and the Mullineux conjecture}
Throughout this section, we assume $m,n\geq r$ and let
$$
\omega=(1^r,0^{m-r}|0^n),
\omega'=(0^m|0^{n-r},1^r)\in \Lambda(m|n,r).$$
We will identify $H_{q^2,F}(r)$ with $1_\omega S_q(m|n,r) 1_{\omega}$ under the isomorphism $t_i\mapsto T_i$, where $t_i$ is defined in the proof of Corollary \ref{qq-1}.

Consider two Schur functors $\fkf_{\omega}, \fkf_{{\omega'}}$ associated with the idempotents $1_\omega,1_{\omega'}$.
Thus, for every $\chi\in\{\omega,\omega'\}$,
$$\fkf_\chi: S_{q,F}(m|n,r)\text{-\sf mod}\longrightarrow 1_\chi S_{q,F}(m|n,r)1_\chi\text{-\sf mod},$$
satisfying $\fkf_\chi(V)=1_\chi V.$
We will make a comparison for the modules $\fkf_\omega(V)$, $\fkf_{\omega'}(V)$.


\begin{lemma} Assume $m\geq r$. If  $\lambda\in\La_l^+(r)$ then $\fkf_{\omega} L(x(\lambda))\neq0$. Hence, $\fkf_{\omega} L(x(\lambda))$ is an irreducible $H_{q^2,F}(r)$-module.
\end{lemma}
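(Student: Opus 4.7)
\emph{Proof plan.} I will first verify that $L(x(\lambda))$ is polynomial, so that $\fkf_\omega L(x(\lambda)) = 1_\omega L(x(\lambda))$ is defined and coincides with the weight space $L(x(\lambda))_\omega$. Since $x(\lambda)^{(1)} = 0$, we have $j_l(x(\lambda)^{(1)}) = 0 \leq x(\lambda)_m$, so $x(\lambda) \in \Lambda^\pp_l(m|n,r)$ by \eqref{++l}, and Theorem~\ref{cirr} applies. It therefore suffices to exhibit a nonzero vector of weight $\omega = (1^r, 0^{m-r}|0^n)$ in $L(x(\lambda))$.

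The central step is a reduction to the classical nonsuper setting. Fixing a highest weight vector $\fkm_{x(\lambda)}$, I will consider the submodule $M \subseteq L(x(\lambda))$ generated by $\fkm_{x(\lambda)}$ under the ``$\mathfrak{gl}_m$-part'' of the hyperalgebra, namely the subalgebra of $U_{q,F}(m|n)$ generated by $E_i^{(N)}, F_i^{(N)}, K_j^{\pm 1}, {K_j \brack t}$ for $1 \leq i < m$ and $1 \leq j \leq m$, which is isomorphic to $U_{q,F}(\mathfrak{gl}_m)$. Then $M$ is a highest weight $U_{q,F}(\mathfrak{gl}_m)$-module of highest weight $\lambda$, polynomial of degree $r$, with irreducible head equal to the classical irreducible $U_{q,F}(\mathfrak{gl}_m)$-module $L_m(\lambda)$. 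Since this subalgebra commutes with the $K_j^{\pm 1}$ for $m < j \leq m+n$, every weight vector of $M$ has its last $n$ $U_{q,F}(m|n)$-weight coordinates equal to $0$.

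To see that $(1^r, 0^{m-r})$ actually occurs as a weight of $M$, I will invoke the classical quantum Schur-Weyl duality of Dipper-James \cite{DJ86}: for $\lambda \in \La^+_l(r)$ with $\ell(\lambda) \leq m$, the $(1^r, 0^{m-r})$-weight space of $L_m(\lambda)$ is isomorphic to $D_\lambda$, which is nonzero. Because quotients of weight modules induce surjections on weight spaces, $M_{(1^r, 0^{m-r})} \twoheadrightarrow L_m(\lambda)_{(1^r, 0^{m-r})} \neq 0$, so $M_{(1^r, 0^{m-r})} \neq 0$. Combined with the vanishing of the last $n$ weight coordinates on $M$, this yields $0 \neq M_{(1^r, 0^{m-r})} \subseteq L(x(\lambda))_\omega$, proving $\fkf_\omega L(x(\lambda)) \neq 0$.

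For the final irreducibility assertion, I will apply the standard idempotent-truncation principle with $A = S_{q,F}(m|n,r)$, $e = 1_\omega$, $L = L(x(\lambda))$: for any idempotent $e$ in a finite-dimensional algebra $A$ and any irreducible $A$-module $L$, the truncation $eL$ is either zero or irreducible over $eAe$, since a nonzero $eAe$-submodule $N \subseteq eL$ gives $AN = L$ by irreducibility of $L$, whence $N = eAN = eL$. Corollary~\ref{qq-1} identifies $1_\omega S_{q,F}(m|n,r) 1_\omega \cong H_{q^2,F}(r)$. The main obstacle will be correctly pinpointing and citing the classical nonvanishing statement $L_m(\lambda)_{(1^r, 0^{m-r})} \neq 0$ for $l$-restricted $\lambda$ of length at most $m$; the remaining ingredients are the passage to the even subalgebra and the formal truncation argument.
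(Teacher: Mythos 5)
Your proof is correct and follows essentially the same strategy as the paper: reduce to the even subalgebra $U_{q,F}(m|n)_{\bar 0}\supseteq U_{q,F}(\mathfrak{gl}_m)$, observe that the $\omega$-weight space of the relevant classical irreducible is nonzero because $\lambda$ is $l$-restricted, and then apply the standard idempotent truncation fact together with Corollary~\ref{qq-1}. Your version is in fact slightly more careful than the paper's one-line proof, since you pass through the $U_{q,F}(\mathfrak{gl}_m)$-submodule $M$ generated by the highest weight vector and use surjectivity onto weight spaces of its irreducible head $L_m(\lambda)$, rather than asserting outright that $L(x(\lambda))$ \emph{contains} the even irreducible as a submodule.
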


\begin{proof} This is clear since $L(x(\lambda))$ contains the irreducible module $L(x(\lambda))_{\bar0}$ for the even quantum subsupergroup $U_{q,F}(n|n)_{\bar0}$
and $1_\omega L(x(\lambda))_{\bar0}\neq0$.
\end{proof}
This lemma guarantees that if we put   (cf. \cite[Thm 5.9, Rem. 5.10]{bru})
\begin{equation}\label{schf}
\begin{aligned}
D_\lambda:=\fkf_{\omega} L(x(\lambda)),
\end{aligned}
\end{equation}
then the set $\{D_\lambda\}_{\lambda\in\La_l^+(r)}$ forms a complete set of irreducible
$H_{q^2,F}(r)$-modules.

The following result follows from Corollary \ref{qq-1}.

\begin{lemma}\label{sche}
Assume $m, n\geq r$ and, for $1\leq i\leq r-1$, let $t_i=q1_\omega \tte_{i}\ttf_{i}1_\omega-1_\omega$ and $t_{m+n-r+i}=q1_{\omega'} \tte_{m+n-r+i}\ttf_{m+n-r+i}
1_{\omega'}-1_{\omega'}.$
Then the map
\begin{equation}\notag
\tau:  {H}_{q^2, F}(r)=1_\omega S_{q,F}(m|n,r)1_\omega  \longrightarrow 1_{\omega'} S_{q,F}(m|n,r)1_{\omega'},\; t_i               \longmapsto t_{m+n-r+i}
\end{equation}
defines an algebra  isomorphism
$1_\omega S_{q}(m|n,r)1_\omega\cong 1_{\omega'} S_{q}(m|n,r)1_{\omega'}$.
\end{lemma}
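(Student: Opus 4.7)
The strategy is to invoke Corollary \ref{qq-1} twice, once for each of the idempotents $1_\omega$ and $1_{\omega'}$, and then simply compose the two resulting isomorphisms with $H_{q^2,F}(r)$. Both $\omega=(1^r,0^{m-r}|0^n)$ and $\omega'=(0^m|0^{n-r},1^r)$ fall under the hypotheses of Corollary \ref{qq-1}: the former fits the pattern $(0^a,1^r,0^{m-a-r}|{\bf 0})$ with $a=0$, while the latter fits $({\bf 0}|0^a,1^r,0^{n-a-r})$ with $a=n-r$. Since $m,n\geq r$ by hypothesis, these are valid choices.

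Applying Corollary \ref{qq-1} to $\omega$, the elements $\ttC_i=1_\omega\tte_i\ttf_i1_\omega$ for the indices $i\in[1,r]$ with $\omega_i=1$ satisfy the quadratic and braid relations listed there, and consequently the elements $t_i=q\ttC_i-1_\omega$ (for $1\leq i\leq r-1$) satisfy the standard Hecke relations of $H_{q^2,F}(r)$; this yields an $F$-algebra isomorphism
\[
\phi_\omega: 1_\omega S_{q,F}(m|n,r)1_\omega\xrightarrow{\;\sim\;}H_{q^2,F}(r),\qquad t_i\longmapsto T_i\quad(1\leq i\leq r-1).
\]
Applying Corollary \ref{qq-1} analogously to $\omega'$, with the nonzero entries of $\omega'$ occupying positions $m+n-r+1,\ldots,m+n$, the elements $t_{m+n-r+i}=q\ttC_{m+n-r+i}-1_{\omega'}$ (for $1\leq i\leq r-1$) give rise to a second isomorphism
\[
\phi_{\omega'}: 1_{\omega'}S_{q,F}(m|n,r)1_{\omega'}\xrightarrow{\;\sim\;}H_{q^2,F}(r),\qquad t_{m+n-r+i}\longmapsto T_i\quad(1\leq i\leq r-1).
\]

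The composition $\tau:=\phi_{\omega'}^{-1}\circ\phi_\omega$ is then an $F$-algebra isomorphism
$1_\omega S_{q,F}(m|n,r)1_\omega\xrightarrow{\sim} 1_{\omega'} S_{q,F}(m|n,r)1_{\omega'}$ which by construction sends $t_i\mapsto t_{m+n-r+i}$ for $1\leq i\leq r-1$. This is precisely the map asserted in the lemma. There is no substantive obstacle here; the content of the argument is entirely supplied by Corollary \ref{qq-1}, and the only point requiring any care is the bookkeeping of the index shift, namely that the positions of the $r$ consecutive $1$'s in $\omega$ begin at $1$ while those in $\omega'$ begin at $m+n-r+1$, explaining the shift by $m+n-r$ in the definition of $\tau$.
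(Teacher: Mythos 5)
Your proof is correct and matches the paper's approach exactly: the paper gives no proof beyond the sentence ``The following result follows from Corollary~\ref{qq-1},'' and you have supplied the obvious fleshing-out, namely applying Corollary~\ref{qq-1} to each of $\omega$ and $\omega'$ (both of which fit the corollary's hypotheses) and composing the resulting isomorphisms with $H_{q^2,F}(r)$, with the index shift $i\mapsto m+n-r+i$ accounting for the positions of the $r$ consecutive $1$'s in $\omega'$.
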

Thus, we may twist an $1_{\omega'} S_{q,F}(m|n,r)1_{\omega'}$-module $V$ by $\tau$ to get an $H_{q,F}(r)$-module $V^\tau$.
We now establish the relationship between the two Schur functors.
\begin{proposition}\label{difh}
Assume $m,n\geq r$.
For any  $S_{q,F}(m|n,r)$-supermodule $V$, there is
an ${H}_{q^2, F}$-module isomorphism
  $$ (\fkf_{\omega} V)^\sharp\cong (\fkf_{{\omega'}}  V)^\tau. $$
\end{proposition}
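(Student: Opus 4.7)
The plan is to produce an explicit element $\theta\in 1_{\omega'}S_{q,F}(m|n,r)1_\omega$ whose left multiplication, $\xi_V:1_\omega V\to 1_{\omega'}V$, $v\mapsto\theta v$, realises the required $H_{q^2,F}$-module isomorphism. Writing $\ttC_i=1_\omega\tte_i\ttf_i1_\omega$ as in Corollary~\ref{qq-1}, and setting $\ttC'_j=1_{\omega'}\tte_j\ttf_j1_{\omega'}$ for the analogous element at weight $\omega'$, the identifications of Corollary~\ref{qq-1} and Lemma~\ref{sche} convert the required $H_{q^2,F}$-equivariance of $\xi_V$ into the single identity
\begin{equation*}
\ttC'_{m+n-r+i}\,\theta+\theta\,\ttC_i=(q+q^{-1})\,\theta,\qquad 1\leq i\leq r-1,
\end{equation*}
in $S_{q,F}(m|n,r)$. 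Since this is internal to $S_{q,F}$ and independent of $V$, establishing it once yields the asserted isomorphism for every $V$.

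For $\theta$ we take the product of $r$ odd negative root vectors that shifts the support of $\omega$ to that of $\omega'$:
\begin{equation*}
\theta=\tte_{m+n-r+1,1}\,\tte_{m+n-r+2,2}\cdots\tte_{m+n,r}\cdot 1_\omega,
\end{equation*}
which lies in $1_{\omega'}S_{q,F}1_\omega$ by repeated application of Lemma~\ref{reab}(6); it equals a nonzero scalar multiple of the basis element $\phi_{A_0}$ of Lemma~\ref{DR5.8} for the matrix $A_0=\sum_{i=1}^r E_{m+n-r+i,i}\in\sM(m|n,r)$, hence $\theta\neq 0$. For the bijectivity of $\xi_V$, set $\theta^\vee=\tte_{r,m+n}\cdots\tte_{1,m+n-r+1}\cdot 1_{\omega'}\in 1_\omega S_{q,F}1_{\omega'}$ and apply the odd anti-commutator (the $M=N=1$ case of Proposition~\ref{cqrv}(4)) together with $\tte_\alpha^2=0$ for odd $\alpha$ and Lemma~\ref{reab}(6). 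An induction on $r$ that collapses each innermost matched pair $\tte_{k,m+n-r+k}\tte_{m+n-r+k,k}$ in $\theta^\vee\theta$ yields $\theta^\vee\theta=c_r\cdot 1_\omega$ and $\theta\theta^\vee=c_r\cdot 1_{\omega'}$ for some nonzero $c_r\in F^\times$; the base case $r=1$ is a direct application of the anti-commutator, noting $\tte_{1,m+n}\cdot 1_\omega=0$ by Lemma~\ref{reab}(6) and that $\ttk_{1,m+n}$ acts as $q$ on $1_\omega$.

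The heart of the proof is the verification of the intertwining identity. Since $i\leq r-1<m$, the even simple root vector $\tte_i=E_{i,i+1}$ super-commutes with every factor of $\theta$ except $\tte_{m+n-r+i,i}$; the non-trivial commutator is given by Proposition~\ref{cqrv}(3) with $M=N=1$:
\begin{equation*}
\tte_i\,\tte_{m+n-r+i,i}=\tte_{m+n-r+i,i}\,\tte_i-\ttk_{i,i+1}\,\tte_{m+n-r+i,i+1}.
\end{equation*}
A dual formula, obtained via the anti-automorphism $\Upsilon$ of \eqref{Up}, handles the $\ttf_i$-commutation. Both introduce ``new'' odd root vectors $\tte_{m+n-r+i,i+1}$ which, upon combining with the factors of $\ttC'_{m+n-r+i}=1_{\omega'}\tte_{m+n-r+i}\ttf_{m+n-r+i}1_{\omega'}$ through the remaining cases of Propositions~\ref{ppco} and \ref{cqrv}, cancel against correction terms via $\tte_\alpha^2=0$ and the $\ttk$-eigenvalue formulas of Lemma~\ref{reab}(2), leaving exactly $(q+q^{-1})\theta$. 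The main obstacle is the bookkeeping of super-signs $(-1)^{\bar E_\alpha\bar E_\beta}$ arising in these commutations: the argument succeeds precisely because these signs reproduce the single minus sign of the $\sharp$-twist $T_i^\sharp=(q^2-1)-T_i$. This sign-matching is the super-theoretic crux of the statement and is the quantum counterpart of the analogous matching in the classical proof of \cite[Thm.~5.9]{bru}.
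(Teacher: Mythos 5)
Your proposal is correct and takes essentially the same route as the paper: the intertwiner $\theta=\sfF 1_\omega$ with $\sfF=\tte_{m+n-r+1,1}\cdots\tte_{m+n,r}$ is exactly the map $g$ used there, the bijectivity argument via $\theta^\vee\theta$ collapsing innermost odd pairs matches the paper's computation showing $\tte_{r,m+n}\cdots\tte_{1,m+n-r+1}\sfF 1_\omega=1_\omega$, and your intertwining identity $\ttC'_{m+n-r+i}\theta+\theta\,\ttC_i=(q+q^{-1})\theta$ is equivalent (after substituting $t_i=q\ttC_i-1_\omega$) to the paper's equation $\sfF(t_i^\sharp 1_\omega)=t_{m+n-r+i}\sfF 1_\omega$. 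The paper carries out the commutator computation in full, which your sketch summarizes correctly, including the role of the odd anti-commutator sign in producing the cancellation.
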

\begin{proof}Recall the generators in \eqref{Sgenerators} and let $\tte_{a,b}=\eta_{r,F}(E_{a,b})$ (see \eqref{etaR}).
Let $$\sfF=\tte_{m+n-r+1,1}\tte_{m+n-r+2,2}\cdots \tte_{m+n,r}.$$ 
Then, by Lemma \ref{reab}(6), $\sfF 1_\omega=1_{\omega'} \sfF.$
We first claim that the map
\begin{equation}\notag
g: 1_\omega V  \longrightarrow1_{\omega'} V,\quad 1_\omega v \longmapsto     \sfF 1_\omega v
\;\;(\forall v\in V)\end{equation} 
is a linear isomorphism. Indeed, applying Proposition \ref{cqrv}(4) yields
\begin{equation}\notag
\begin{aligned}
&\quad\,\tte_{r,m+n}\cdots \tte_{2,m+n-r+2}\tte_{1,m+n-r+1}( \sfF1_\omega)\\
&=  \tte_{r,m+n}\cdots \tte_{2,m+n-r+2}(\tte_{1,m+n-r+1} \tte_{m+n-r+1,1})
                      \tte_{m+n-r+2,2}\cdots \tte_{m+n,r}1_\omega\\
&=  \tte_{r,m+n}\cdots \tte_{2,m+n-r+2}{\ttk_{1,m+n-r+1}\brack 1}\tte_{m+n-r+2,2}\cdots \tte_{m+n,r}1_\omega\\
   &\quad\, - \tte_{r,m+n}\cdots \tte_{2,m+n-r+2}(\tte_{m+n-r+1,1}\tte_{1,m+n-r+1})\tte_{m+n-r+2,2}\cdots \tte_{m+n,r}1_\omega\\
&  \overset{(*)}=  \tte_{r,m+n}\cdots \tte_{2,m+n-r+2}\tte_{m+n-r+2,2}\cdots \tte_{m+n,r}1_\omega\\
       &  =\cdots= \tte_{r,m+n} \tte_{m+n,r}1_\omega= 1_\omega.
\end{aligned}
\end{equation}
Here the equality $(*)$ is seen from Lemma \ref{reab}(6) and \eqref{wt space1}, 
since $$\tte_{m+n-r+2,2}\cdots \tte_{m+n,r}1_\omega=1_\lambda \tte_{m+n-r+2,2}\cdots \tte_{m+n,r},$$ where
$\lambda={\omega+\alpha_{m+n-r+2,2}+\cdots+\alpha_{m+n,r}}=(1,0^{m-1}|0^{n-r+1},1^{r-1})$.
Thus, we have $ \sfF1_\omega v\neq 0\iff 1_\omega v\neq 0.$ Hence, $g$ is  injective and so
$\dim  1_\omega V \leq \dim  1_{\omega'} V.$
Similarly, we may use $\sfF'=\tte_{1,m+n-r+1}\cdots\tte_{r-1,m+n-1}\tte_{r,m+n}$ to prove $\dim  1_{\omega'} V \leq \dim  1_{\omega} V$. Hence, $g $ is a bijection.

We now show that the map $g$  is an $H_{q^2,F}$-module isomorphism. This amounts to prove that, for any $v\in  1_\omega V$ and $1\leq i<r$,
\begin{equation}\label{pair1}
g((-t_i+(q^2-1)1_\omega) v)=g(t_i^\sharp v)=\tau(t_i)g( v)=t_{m+n-r+i}g( v).
\end{equation}
We prove \eqref{pair1} by showing that in $S_{q,F}(m|n,r)$
\begin{equation}\label{pair2}
-\sfF t_i1_{\omega}+(q^2-1)\sfF 1_{\omega}= \sfF (t_i^\sharp 1_{\omega})=t_{m+n-r+i} \sfF 1_\omega.
\end{equation}

Let  $n''=m+n-r,$
$\omega''={\omega-\alpha_{r,n''+r}-\cdots-\alpha_{i+2,n''+i+2}}=(1^{i+1},0^{m-i-1}|0^a,1^{r-i-1})$.
Then, for $1\leq i\leq r$,
\begin{equation}\notag
\begin{aligned}
&\quad\;\sfF \cdot 1_\omega \tte_{i}\ttf_{i}1_\omega \\
&=\tte_{m+n-r+1,1}\tte_{m+n-r+2,2}\cdots \tte_{m+n,r} 1_\omega \tte_{i,i+1}\tte_{i+1,i}1_\omega  \\
&= \tte_{n''+1,1}\cdots \tte_{n''+i-1,i-1}\cdot  \tte_{n''+i,i}\tte_{n''+i+1,i+1} \tte_{i,i+1}\tte_{i+1,i}
                       1_{\omega''}\cdot\tte_{n''+i+2,i+2}\cdots
      \tte_{n''+r,r}  1_\omega.
\end{aligned}
\end{equation}
Let (a) stand for Propositions \ref{cqrv}(1); (b) for Propositions \ref{cqrv}(3); (c) for Lemma \ref{reab}(6);
(d) for Lemma \ref{reab}(2). Let (e) be the formula obtained by applying $\Upsilon$ in \eqref{Up} to Propositions \ref{ppco}(3). The middle part of the product above becomes
\begin{equation}\notag
\begin{aligned}
&\quad\;\, \tte_{n''+i,i}(\tte_{n''+i+1,i+1} \tte_{i,i+1})\tte_{i+1,i} 1_{\omega''}\\
&\overset{(a)}=  \tte_{n''+i,i} (\tte_{i,i+1} \tte_{n''+i+1,i+1}) \tte_{i+1,i} 1_{\omega''}
           = (\tte_{n''+i,i} \tte_{i,i+1}) \tte_{n''+i+1,i+1} \tte_{i+1,i} 1_{\omega''}\\
&\overset{(b)}=  (\tte_{i,i+1} \tte_{n''+i,i}+\ttk_{i,i+1} \tte_{n''+i,i+1})  \tte_{n''+i+1,i+1} \tte_{i+1,i} 1_{\omega''} \\
&\overset{(c)}=  \ttk_{i,i+1} \tte_{n''+i,i+1}  \tte_{n''+i+1,i+1} \tte_{i+1,i} 1_{\omega''}
\quad (\text{as }  \tte_{n''+i,i}  \tte_{n''+i+1,i+1} \tte_{i+1,i} 1_{\omega''}=0)\\
&\overset{(d)}=   \tte_{n''+i,i+1}  \tte_{n''+i+1,i+1} \tte_{i+1,i} 1_{\omega''}
          = \tte_{n''+i,i+1}  (\tte_{n''+i+1,i+1} \tte_{i+1,i}) 1_{\omega''}\\
&\overset{(e)}=   \tte_{n''+i,i+1}  (\tte_{n''+i+1,i}
   +q \tte_{i+1,i} \tte_{n''+i+1,i+1}) 1_{\omega''}\\
&\overset{(e)}=   \tte_{n''+i,i+1}  \tte_{n''+i+1,i}  1_{\omega''}
   +q\tte_{n''+i,i}   \tte_{n''+i+1,i+1} 1_{\omega''}+q^2\tte_{i+1,i} \tte_{n''+i,i+1}   \tte_{n''+i+1,i+1} 1_{\omega''} \\
&\overset{(c)}=  \tte_{n''+i,i+1}  \tte_{n''+i+1,i}  1_{\omega''}
   +q\tte_{n''+i,i}   \tte_{n''+i+1,i+1} 1_{\omega''}.
\end{aligned}
\end{equation}
Thus, we have
\begin{equation}\notag
\begin{aligned}
&\quad\;\sfF \cdot1_\omega \tte_{i}\ttf_{i}1_\omega\\
&= \tte_{n''+1,1}\cdots \tte_{n''+i-1,i-1}  \tte_{n''+i,i+1}  \tte_{n''+i+1,i} 1_{\omega''}\tte_{n''+i+2,i+2}     \cdots   \tte_{n''+r,r}  1_\omega\\
      &\quad\; +q \tte_{n''+1,1}\cdots  \tte_{n''+i-1,i-1}   \tte_{n''+i,i}   \tte_{n''+i+1,i+1} 1_{\omega''}\tte_{n''+i+2,i+2}     \cdots   \tte_{n''+r,r}  1_\omega.
\end{aligned}
\end{equation}
Hence,
\begin{equation}\notag
\begin{aligned}
\sfF t_i 1_{\omega} &=\sfF (q1_\omega \tte_{i,i+1}\tte_{i+1,i}1_\omega-1_\omega)\\
&= q\tte_{n''+1,1}\cdots   \tte_{n''+i,i+1}  \tte_{n''+i+1,i}      \cdots   \tte_{n''+r,r}  1_\omega\\
      &\quad\; +(q^2-1) \tte_{n''+1,1}\cdots   \tte_{n''+i,i}   \tte_{n''+i+1,i+1}      \cdots   \tte_{n''+r,r}  1_\omega.\\
 \end{aligned}
\end{equation}    
\begin{equation}
\begin{aligned} 
\sfF t_i^\sharp 1_{\omega} &=\sfF (-t_i 1_{\omega}+(q^2-1) 1_{\omega} )\\
&= -q\tte_{n''+1,1}\cdots \tte_{n''+i-1,i-1} ( \tte_{n''+i,i+1}  \tte_{n''+i+1,i})    \tte_{n''+i+2,i+2}  \cdots   \tte_{n''+r,r}  1_\omega.\label{*}\\
\end{aligned}
\end{equation}

Similarly,
\begin{equation}\notag
\begin{aligned}
&\quad\;1_{\omega'} \tte_{n''+i}\ttf_{n''+i}1_{\omega' }(\sfF 1_{\omega} )\\
&=1_{\omega'} \tte_{n''+i,n''+i+1}\tte_{n''+i+1,n''+i}1_{\omega' }
            \tte_{n''+1,1}\tte_{n''+2,2}\cdots \tte_{n''+r,r} 1_\omega   \\
&= \tte_{n''+1,1}\cdots  \tte_{n''+i-1,i-1}\cdot \tte_{n''+i,n+i+1}\tte_{n''+i+1,n''+i} \tte_{n''+i,i}\tte_{n''+i+1,i+1}
                       1_{\omega''}\\
                       &\quad\; \cdot\tte_{n''+i+2,i+2}\cdots
      \tte_{n''+r,r}  1_\omega,
\end{aligned}
\end{equation}
Let (u) be the formula obtained by applying $\Upsilon$ to Proposition \ref{cqrv}(2) twice; (v) for Lemma \ref{KE};
and (w) for the $\Upsilon$-version of Proposition \ref{ppco}(1). Then the middle part of the product above becomes
\begin{equation}\notag
\begin{aligned}
&\quad\; \,\tte_{n''+i,n''+i+1}(\tte_{n''+i+1,n''+i} \tte_{n''+i,i})\tte_{n''+i+1,i+1}  1_{\omega''}\\
&\overset{(e)}=  \tte_{n''+i,n''+i+1}(\tte_{n''+i+1,i}
     +q^{-1}\tte_{n''+i,i} \tte_{n''+i+1,n''+i})  \tte_{n''+i+1,i+1}  1_{\omega''}\\
&\overset{(c)}=  (\tte_{n''+i,n''+i+1}\tte_{n''+i+1,i}) \tte_{n''+i+1,i+1}  1_{\omega''}\\
&\overset{(u)}= ( \tte_{n''+i+1,i}\tte_{n''+i,n''+i+1} 
+   \tte_{n''+i,i}\ttk_{n''+i,n''+i+1}^{-1}) \tte_{n''+i+1,i+1}  1_{\omega''}\\
&\overset{(v)}= \tte_{n''+i+1,i}(\tte_{n''+i,n''+i+1} \tte_{n''+i+1,i+1} ) 1_{\omega''}
+   q^{-1}\tte_{n''+i,i}\tte_{n''+i+1,i+1} \ttk_{n''+i,n''+i+1}^{-1} 1_{\omega''}\\
&\overset{(u)}=  \tte_{n''+i+1,i} (\tte_{n''+i+1,i+1} \tte_{n''+i,n''+i+1}
+\tte_{n''+i,i+1} \ttk_{n''+i,n''+i+1}^{-1} ) 1_{\omega''}\\
&\overset{\;\;(d)}{\text{\ }}+  q^{-1} \tte_{n''+i,i} \tte_{n''+i+1,i+1}  1_{\omega''}\\
&\overset{(c,d)}=  \tte_{n''+i+1,i}\tte_{n''+i,i+1}  1_{\omega''}
+  q^{-1} \tte_{n''+i,i} \tte_{n''+i+1,i+1}  1_{\omega''}\\
&\overset{(w)}=  -\tte_{n''+i,i+1}\tte_{n''+i+1,i}  1_{\omega''}
+  q^{-1} \tte_{n''+i,i} \tte_{n''+i+1,i+1}  1_{\omega''}.\\
\end{aligned}
\end{equation}
Thus,
\begin{equation}\notag
\begin{aligned}&\quad\;1_{\omega'} \tte_{n''+i}\ttf_{n''+i}1_{\omega' }(\sfF 1_{\omega} )\\
&= -\tte_{n''+1,1}\cdots  \tte_{n''+i-1,i-1}\cdot \tte_{n''+i,i+1}  \tte_{n''+i+1,i}   \cdot\tte_{n''+i+2,i+2}    \cdots   \tte_{n''+r,r}  1_\omega+q^{-1} \sfF  1_\omega.
\end{aligned}
\end{equation}
Hence, by \eqref{*},
\begin{equation}\notag
\begin{aligned}
t_{n''+i}\sfF 1_{\omega} 
&=(q1_{\omega'} \tte_{n''+i}\ttf_{n''+i}1_{\omega' }-1_{\omega'} )(\sfF 1_{\omega} )\\
&= -q\tte_{n''+1,1}\cdots \tte_{n''+i-1,i-1}  (\tte_{n''+i,i+1}  \tte_{n''+i+1,i}) \tte_{n''+i+2,i+2}      \cdots   \tte_{n''+r,r}  1_\omega\\
&=\sfF (t_i^\sharp 1_{\omega} ),
\end{aligned}
\end{equation}
proving \eqref{pair2}, and hence, \eqref{pair1}.
\end{proof}
When $m=n\geq r$, the automorphism ${\sigma_F}$ on $S_{q,F}(n|n,r)$ (see Lemma \ref{sigS}) takes $1_\omega$ to $1_{\omega'}$. So restriction induces an algebra isomorphism (see \eqref{Bsig})
$$\bar\sigma: H_{q^2,F}(r)=1_\omega S_{q,F}(n|n,r)1_\omega\longrightarrow 1_{\omega'} S_{q,F}(n|n,r)1_{\omega'},\quad t_i\longmapsto t_{2n-i},$$
for $1\leq i\leq r-1$. Thus, twisting module actions defines a functor
$$\bar\sigma:1_{\omega'}S_{q,F}(n|n,r)1_{\omega'}{\text{-\sf mod}}\longrightarrow H_{q^2,F}(r){\text{-\sf mod}}.$$
Note that, if $V$ is an $S_{q}(n|n,r)$-supermodule, then $ ( \fkf_{{\omega'}}  V)^{\bar\sigma} $ is an
$H_{q^2,F}(r)$-module via the action $T_i \cdot x=\bar\sigma(t_{i})x$ for all $x\in \fkf_{\omega'}V$. Likewise,  $\fkf_{\omega}(V^{\sigma})$ is an $H_{q^2,F}(r)$-module via the action $T_i \cdot y=\sigma(t_i)y$ for all $y\in \fkf_{\omega} (V^{\sigma})$.
\begin{lemma}\label{schs}
With the notation above,
the following diagram 
$$\begin{CD}
S_{q,F}(n|n,r){\text{-\sf mod}} @>\fkf_\omega>>  H_{q^2,F}{\text{-\sf mod}}  \\ 
@A\sigma AA  @AA\bar\sigma A  \\
S_{q,F}(n|n,r){\text{-\sf mod}} @>\fkf_{\omega'}>> 1_{\omega'}S_{q,F}(n|n,r)1_{\omega'}{\text{-\sf mod}}
\end{CD}$$ is commutative.
In other words, $ ( \fkf_{{\omega'}}  V)^{\bar\sigma} =\fkf_{\omega} (V^{\sigma})$
for any  $S_{q,F}$-supermodule $V$.
\end{lemma}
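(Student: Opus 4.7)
The plan is to verify the asserted equality by unwinding the two composite functors and observing that they produce the same $H_{q^2,F}(r)$-action on a common underlying vector space.

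First, I would use the formula \eqref{sigR} for $\sigma_F$ to note that $\sigma$ swaps the weights $\omega=(1^r,0^{n-r}|0^n)$ and $\omega'=(0^n|0^{n-r},1^r)$, so $\sigma(1_\omega)=1_{\omega'}$ (an identity already exploited in the proof of Lemma \ref{sigS}). Consequently, for any $v\in V$, the twisted action gives $1_\omega\cdot_\sigma v=\sigma(1_\omega)v=1_{\omega'}v$, so the underlying vector space of $\fkf_\omega(V^\sigma)=1_\omega V^\sigma$ coincides as a subset of $V$ with that of $\fkf_{\omega'}(V)^{\bar\sigma}=1_{\omega'}V$. This disposes of the equality of underlying spaces.

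Next, I would compare the two $H_{q^2,F}(r)$-actions on $1_{\omega'}V$. Via the identification $H_{q^2,F}(r)=1_\omega S_{q,F}(n|n,r)1_\omega$, the generator $T_i$ corresponds to $t_i$. On $\fkf_\omega(V^\sigma)$ it acts by $T_i\cdot w=t_i\cdot_\sigma w=\sigma(t_i)w$, while on $\fkf_{\omega'}(V)^{\bar\sigma}$ it acts by $T_i\cdot w=\bar\sigma(t_i)w=t_{2n-i}w$ by Lemma \ref{sigS}. The crucial point, also contained in Lemma \ref{sigS}, is that $\bar\sigma$ is by definition the restriction of $\sigma$ to the idempotent-truncated subalgebra $1_\omega S_{q,F}(n|n,r)1_\omega$; hence $\sigma(t_i)=\bar\sigma(t_i)=t_{2n-i}$, and the two actions agree.

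No serious obstacle is anticipated: the lemma is essentially a naturality/restriction statement whose content has already been packaged into Lemma \ref{sigS} (that $\sigma$ restricts to $\bar\sigma$ on the idempotent-truncated subalgebra, and that $\sigma$ swaps the weight idempotents $1_\omega$ and $1_{\omega'}$). Once those two facts are in hand, the equality of functors follows by the straightforward unwinding sketched above.
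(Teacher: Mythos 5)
Your proof is correct and follows the same route as the paper: use $\sigma(1_\omega)=1_{\omega'}$ (from Lemma \ref{sigS}) to identify the underlying spaces $1_\omega V^\sigma = 1_{\omega'}V$, and then observe that the two $H_{q^2,F}(r)$-actions, $T_i\cdot w=\sigma(t_i)w$ and $T_i\cdot w=\bar\sigma(t_i)w$, agree because $\bar\sigma$ is precisely the restriction of $\sigma$ to $1_\omega S_{q,F}(n|n,r)1_\omega$. The paper compresses this second step into "it is easy to see from the above that the $H_{q^2,F}$-module structures on both sides are the same," which you have usefully made explicit.
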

\begin{proof}
Since $\sigma(1_{\omega'}) =1_{\omega}$, we have $ {1_{\omega'}}  V = {1_\omega}(V^{\sigma})$ or $\fkf_{{\omega'}}  (V)= \fkf_{\omega} (V^{\sigma})$
as vectors spaces. Now it is easy to see from the above that the $H_{q^2,F}$-module structures on both side are the same.
\end{proof}

We are now ready to proof the quantum version of the Mullineux conjecture.
\begin{theorem}\label{QMC}
For any $\lambda\in\La_l^+(r) $, the irreducible ${H}_{q^2,F}(r)$-modules
$D_\lambda^\sharp$ and $ D_{\ttM(\lambda)}$ are isomorphic: $D_\lambda^\sharp\cong D_{\ttM(\lambda)}$.
\end{theorem}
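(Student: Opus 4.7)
The plan is to chain together the twists $(\ )^\sharp$, $\tau$, $\bar\sigma$, and $\sigma$ using the machinery of the preceding two sections. Since the irreducible $H_{q^2,F}(r)$-modules $D_\lambda$ are intrinsic to the Hecke algebra, I may freely assume $m=n\geq r$ so that Proposition \ref{difh}, Lemma \ref{schs}, and the last assertion of Theorem \ref{sigm} are all at my disposal. Starting from $D_\lambda = \fkf_\omega L(x(\lambda))$, I will first apply Proposition \ref{difh} with $V=L(x(\lambda))$ to obtain the $H_{q^2,F}(r)$-module isomorphism
\[
D_\lambda^\sharp = \bigl(\fkf_\omega L(x(\lambda))\bigr)^\sharp \cong \bigl(\fkf_{\omega'} L(x(\lambda))\bigr)^\tau.
\]

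Next I will compare the two algebra isomorphisms $\tau,\bar\sigma: H_{q^2,F}(r) \to 1_{\omega'} S_{q,F}(n|n,r) 1_{\omega'}$. On generators, $\tau(t_i)=t_{2n-r+i}$ by Lemma \ref{sche}, while $\bar\sigma(t_i)=t_{2n-i}$ by \eqref{Bsig}, both for $1\leq i\leq r-1$. A direct check shows $\bar\sigma^{-1}\circ\tau$ is precisely the graph automorphism $(\ )^\dagger$ of $H_{q^2,F}(r)$ from \eqref{dagger}, sending $t_i\mapsto t_{r-i}$. Consequently, for any $1_{\omega'}S_{q,F}(n|n,r)1_{\omega'}$-module $W$, there is an equality $W^\tau = (W^{\bar\sigma})^\dagger$ of $H_{q^2,F}(r)$-modules. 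Applying this with $W = \fkf_{\omega'} L(x(\lambda))$, and then invoking Lemma \ref{schs} followed by Theorem \ref{sigm}, yields
\[
D_\lambda^\sharp \cong \bigl(\fkf_{\omega'} L(x(\lambda))\bigr)^\tau = \bigl(\fkf_\omega\bigl(L(x(\lambda))^{\sigma}\bigr)\bigr)^\dagger \cong \bigl(\fkf_\omega L(x(\ttM(\lambda)))\bigr)^\dagger = D_{\ttM(\lambda)}^\dagger.
\]

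The final step will be to eliminate the residual $\dagger$-twist by observing that $(\ )^\dagger$ is an inner automorphism of $H_{q^2,F}(r)$. Since $w_0 s_i w_0 = s_{r-i}$ in $\fS_r$, the standard length identities $T_{w_0}T_i^{-1}=T_{w_0 s_i}=T_{s_{r-i}w_0}=T_{r-i}^{-1}T_{w_0}$ give $T_{w_0} T_i T_{w_0}^{-1} = T_{r-i}$, so conjugation by the invertible element $T_{w_0}$ realises $(\ )^\dagger$. Hence every $H_{q^2,F}(r)$-module is isomorphic to its $\dagger$-twist, and in particular $D_{\ttM(\lambda)}^\dagger \cong D_{\ttM(\lambda)}$, completing the proof.

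The heavy lifting has already been done in the previous sections, so what remains is essentially bookkeeping of how the four twists interact. The step I expect to require the most care is the comparison between $\tau$ and $\bar\sigma$ and the recognition that they differ precisely by the inner automorphism $(\ )^\dagger$ of the Hecke algebra; after that, the chain of isomorphisms is forced by the structural results already proved.
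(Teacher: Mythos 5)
Your proof is correct and follows essentially the same route as the paper: chain Proposition \ref{difh}, the identification of $\tau^{-1}\bar\sigma$ (equivalently $\bar\sigma^{-1}\tau$) with the graph automorphism $(\ )^\dagger$, Lemma \ref{schs}, and Theorem \ref{sigm}, and then discard the residual $\dagger$-twist. The one genuine value-add is that you make explicit why the $\dagger$-twist can be discarded, by exhibiting $(\ )^\dagger$ as the inner automorphism $T_{w_0}(\ )T_{w_0}^{-1}$ of $H_{q^2,F}(r)$, whereas the paper asserts $(D_\lambda)^{\tau^{-1}\bar\sigma}\cong D_\lambda$ without detailing this point.
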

\begin{proof}By definition,
$D_\lambda=\fkf_{\omega}L(x(\lambda)).$
 Then by Proposition \ref{difh}, 
$$(\fkf_{{\omega'}} L(x(\lambda)))^\tau\cong(\fkf_{\omega}L(x(\lambda)))^\sharp
=D_\lambda^\sharp.$$
By Lemma \ref{schs} and Theorem \ref{sigm} then
$$
(\fkf_{{\omega'}} L(x(\lambda)))^{\bar\sigma} \cong \fkf_{{\omega}} (L(x(\lambda))^{\sigma})
\cong \fkf_{{\omega}} L(x(\ttM(\lambda)))=D_{\ttM(\lambda)}.
$$
Since $\tau^{-1}\bar\sigma(t_i)=t_{r-i}$ is the automorphism induced by the graph automorphism
for the Hecke algebra ${H}_{q^2,F}(r)$, we have $(D_\lambda)^{\tau^{-1}\bar\sigma}  \cong  D_\lambda.$
Therefore, 
$$D_\lambda^\sharp\cong
(\fkf_{{\omega'}} L(x(\lambda)))^{\tau} \cong  
((\fkf_{{\omega'}} L(x(\lambda)))^{\tau})^{\tau^{-1}\bar\sigma} \cong  
(\fkf_{{\omega'}} L(x(\lambda)))^{\bar\sigma}\cong  D_{\ttM(\lambda)},
$$
as desired.
\end{proof}

%

\begin{appendix}
\section{The lowest weight of an irreducible $U_{q,F}(m|0)$-module}

Let $U_{q,F}({m})=U_{q,F}({m|0})$ be the quantum hyperalgebra of $\mathfrak{gl}_{m}$ and, for $\lambda\in \mathbb Z^m_+:=\mathbb Z^{m|0}_\pp$, let $L(\la)$ be the associated irreducible $U_{q,F}({m})$-module.

The following result should be the special case of a general result. For example, by the symmetries acting (or braid group actions) on the Weyl module $V(\la)$ (\cite[Ch. 5]{Lubk}), \cite[Lem. 39.1.2]{Lubk} tells exactly the result. However, for our purpose, one needs to extend these actions to the quantum hyperalgebra $U_{q,F}(m)$ and modules at roots of unity, and establish a result for $L(\lambda)$ similar to \cite[Lem. 39.1.2]{Lubk}. For completeness, we provide below a direct proof for the type $A$ case.

\begin{proposition}\label{lowe2} For $\lambda=(\la_1,\cdots,\lambda_m)\in \mathbb Z^m_+$,
if $0\neq\mathfrak{m}_\lambda\in L(\lambda)_\la$ is a highest weight vector, then 
$$F_{1}^{(\lambda_{m-1}-\lambda_{m})}(F_2^{(\la_{m-2}-\la_m)}F_1^{(\la_{m-2}-\la_{m-1})})
\cdots(F_{m-1}^{(\lambda_{1}-\lambda_m)}\cdots
F_{2}^{(\lambda_{1}-\lambda_{3})}
F_{1}^{(\lambda_{1}-\lambda_{2})})
\mathfrak{m}_\lambda\neq0$$ is a lowest weight vector of $L(\lambda)$ with weight $\lambda^\dagger=(\lambda_m,\la_{m-1},\cdots,\lambda_1)$.
\end{proposition}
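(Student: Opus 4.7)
\emph{Weight.}  I would organize the monomial into blocks $B_k := F_k^{(\lambda_{m-k}-\lambda_m)}F_{k-1}^{(\lambda_{m-k}-\lambda_{m-1})}\cdots F_1^{(\lambda_{m-k}-\lambda_{m-k+1})}$ for $1\le k\le m-1$, so the expression reads $B_1B_2\cdots B_{m-1}\mathfrak{m}_\lambda$ (operators applied right-to-left). Using that each $F_i^{(a)}$ shifts the weight by $-a(\epsilon_i-\epsilon_{i+1})$, a direct induction on the number of blocks would show that $B_{m-k}B_{m-k+1}\cdots B_{m-1}\mathfrak{m}_\lambda$ carries weight $(\lambda_{k+1},\lambda_{k+2},\ldots,\lambda_m,\lambda_k,\ldots,\lambda_1)$; taking $k=m-1$ yields $\lambda^\dagger$. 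Once nonvanishing is established, the statement that the vector is a lowest weight vector is automatic, since $\lambda^\dagger$ is the lowest weight of $L(\lambda)$ and no $F_i$ can lower it further.

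\emph{Nonvanishing.}  My plan is to identify the formula, up to a unit of $\sZ=\mathbb Z[\up,\up^{-1}]$, with Lusztig's extremal vector $T''_{w_0}\eta_\lambda$ in the Weyl module $V(\lambda)_\sZ$, computed along the reduced expression $w_0=s_1(s_2s_1)\cdots(s_{m-1}\cdots s_1)$ dictated by the block structure.  The key observation would be that at each stage $j$ the intermediate vector $v_j$ is a $\mathfrak{sl}_2$-highest weight vector for the next simple root, i.e. $E_{i_{j+1}}v_j=0$ \emph{in} $V(\lambda)_F$, for the following weight-boundary reason: $v_j$ has weight $w_j\lambda$ (a permutation of $\lambda$ lying on the boundary of the Weyl polytope), so $E_{i_{j+1}}v_j$ has weight $w_j\lambda+\alpha_{i_{j+1}}$, whose sorted version strictly breaks the partial-sum dominance by $\lambda$ (a partial sum exceeds the corresponding partial sum of $\lambda$ by $1$), hence lies outside the Weyl polytope of $\lambda$ and is not a weight of $V(\lambda)_F$.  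Lusztig's elementary formula $T''_i\eta=(\pm\up_i)^{?}F_i^{(\langle\wt\eta,\alpha_i^\vee\rangle)}\eta$ therefore applies at each stage, and iteration identifies our formula with $T''_{w_0}\eta_\lambda$ up to a unit of $\sZ$.  Since $T''_{w_0}$ preserves the Lusztig $\sZ$-form \cite[Ch.~41]{Lubk} and acts as an $\sZ$-linear bijection between weight spaces, $T''_{w_0}\eta_\lambda$ freely generates the rank-one $\sZ$-module $V(\lambda)_{\lambda^\dagger}^\sZ$.  Specialising $\up\mapsto q$ then produces a nonzero element of the one-dimensional weight space $(V(\lambda)_F)_{\lambda^\dagger}$, and the surjection $V(\lambda)_F\twoheadrightarrow L(\lambda)$ restricts to an isomorphism onto $L(\lambda)_{\lambda^\dagger}$ (also one-dimensional, by $W$-invariance of $\mathrm{ch}\,L(\lambda)$ together with $\dim L(\lambda)_\lambda=1$), so the image of the formula in $L(\lambda)$ remains nonzero.

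\emph{Main obstacle.}  The principal difficulty is technical rather than conceptual: transferring Lusztig's braid group action from the generic quantum group $U_\up(\mathfrak{gl}_m)$ over $\mathbb Q(\up)$ to the quantum hyperalgebra $U_{q,F}(m)$ at a root of unity $q$, and verifying that the operators $T''_i$ preserve the Lusztig $\sZ$-form of the Weyl module $V(\lambda)_\sZ$.  The generic statements needed are in \cite[Ch.~5 and Ch.~39--41]{Lubk}, and a careful adaptation to the hyperalgebra setting is precisely what the author alludes to in the opening of the appendix.  A secondary combinatorial bookkeeping task is checking the boundary argument $E_{i_{j+1}}v_j=0$ uniformly for every $j$ along the reduced expression; this is straightforward because each application of $E_{i_{j+1}}$ lifts a coordinate of $w_j\lambda$ that already equals a current ``leading'' entry of the sorted sequence, immediately violating the dominance chain with $\lambda$.
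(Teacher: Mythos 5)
Your route is genuinely different from the paper's, and it is essentially the route the authors explicitly acknowledge and decline in the first paragraph of the appendix: they cite \cite[Ch.~5, Lem.~39.1.2]{Lubk} and say the result ``should'' follow from the braid-group symmetries on Weyl modules, but that one would first have to transfer these symmetries to the hyperalgebra $U_{q,F}(m)$ at a root of unity --- which is exactly the obstacle you identify. The paper then gives a self-contained direct proof instead. Concretely, the paper sets $\mathfrak{n}_\lambda^{(k)}$ to be the $k$-block partial product and proves by induction the triple claim (i) $\mathfrak{n}_\lambda^{(k)}\neq 0$, (ii) $\wt(\mathfrak{n}_\lambda^{(k)})=(\lambda_{k+1},\ldots,\lambda_m,\lambda_k,\ldots,\lambda_1)$, (iii) $E_i^{(M)}\mathfrak{n}_\lambda^{(k)}=0$ for $i<m-k$. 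Nonvanishing is established by an explicit ``undoing'' computation, applying $E_1^{(\lambda_k-\lambda_{k+1})}\cdots E_{m-k}^{(\lambda_k-\lambda_m)}$ to $\mathfrak{n}_\lambda^{(k)}$ and showing via Propositions \ref{cqrv}(1),(4) that one recovers $\mathfrak{n}_\lambda^{(k-1)}\neq 0$; the partial-maximality (iii) is shown by pushing $E_i^{(M)}$ through the $F$-block with Propositions \ref{ppco}(3), \ref{cqrv}(4), and \cite[Prop.~6.25]{ddp}. All of this takes place directly in $L(\lambda)$, so no appeal to $W$-invariance of $\mathrm{ch}\,L(\lambda)$ or to the $\sZ$-form of $V(\lambda)$ is needed. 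Your approach is cleaner conceptually and would generalize beyond type $A$, but it has the gap you flag: the integral braid-group operators $T''_i$, their $\sZ$-form stability, and the compatibility with specialization to $U_{q,F}$ and with the quotient $V(\lambda)_F\twoheadrightarrow L(\lambda)$ are not developed in this paper, and setting them up would cost at least as much as the direct argument. A secondary caution on your ``boundary argument'': you verify $E_{i_{j+1}}v_j=0$ for $M=1$, but the inductive identification with $T''_{w_0}\eta_\lambda$ and the later conclusion that $v_j$ is a highest weight vector for the relevant $\mathfrak{sl}_2$-hyperalgebra requires $E_{i_{j+1}}^{(M)}v_j=0$ for all $M\geq 1$; this does follow from the same dominance argument (since $w_j\lambda+M\alpha_{i_{j+1}}$ also fails dominance), but it should be stated. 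Modulo the braid-group infrastructure, your plan is correct; the paper's proof is the elementary alternative that avoids building it.
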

\begin{proof}Define recursively
$$
\mathfrak{n}_\lambda^{(k)} =\begin{cases} \mathfrak{m}_\lambda,&\text{ if } k=0;\\
F_{m-k}^{(\lambda_{k}-\lambda_m)}\cdots
F_{2}^{(\lambda_{k}-\lambda_{k+2})}
F_{1}^{(\lambda_{k}-\lambda_{k+1})}
\mathfrak{n}_\lambda^{(k-1)},&\text{ if }1\leq k\leq m-1.\end{cases}
$$
We first claim that, for all $0\leq k\leq m-1$
\begin{equation}\label{maxa2}
\begin{aligned}
(1)&\quad\mathfrak{n}_\lambda^{(k)}\neq0;\\
(2)&\quad\wt({\mathfrak{n}_\lambda^{(k)}})=(\lambda_{k+1},\lambda_{k+2},\cdots,\lambda_m,
\lambda_{k},\lambda_{k-1},\cdots,\lambda_1),\\
(3)&\quad E_{i}^{(M)}\, \mathfrak{n}_\lambda^{(k)}=0 , \mbox{ for } i<m-k,  M>0.
\end{aligned}
\end{equation}
Indeed, it is obvious if $k=0$. Assume now $k>0$ and that (1)--(3) hold for $k-1.$
Then ${\mathfrak{n}_\lambda^{(k-1)}}\neq0$ and, by Proposition \ref{cqrv}(4)(1),
$$\aligned
E_{1}^{(\lambda_{k}-\lambda_{k+1})}&\cdots
E_{m-k-1}^{(\lambda_{k}-\lambda_{m-1})}
E_{m-k}^{(\lambda_{k}-\lambda_m)}
{\mathfrak{n}_\lambda^{(k)}}\\
&=E_{1}^{(\lambda_{k}-\lambda_{k+1})}\cdots
E_{m-k-1}^{(\lambda_{k}-\lambda_{m-1})}
E_{m-k}^{(\lambda_{k}-\lambda_m)}F_{m-k}^{(\lambda_{k}-\lambda_m)}
F_{m-k-1}^{(\lambda_{k}-\lambda_{m-1})}\cdots
F_{1}^{(\lambda_{k}-\lambda_{k+1})}
{\mathfrak{n}_\lambda^{(k-1)}}\\
&=E_{1}^{(\lambda_{k}-\lambda_{k+1})}\cdots
E_{m-k-1}^{(\lambda_{k}-\lambda_{m-1})}
 \begin{bmatrix} \tilde K_{m-k}; 0 \\\lambda_{k}-\lambda_m  \end{bmatrix}
F_{m-k-1}^{(\lambda_{k}-\lambda_{m-1})}\cdots
F_{1}^{(\lambda_{k}-\lambda_{k+1})}
{\mathfrak{n}_\lambda^{(k-1)}}.\\
\endaligned$$
If $\mu=\wt(F_{m-k-1}^{(\lambda_{k}-\lambda_{m-1})}\cdots
F_{1}^{(\lambda_{k}-\lambda_{k+1})}
{\mathfrak{n}_\lambda^{(k-1)}})$, then $\mu_{m-k}=\la_{m-1}+(k-\la_{m-1})=k$ and $\mu_{m-k+1}=\la_m$.
Thus, by \eqref{wt space} and induction, we have
$$\aligned
E_{1}^{(\lambda_{k}-\lambda_{k+1})}&\cdots
E_{m-k-1}^{(\lambda_{k}-\lambda_{m-1})}
E_{m-k}^{(\lambda_{k}-\lambda_m)}
{\mathfrak{n}_\lambda^{(k)}}\\
&=E_{1}^{(\lambda_{k}-\lambda_{k+1})}\cdots
E_{m-k-1}^{(\lambda_{k}-\lambda_{m-1})}
F_{m-k-1}^{(\lambda_{k}-\lambda_{m-1})}\cdots
F_{1}^{(\lambda_{k}-\lambda_{k+1})}
{\mathfrak{n}_\lambda^{(k-1)}}={\mathfrak{n}_\lambda^{(k-1)}}\neq0,
\endaligned$$
proving ${\mathfrak{n}_\lambda^{(k)}}\neq0$.
Also, by induction
\begin{equation}\notag
\begin{aligned}
&\wt({\mathfrak{n}_\lambda^{(k)}})=
\wt(F_{m-k}^{(\lambda_{k}-\lambda_m)}\cdots
F_{2}^{(\lambda_{k}-\lambda_{k+2})}
F_{1}^{(\lambda_{k}-\lambda_{k+1})}
\mathfrak{n}_\lambda^{(k-1)})\\
&=(\lambda_{k},\lambda_{k+1},\cdots,\lambda_m,
\lambda_{k-1},\lambda_{k-2},\cdots,\lambda_1)-(\lambda_{k}-\lambda_{k+1})(\epsilon_{1}-\epsilon_{2})\\
&\quad\,
  -(\lambda_{k}-\lambda_{k+2})(\epsilon_{2}-\epsilon_{3})-\cdots-(\lambda_{k}-\lambda_{m})(\epsilon_{m-k}-\epsilon_{m-k+1})\\
 & =(\lambda_{k+1},\lambda_{k+2},\cdots,\lambda_m,
\lambda_{k},\lambda_{k-1},\cdots,\lambda_1),
\end{aligned}
\end{equation}
proving (2). It remains to prove (3). 

For $i<m-k$ and $M>0$,  let $s=\min(M,(\lambda_{k}-\lambda_{k+i}))$. By Proposition \ref{cqrv}(1)(4),
\begin{equation}\label{xxx}
\begin{aligned}
&\quad\,E_{i}^{(M)}\, \mathfrak{n}_\lambda^{(k)}
=E_{i}^{(M)} (F_{m-k}^{(\lambda_{k}-\lambda_m)}\cdots
F_{2}^{(\lambda_{k}-\lambda_{k+2})}
F_{1}^{(\lambda_{k}-\lambda_{k+1})}
\mathfrak{n}_\lambda^{(k-1)})\\
&=       F_{m-k}^{(\lambda_{k}-\lambda_m)}\cdots
F_{i+1}^{(\lambda_{k}-\lambda_{k+i+1})}
(E_{i,}^{(M)}\cdot
F_{i}^{(\lambda_{k}-\lambda_{k+i})})\cdots
F_{2}^{(\lambda_{k}-\lambda_{k+2})}
F_{1}^{(\lambda_{k}-\lambda_{k+1})}
\mathfrak{n}_\lambda^{(k-1)}  \\
&=F_{m-k}^{(\lambda_{k}-\lambda_m)}\cdots F_{i+1}^{(\lambda_{k}-\lambda_{k+i+1})}
\sum_{t=0}^{s}
       F_{i}^{(\lambda_{k}-\lambda_{k+i}-t)}
       \begin{bmatrix} K_{i,i+1}; 2t-M-(\lambda_{k}-\lambda_{k+i}) \\ t \end{bmatrix} E_{i}^{(M-t)}\\
 &\quad \cdot F_{i-1}^{(\lambda_{k}-\lambda_{k+i-1})}    \cdots
F_{2}^{(\lambda_{k}-\lambda_{k+2})}
F_{1}^{(\lambda_{k}-\lambda_{k+1})}
\mathfrak{n}_\lambda^{(k-1)}.\\
\end{aligned}
\end{equation}
 If $M>\lambda_{k}-\lambda_{k+i}$, then $M-t>0$
for all $0\leq t\leq s=\lambda_{k}-\lambda_{k+i}$ and, by induction,
\begin{equation}\notag
\begin{aligned}
E_{i}^{(M)}\, \mathfrak{n}_\lambda^{(k)}&=\cdots F_{i+1}^{(\lambda_{k}-\lambda_{k+i+1})}
\sum_{t=0}^{s}
       F_{i}^{(\lambda_{k}-\lambda_{k+i}-t)}
       \begin{bmatrix} K_{i,i+1}; 2t-M-(\lambda_{k}-\lambda_{k+i}) \\ t \end{bmatrix} \\
 &\quad \cdot F_{i-1}^{(\lambda_{k}-\lambda_{k+i-1})}     \cdots
F_{2}^{(\lambda_{k}-\lambda_{k+2})}
F_{1}^{(\lambda_{k}-\lambda_{k+1})}
(E_{i}^{(M-t)}
\mathfrak{n}_\lambda^{(k-1)})=0.
\end{aligned}
\end{equation}
If $M\leq (\lambda_{k}-\lambda_{k+i})$, then $s=M$ and, with a similar argument, \eqref{xxx} becomes
\begin{equation}\notag
\begin{aligned}
&\quad\;E_{i}^{(M)}\, \mathfrak{n}_\lambda^{(k)}=F_{m-k}^{(\lambda_{k}-\lambda_m)}\cdots F_{i+1}^{(\lambda_{k}-\lambda_{k+i+1})}
       F_{i}^{(\lambda_{k}-\lambda_{k+i}-M)}
       \begin{bmatrix} K_{i,i+1}; M-(\lambda_{k}-\lambda_{k+i}) \\ M \end{bmatrix}\\
       &\qquad\qquad\qquad\;\cdot F_{i-1}^{(\lambda_{k}-\lambda_{k+i-1})}\cdots
F_{2}^{(\lambda_{k}-\lambda_{k+2})}
F_{1}^{(\lambda_{k}-\lambda_{k+1})}
\mathfrak{n}_\lambda^{(k-1)}\\
&=F_{m-k}^{(\lambda_{k}-\lambda_m)}\!\cdots (F_{i+1}^{(\lambda_{k}-\lambda_{k+i+1})}
       F_{i}^{(\lambda_{k}-\lambda_{k+i}-M)}) \cdot F_{i-1}^{(\lambda_{k}-\lambda_{k+i-1})}\cdots
F_{2}^{(\lambda_{k}-\lambda_{k+2})}
F_{1}^{(\lambda_{k}-\lambda_{k+1})}
\mathfrak{n}_\lambda^{(k-1)}.\\
\end{aligned}
\end{equation}
By applying $\Upsilon$ to Proposition \ref{ppco}(3), we obtain
$$F_{i+1}^{(\lambda_{k}-\lambda_{k+i+1})}
       F_{i}^{(\lambda_{k}-\lambda_{k+i}-M)}=\sum_{t=0}^{s'}q^{a_tb_t}
F_{i}^{(\lambda_{k}-\lambda_{k+i}-M-t)}
E_{i+2,i}^{(t)}F_{i+1}^{(\lambda_{k}-\lambda_{k+i+1}-t)},$$
where $s'=\min((\lambda_{k}-\lambda_{k+i+1}),(\lambda_{k}-\lambda_{k+i}-M))$, $a_t=\lambda_{k}-\lambda_{k+i}-M-t,b_t=
\lambda_{k}-\lambda_{k+i+1}-t$. Substituting gives
$$\aligned
E_{i}^{(M)}\, \mathfrak{n}_\lambda^{(k)}&=\sum_{t=0}^{s'}q^{a_tb_t}
\big(F_{m-k}^{(\lambda_{k}-\lambda_m)}\cdots
F_{i}^{(\lambda_{k}-\lambda_{k+i}-M-t)}
E_{i+2,i}^{(t)}\\
& \quad  \cdot F_{i-1}^{(\lambda_{k}-\lambda_{k+i-1})}    \cdots
F_{2}^{(\lambda_{k}-\lambda_{k+2})}
F_{1}^{(\lambda_{k}-\lambda_{k+1})}
F_{i+1}^{(\lambda_{k}-\lambda_{k+i+1}-t)}
\mathfrak{n}_\lambda^{(k-1)}\big)\\
&=0.
\endaligned$$
Here the last equation follows from induction and \cite[Prop. 6.25]{ddp}. Indeed, by restricting $L(\lambda)$ to the subalgebra $U_{q,F}({m-k+1})$ of $U_{q,F}({m})$ and induction, $\mathfrak{m}_\mu=\mathfrak{n}_\lambda^{(k-1)}$ is a maximal vector of weight $\mu=(\lambda_k,\cdots,\lambda_m)$ with $\mu_i=\la_{k+i-1}$. Since
$$
\lambda_{k}-\lambda_{k+i+1}-t\geq \lambda_{k}-\lambda_{k+i+1}-(\lambda_{k}-\lambda_{k+i}-M)
>\lambda_{k+i}-\lambda_{k+i+1},$$
\cite[Prop. 6.25]{ddp} implies $
F_{i+1}^{(N)}\, \mathfrak{m}_\mu=0 
$ for  all $N>\mu_{i+1}-\mu_{i+2}$.
This completes the proof of (3).

Finally, by the claim, $\lambda^\dagger=\wt(\mathfrak{n}_\lambda^{(m-1)})$ is a weight of $L(\la)$. Since 
$\lambda^\dagger$ is the lowest weight of the Weyl module $V(\lambda)$ and
$L(\lambda)$ is a quotient of $V(\lambda)$, we conclude that
$\lambda^\dagger$ is the lowest weight of $L(\lambda).$
\end{proof}
\end{appendix}

%
%
%

%
%

\makeatletter
\def\@biblabel#1{#1.}
\makeatother

\end{document}